\def\paragraph{\subsection}
\def\@sect#1#2#3#4#5#6[#7]#8{%
  \edef\@toclevel{\ifnum#2=\@m 0\else\number#2\fi}%
  \ifnum #2>\c@secnumdepth \let\@secnumber\@empty
  \else \@xp\let\@xp\@secnumber\csname the#1\endcsname\fi
  \@tempskipa #5\relax
  \ifnum #2>\c@secnumdepth
    \let\@svsec\@empty
  \else
    \refstepcounter{#1}%
    \edef\@secnumpunct{%
      \ifdim\@tempskipa>\z@ 
        \@ifnotempty{#8}{.\@nx\enspace}%
      \else
        \@ifempty{#8}{.}{.\@nx\enspace}%
      \fi
    }%
      \ifnum #2=\tw@ \def\@secnumfont{\bfseries}\fi{}%
    \protected@edef\@svsec{%
      \ifnum#2<\@m
        \@ifundefined{#1name}{}{%
          \ignorespaces\csname #1name\endcsname\space
        }%
      \fi
      \@seccntformat{#1}%
    }%
  \fi
  \ifdim \@tempskipa>\z@ 
    \begingroup #6\relax
    \@hangfrom{\hskip #3\relax\@svsec}{\interlinepenalty\@M #8\par}%
    \endgroup
    \ifnum#2>\@m \else \@tocwrite{#1}{#8}\fi
  \else
  \def\@svsechd{#6\hskip #3\@svsec
    \@ifnotempty{#8}{\ignorespaces#8\unskip
       \@addpunct.}%
    \ifnum#2>\@m \else \@tocwrite{#1}{#8}\fi
  }%
  \fi
  \global\@nobreaktrue
  \@xsect{#5}}
\newcommand{\hatzero}{\hat{0}}
\newcommand{\longequiv}{\Longleftrightarrow}
\newcommand{\mbbN}{\mathbb{N}}
\newcommand{\mbbO}{\mathbb{O}}
\newcommand{\mbbP}{\mathbb{P}}
\newcommand{\mbbQ}{\mathbb{Q}}
\newcommand{\mbbSY}{\mathbb{SY}}
\newcommand{\mbbY}{\mathbb{Y}}
\renewcommand{\color}{\mathnormal{color}}
\newcommand{\nnull}{\mathnormal{null}}
\newcommand{\horizline}[2]{
    \draw ($(\tikzcdmatrixname-#1-#2.south)!0.5!(\tikzcdmatrixname-\the\numexpr #1+1\relax-#2.north)$)
    coordinate (y)
    ($(\tikzcdmatrixname-#1-\the\numexpr #2-1\relax.east)!0.5!(\tikzcdmatrixname-#1-#2.west)$)
    coordinate (xl)
    ($(\tikzcdmatrixname-#1-#2.east)!0.5!(\tikzcdmatrixname-#1-\the\numexpr #2+1\relax.west)$)
    coordinate (xu)
    (xl|-y) -- (xu|-y);
}
\newcommand{\horizlinex}[3]{
    \draw ($(\tikzcdmatrixname-#1-#2.south)!0.5!(\tikzcdmatrixname-\the\numexpr #1+1\relax-#2.north)$)
    coordinate (y)
    ($(\tikzcdmatrixname-#1-\the\numexpr #2-1\relax.east)!0.5!(\tikzcdmatrixname-#1-#2.west)$)
    coordinate (xl)
    ($(\tikzcdmatrixname-#1-#3.east)!0.5!(\tikzcdmatrixname-#1-\the\numexpr #3+1\relax.west)$)
    coordinate (xu)
    (xl|-y) -- (xu|-y);
}
\newcommand{\vertline}[2]{
    \draw ($(\tikzcdmatrixname-#1-#2.east)!0.5!(\tikzcdmatrixname-#1-\the\numexpr #2+1\relax.west)$)
    coordinate (x)
    ($(\tikzcdmatrixname-\the\numexpr #1-1\relax-#2.south)!0.5!(\tikzcdmatrixname-#1-#2.north)$)
    coordinate (yl)
    ($(\tikzcdmatrixname-#1-#2.south)!0.5!(\tikzcdmatrixname-\the\numexpr #1+1\relax-#2.north)$)
    coordinate (yu)
    (x|-yl) -- (x|-yu);
}
\newcommand{\vertlinex}[3]{
    \draw ($(\tikzcdmatrixname-#1-#3.east)!0.5!(\tikzcdmatrixname-#1-\the\numexpr #3+1\relax.west)$)
    coordinate (x)
    ($(\tikzcdmatrixname-\the\numexpr #1-1\relax-#3.south)!0.5!(\tikzcdmatrixname-#1-#3.north)$)
    coordinate (yl)
    ($(\tikzcdmatrixname-#2-#3.south)!0.5!(\tikzcdmatrixname-\the\numexpr #2+1\relax-#3.north)$)
    coordinate (yu)
    (x|-yl) -- (x|-yu);
}
\newcommand{\growthtableauxh}[3]{\hbox to \linewidth{\hfill\vtop{\vspace{0pt}\hbox{#1}}\hfill
\vtop{\vspace{\baselineskip}\hbox{#2}\vskip 1in\hbox{#3}}\hfill}}
\newcommand{\growthtableauxv}[3]{#1\vskip \baselineskip\hbox to \linewidth
{\hfill\hbox{#2}\hfill\hbox{#3}\hfill}}
\newtheorem{theorem}{Theorem}[section]
\newtheorem{proposition}[theorem]{Proposition}
\begin{document}

\title[Graphical representation of insertion algorithms]{On the
  combinatorics of tableaux ---
  Graphical representation of insertion algorithms}
\author{Dale R. Worley}
\email{worley@alum.mit.edu}
\thanks{The author would like to credit Linux, Emacs, \TeX, \LaTeX,
  AMS-\LaTeX, Ti\textit{k}Z, \texttt{tikzcd}, and \texttt{ytableau}
  for typesetting this paper.}
\thanks{The author would like to thank arXiv, Sci-Hub, Internet
  Archive, and Google Scholar for providing a world-class research
  library.
  Calvin Mooers predicted this situation in \cite{Mooers1959}:
  \begin{quotation}
    In my discussion I shall bypass treating such useful and imminent
    tasks as the use of machines to store, transfer, and emit texts, so
    that at the time that you need to refer to a paper, even in an obscure
    journal, you can have a copy in hand within, say, twenty-four hours.
    [\ldots]  Neither shall I consider the application of machines to the
    integration of national and international library systems so that at
    any first-rate library, you will have at your command the catalogs of
    the major collections of the world.  These are all coming---but it
    should be noted with respect to them that the problems of human
    cooperation ranging from person-to-person to nation-to-nation
    cooperation are more serious than some of the machine and technical
    problems involved.
  \end{quotation}
 \hskip 0pt
}
\date{Feb 17, 2025} 

\begin{abstract}
Many algorithms for inserting elements into tableaux are known,
starting with the Robinson-Schensted algorithm.
Much of those
processes can be incorporated into the general framework of Fomin's
``growth diagrams''.
Even for single types of tableaux, there are various alternative
insertion algorithms and, due to the varying ways they are described,
the relationships
between the algorithms can be obscure.
The distinguishing features of many algorithms can be codified into
graphic ``insertion diagrams'' which make
important aspects of the algorithms immediately apparent.
We use insertion diagrams to build a graphic catalog or picture book
of many of the tableau insertion algorithms in the literature.
\end{abstract}

\maketitle

\tableofcontents
\listoffigures

\section{Introduction}

Many algorithms are known for inserting elements into Young tableaux,
starting with the Robinson-Schensted algorithm \cite{Schen1961}.
The algorithm has been extended in various ways.  The most significant
extension is Knuth's version, which
allows multiple equal elements to be inserted into a tableau.  However
we will not be pursuing that extension in this paper.

The type of tableau involved can be changed to shifted Young tableaux
(which we will examine in detail),
or the more complex ``oscillating tableaux'' (which we will not examine).
These algorithms can
can be incorporated into the general framework of ``growth diagrams''
\ref{sec:growth-diagrams}, created by Fomin \cites{Fom1994a,Fom1995a}.

In all of these situations, multiple algorithms can be constructed
which serve the same or very similar enumerative purposes, and often
no one algorithm seems to be ``natural''.
Due to their complexity and the varying ways they are
described, the relationships between the algorithms can be obscure.
For many algorithms, the distinguishing features can be codified into
graphic ``insertion diagrams'' \ref{sec:insert-diagram} which make
important aspects of the algorithms immediately apparent.
We use insertion diagrams to build a graphic catalog or picture book
of many of the tableau insertion algorithms in the literature.

The concept of a ``generic'' insertion algorithm, an algorithm that by the
choice of parameters (such as our insertion diagrams) generates a
family of insertion algorithms, was
pioneered by \cite{McLar1986},\footnote{\cite{GarMcLar1987} provides
most of the parts of \cite{McLar1986} dealing with unshifted tableaux, and is
more easily available.} a fact I discovered after this paper
was well underway.\footnote{\cite{McLar1986} also anticipates parts of
\cite{Fom1995a}.}

The power of insertion diagrams as a method of describing insertion
algorithms is shown by two algorithms we introduce here, ``jitter''
insertion~\ref{sec:jitter-insertion} and double-circle
insertion~\ref{sec:double-circle-insertion}, which we define not because they
are valuable, but because insertion diagrams make easy defining algorithms
with specific properties.

We introduce the extension of ``biweighting''~\ref{sec:biweighted},
factoring the weight
function that is applied to the edges of the ``descending'' graph in Fomin's dual
graded graph construction~\ref{sec:weight} into two weight functions,
one applied to the edges of the ``ascending'' graph and one to the
edges of the ``descending''
graph.  This is a small change, but it allows insertion algorithms that
apply ``circling'' to the $P$ tableau to be described with growth
processes.

We include a discussion of McLarnan's underappreciated ``shifted
column insertion'' algorithm.~\ref{sec:shifted-column}
It is biweighted and nearly inversion self-dual.

\section{Background}

The machinery underlying tableau insertion algorithms is defined in
many works (e.g.~\cites{Fom1994a,Fom1995a}, \cite{Haim1989a},
\cite{Roby1991a}).  This exposition will specialize the machinery for
the situations we will be considering; applying known generalizations
to this exposition is in most cases straightforward.  A whirlwind
summary follows.

\paragraph{Basics}

We use $\mbbP$ for $\{1, 2, 3, \ldots\}$ and $\mbbN$ for
$\{0, 1, 2, \ldots\}$.
We use $[n]$ for $\{1, 2, 3, \ldots, n\}$.

We use $\uplus$ to denote generally joining two structures that are assumed to
be disjoint.  One use is the union of two disjoint sets.  Other uses
are for the appropriate ``direct sum'' of two structures of the same
type.  In some cases, we abuse the notation by implicitly labeling the
elements of the two structures with 1 or 2 (resp.)\ to make the
structures disjoint before taking their direct sum.

If a set $Y$ is the set $X$ plus one additional element, we use
$Y - X$ to denote that element.  Hence $(X \uplus \{y\}) - X = y$.

In a poset, we use $p \gtrdot q$ to mean that $p$ covers $q$, that is,
$p > q$ and there is no $r$ such that $p > r > q$.  We use $p \lessdot q$
to mean that $p$ is covered by $q$, that is, $q \gtrdot p$.

A poset $P$ is \textit{finitary} if every principal order ideal of $P$
is finite.\cite{Stan2012a}*{sec.~3.4}
A poset $P$ has \textit{finite covers} if for every $p \in P$,
$\{q \in P: p \gtrdot q\}$ and $\{q \in P: p \lessdot q\}$ are both
finite.  A finitary poset has finite covers downward but may not have
finite covers upward.

A finitary poset is locally finite.  Intuitively, a finitary poset is
``finite downward'' but may be ``infinite upward''.  Intuitively, a poset with
finite covers is ``finite sideways''.

\begin{proposition}
  If a poset $P$ has finite covers, the following are equivalent:
  \begin{enumerate}
    \item $P$ is finitary.
    \item $P$ has no infinite descending chain (there does not exist
	$p_1, p_2, p_3, ... \in P$ such that
	$p_1 > p_2 > p_3 > ...$), and $P$ has no
      bounded infinite ascending chain (there does not exist $p_1,
      p_2, p_3, ..., q \in P$ such that
	$p_1 < p_2 < p_3 < ... < q$).
  \end{enumerate}
\end{proposition}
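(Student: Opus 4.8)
The plan is to prove the two implications separately, with $(1)\Rightarrow(2)$ routine and $(2)\Rightarrow(1)$ carrying all the content. For $(1)\Rightarrow(2)$ I would assume $P$ finitary and rule out each forbidden chain by observing that it lives inside a single principal order ideal. An infinite descending chain $p_1 > p_2 > \cdots$ has all its terms in the principal order ideal of $p_1$, forcing that ideal to be infinite; a bounded infinite ascending chain $p_1 < p_2 < \cdots < q$ has all its terms in the principal order ideal of $q$. Both contradict finitariness, so neither can occur. (Note that finite covers is not needed for this direction.)

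For $(2)\Rightarrow(1)$ I would argue by contradiction: suppose some principal order ideal $I = \{r : r \le p\}$ is infinite, and manufacture one of the two forbidden chains. The device is K\"onig's lemma applied to the tree $T$ whose vertices are the finite saturated descending chains $p = r_0 \gtrdot r_1 \gtrdot \cdots \gtrdot r_k$ starting at $p$, a child being formed by appending one further covered element. Since $P$ has finite covers, each vertex of $T$ has finitely many children, so $T$ is finitely branching. The whole argument hinges on a reachability lemma that guarantees $T$ has enough vertices.

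The reachability lemma states that every $q \le p$ is the bottom element of some finite saturated descending chain from $p$. To prove it I would first show the local step: whenever $x > q$ there exists $r$ with $x \gtrdot r$ and $r \ge q$. For this, set $S = \{r : q \le r < x\}$, which is nonempty as it contains $q$; any ascending chain in $S$ is bounded above by $x$, so the hypothesis forbidding bounded infinite ascending chains forces $S$ to satisfy the ascending chain condition and hence to possess a maximal element $r$, and maximality of $r$ in $S$ gives $x \gtrdot r$. Iterating this step starting from $x = p$ produces a chain $p \gtrdot r_1 \gtrdot r_2 \gtrdot \cdots$ whose terms all dominate $q$; the hypothesis forbidding infinite descending chains forces the iteration to terminate, and it can only terminate at $q$ itself, yielding the desired saturated chain.

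With reachability in hand, the map sending each vertex of $T$ to its bottom element surjects onto $I$, so the infinitude of $I$ makes $T$ infinite. K\"onig's lemma then supplies an infinite path in $T$, i.e.\ an infinite saturated descending chain in $P$, contradicting the hypothesis of no infinite descending chain and completing the proof. I expect the reachability lemma to be the main obstacle: in a general poset nothing forces two comparable elements to be joined by a chain of covers, and it is precisely here that both clauses of $(2)$ are consumed (the ascending-chain clause to exhibit a cover just below $x$, the descending-chain clause to make the resulting chain finite). A secondary point worth care is taking the vertices of $T$ to be chains rather than elements, so that $T$ is genuinely a tree and K\"onig's lemma applies without ambiguity.
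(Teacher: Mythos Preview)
Your proof is correct and follows the paper's approach: argue by contradiction, build the tree of finite saturated descending chains from $p$, and apply K\"onig's lemma. Your reachability lemma in fact makes explicit a step the paper glosses over---the paper simply asserts ``$P$ is locally finite'' to justify that every $q\le p$ is the endpoint of some finite saturated descending chain from $p$, but under hypothesis~(2) that claim requires precisely the argument you supply (using the no-bounded-ascending-chain condition to find a cover below $x$ above $q$, and the no-descending-chain condition to terminate the iteration).
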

\begin{proof}

  (1) $\Rightarrow$ (2): Trivial.

  (2) $\Rightarrow$ (1): 
Suppose $P$ is not finitary, and so there is a $p \in P$ whose
principal ideal $I$ is infinite.
Consider the tree $T$ of saturated descending chains from $p$ in $P$.
For any $q \in I$, there is a finite saturated descending chain from
$p$ to $q$, because $P$ is locally finite.
Thus, for any $q \in I$, there is a distinct path in $T$, and
since $I$ is infinite, $T$ is infinite.
Applying the tree version of K\H{o}nig's infinity lemma to this tree,
either $T$ has a vertex of infinite degree or an infinite path.
Since $P$ has finite covers, $T$ has no vertex of infinite
degree.
Thus, $T$ has an infinite path, which is an infinite descending
chain from $p$.
\end{proof}

\paragraph{The poset of points}
For every instantiation of the general theory of tableau insertion algorithms, 
we start with a poset $B$ that is finitary and has finite covers.
The elements of $B$ are called \textit{points}, \textit{boxes}, or
sometimes \textit{squares}; the latter
terms because at a later stage boxes will be mapped to values, which
will be represented graphically by placing values in the boxes.
(We reserve the term ``cell'' for another definition later.)

For the ``archetype'' instantiation of the theory, which is the theory of
the Robinson-Schensted algorithm for unshifted tableaux \cite{Schen1961},
$B$ is the quadrant of the plane,
$\mbbQ = \mbbP^2$, with the order relation
\begin{equation}
(i,j) \le (k,l) \longequiv i \le k \textrm{ and } j \le l \textrm{\quad for all }
	(i,j), (k,l) \in \mbbQ
\end{equation}
We display the quadrant in ``English format'', as if the elements
were the indexes of the elements of a matrix, and sometimes with the boundaries
shown explicitly.  The locations of the points are displayed as in
figure~\ref{fig:quadrant-nos}.
\begin{figure}
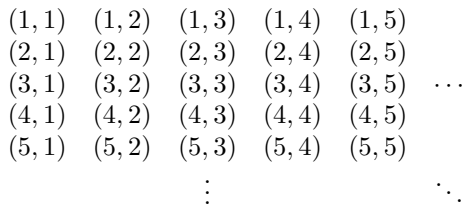

\[
\begin{array}{cccccc}
  (1,1) & (1,2) & (1,3) & (1,4) & (1,5) & \\
  (2,1) & (2,2) & (2,3) & (2,4) & (2,5) & \\
  (3,1) & (3,2) & (3,3) & (3,4) & (3,5) & \cdots \\
  (4,1) & (4,2) & (4,3) & (4,4) & (4,5) & \\
  (5,1) & (5,2) & (5,3) & (5,4) & (5,5) & \\
        &       & \vdots &      &       & \ddots
\end{array}
\]
\caption{The points of the quadrant $\mbbQ = \mbbP^2$}
\label{fig:quadrant-nos}
\end{figure}
Usually, we display the points without giving their coordinates, as in
figure~\ref{fig:quadrant-plain}.
\begin{figure}
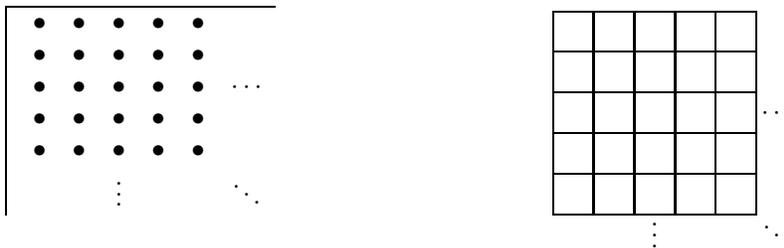

\hbox to \linewidth{\hfill
\raisebox{-2.5em}{$\begin{array}{@{}lcccccc}
\hline
\vline & \bullet & \bullet & \bullet & \bullet & \bullet & \\
\vline & \bullet & \bullet & \bullet & \bullet & \bullet & \\
\vline & \bullet & \bullet & \bullet & \bullet & \bullet & \cdots \\
\vline & \bullet & \bullet & \bullet & \bullet & \bullet & \\
\vline & \bullet & \bullet & \bullet & \bullet & \bullet & \\
\vline &         &         & \vdots  &         &         & \ddots
\end{array}$}
\hfill
\begin{ytableau}
\relax & & & & & \none \\
& & & & & \none \\
& & & & & \none[\cdots] \\
& & & & & \none \\
& & & & & \none \\
\noalign{\vskip 0.2em}
\none & \none & \none[\vdots] & \none & \none & \none[\ddots]
\end{ytableau}
\hfill}
\caption{The points of the quadrant $\mbbQ$ without coordinates}
\label{fig:quadrant-plain}
\end{figure}

\paragraph{The distributive lattice of diagrams}
A finite order ideal (lower set) of $B$ is called a \textit{diagram}
or \textit{shape}.

In the
archetype instantiation, they are also called \textit{Young diagrams} or
\textit{Ferrers diagrams}.  Examples are shown in figure~\ref{fig:young-diagrams}.
\begin{figure}
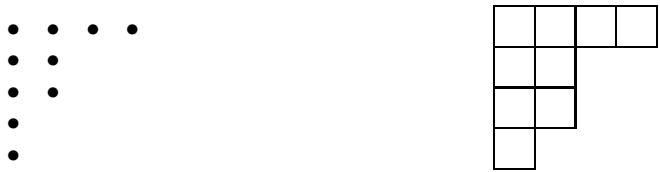

\hbox to \linewidth{\hfill
\raisebox{-2em}{$\begin{array}{cccccc}
\bullet & \bullet & \bullet & \bullet \\
\bullet & \bullet \\
\bullet & \bullet \\
\bullet \\
\bullet
\end{array}$}
\hfill
$\ydiagram{4,2,2,1}$
\hfill}
\caption{Young diagrams}
\label{fig:young-diagrams}
\end{figure}

The set of all finite order ideals of $B$, denoted $J_f(B)$
\cite{Stan2012a}*{sec.~3.4}\cite{Fom1994a}*{Def.~2.1.1}, is a
distributive lattice, and is the set of all diagrams.
$J_f(B)$ is finitary and because it is a lattice, it has
a minimum element, the empty order ideal of $B$
(which we denote by $\hatzero$ or $\emptyset$).
Because $B$ has finite covers, $J_f(B)$ has finite covers.
We set $V = J_f(B)$
since that will be the set of vertices of graphs that we will define
later.
$V$ is naturally graded by $\rho(x) = \lvert x \rvert$.

In the archetype instantiation, $V$ is the lattice of partitions of
integers, called \textit{Young's lattice} $\mbbY$, shown in
figure~\ref{fig:young-lattice}.
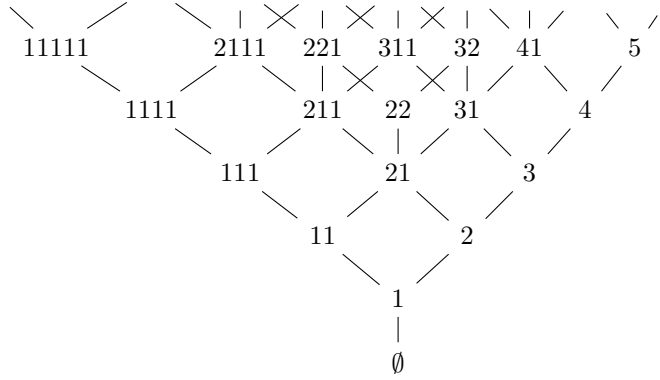
\begin{figure}
\begin{tikzcd}[column sep=0.5em,row sep=1em,every arrow/.append style={dash}]
\null \ar[dr, shorten <=2mm] && \null \ar[dl, shorten <=2mm] \ar[dr, shorten <=2mm] & \null \ar[d, shorten <=1mm] \ar[dr, shorten <=2mm]  & \null \ar[dl, shorten <=2mm] \ar[d, shorten <=1mm] \ar[dr, shorten <=2mm] & \null \ar[dl, shorten <=2mm] \ar[d, shorten <=1mm] \ar[dr, shorten <=2mm] & \null \ar[dl, shorten <=2mm] \ar[d, shorten <=1mm] \ar[dr, shorten <=2mm] & \null \ar[d, shorten <=1mm] & \null \ar[dl, shorten <=2mm] \ar[dr, shorten <=2mm] && \null \ar[dl, shorten <=2mm] \\
& 11111 \ar[dr] && 2111 \ar[dl] \ar[dr] & 221 \ar[d] \ar[dr] & 311 \ar[dl] \ar[dr] & 32 \ar[dl] \ar[d] & 41 \ar[dl] \ar[dr] && 5 \ar[dl] \\
&& 1111 \ar[dr] && 211 \ar[dl] \ar[dr] & 22 \ar[d] & 31 \ar[dl] \ar[dr] && 4 \ar[dl] \\
&&& 111 \ar[dr] && 21 \ar[dl] \ar[dr] && 3 \ar[dl] \\
&&&& 11 \ar[dr] && 2 \ar[dl] \\
&&&&& 1 \ar[d] \\
&&&&& \emptyset
\end{tikzcd}
\caption{Young's lattice $\mbbY$ of partitions}
\label{fig:young-lattice}
\end{figure}

\paragraph{The weight function}
\label{sec:weight}
We assume a \textit{weight} function $w$ from $B$ to
$\mbbP$
and a \textit{differential degree} $r \in \mbbP$.\footnote{The operations
applied to values of $w$ (and $r$) are very
limited and likely the target of $w$ could be extended from
$\mbbP$ to any
algebraic structure containing a homomorphic image of $\mbbP$,
with $r$ also being in the larger structure.  In our combinatorial
interpretation of $w$, this would cause additional complexities, but in
the algebraic approach of e.g.~\cite{Stan1988a}, it should be
straightforward.  See e.g.~the signed weights of \cite{Lam2008} and
Haiman's complex-valued weights described in \cite{Stan1990a}*{Prop.~3.3}.}

In order for our constructions to support the conventional counting
arguments of paths in the ascending and descending graphs, the weight
function must support the algebraic machinery of $U$ and $D$ operators
in \cite{Stan1988a}*{Def.~1.1} and \cite{Fom1994a}*{sec.~2.2}.
This requires
\begin{equation}\label{eq:weight}
\sum_{\substack{y \in V \\ y \lessdot x}} w(x-y) + r =
\sum_{\substack{z \in V \\ x \lessdot z}} w(z-x), \textrm{\quad for all } x \in V
\end{equation}
This is a condition relating the weights of the
elements of $B$ that are the differences between $x$ and the elements
of $V$ it
covers to the weights of the elements of $B$ that are the differences between
$x$ and the elements of $V$ that cover it.
That can be restated as a relationship between the weights of the
elements of $B$ that can be added to $x$ to make a larger order ideal
and the weights of the elements of $B$ that can be removed from $x$ to
make a smaller order ideal.
Because this condition relates several values of $w$ for each order
ideal of $B$, it is very restrictive and the space of allowed weight
functions for a given $B$ is usually small.

In the archetype instantiation, $w(p) = 1$ for all $p$ and $r = 1$, which
are the trivial values for $w$ and $r$.
The Hasse diagram of the quadrant and the Hasse diagram with the points labeled
with their weights are shown in figure~\ref{fig:hasse-quadrant}.
\begin{figure}
\hbox to \linewidth{\hfill
\begin{tikzcd}[column sep=0.5em,row sep=1em,every arrow/.append style={dash}]
\null \ar[dr, shorten <=2mm] && \null \ar[dl, shorten <=2mm] \ar[dr, shorten <=2mm] && \null \ar[dl, shorten <=2mm] \ar[dr, shorten <=2mm] && 
    \null \ar[dl, shorten <=2mm] \ar[dr, shorten <=2mm] &&
    \null \ar[dl, shorten <=2mm] \\
&\bullet \ar[dr] && \bullet \ar[dl] \ar[dr] && \bullet \ar[dl] \ar[dr] && 
    \bullet \ar[dl] \\
&& \bullet \ar[dr] && \bullet \ar[dl] \ar[dr] && \bullet \ar[dl] \\
&&& \bullet \ar[dr] && \bullet \ar[dl] \\
&&&& \bullet
\end{tikzcd}
\hfill
\begin{tikzcd}[column sep=0.5em,row sep=1em,every arrow/.append style={dash}]
\null \ar[dr, shorten <=2mm] && \null \ar[dl, shorten <=2mm] \ar[dr, shorten <=2mm] && \null \ar[dl, shorten <=2mm] \ar[dr, shorten <=2mm] && 
    \null \ar[dl, shorten <=2mm] \ar[dr, shorten <=2mm] &&
    \null \ar[dl, shorten <=2mm] \\
&\bullet_1 \ar[dr] && \bullet_1 \ar[dl] \ar[dr] && \bullet_1 \ar[dl] \ar[dr] && 
    \bullet_1 \ar[dl] \\
&& \bullet_1 \ar[dr] && \bullet_1 \ar[dl] \ar[dr] && \bullet_1 \ar[dl] \\
&&& \bullet_1 \ar[dr] && \bullet_1 \ar[dl] \\
&&&& \bullet_1
\end{tikzcd}
\hfill}
\caption[Hasse diagram of the quadrant $\mbbQ$]%
{(a) Hasse diagram of the quadrant $\mbbQ$, (b) with weights $w(p) = 1$}
\label{fig:hasse-quadrant}
\end{figure}
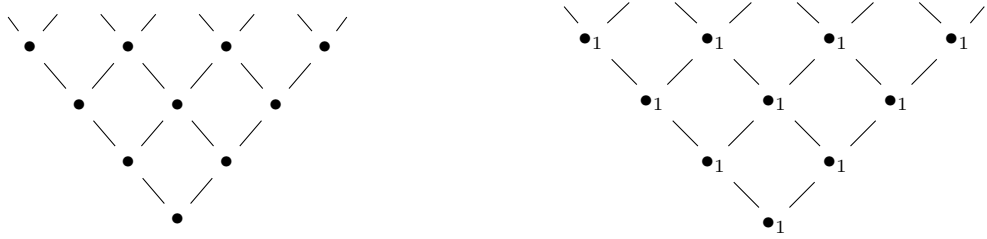

\paragraph{The ascending and descending graphs}
We define a directed graph $G_1$ on the vertex set $V$ by defining
edges for all ascending covering relationships:
\begin{equation}
  x \overset{G_1}{\longrightarrow} y \longequiv x \lessdot y \textrm{\quad for all } x, y \in V
\end{equation}
We define a second directed graph $G_2$ on the vertex set $V$ by
defining edges for all \textit{descending} covering relationships:
\begin{equation}
x \overset{G_2}{\longleftarrow} y \longequiv x \lessdot y \textrm{\quad for all } x, y \in V
\end{equation}
We conceptualize $m = w(y-x)$  as the \textit{multiplicity} of
edge $x \overset{G_2}{\longleftarrow} y$,
labeling each of the multiple edges with a ``color'' taken from
a set of $m$ elements, often $[m]$.
The color of an edge $e$ is denoted $\color(e)$.

Note that (at this stage of the exposition) edges of $G_1$ are not
multiple and do not have colors.

We abuse the notation by using $G_1$ and $G_2$ to denote both the
graphs as structures and their sets of edges.

The structure $\mathcal{G} = (V, \rho, G_1, G_2)$ forms a
\textit{weighted dual graded graph}
in the terminology of \cite{Fom1994a}, which we sometimes abbreviate to
\textit{w.d.g.g.}
If $r = 1$, the w.d.g.g.\ is a differential poset \cite{Stan1988a}.
The
particular structure and direction of the edges in $G_1$ and $G_2$
allow detailed enumeration of directed paths in the graph $G_1 \uplus G_2$
(on the vertex set $V$)
under various constraints.  These constraints can be crafted so that
the enumerations provide proof of various enumerative identities,
often via suitable bijections.  See e.g.~\cite{Fom1994a}*{sec.~1.5 and 2.9}.

In the archetype instantiation, $G_1$ and $G_2$ are the graph of covering
relations in the lattice of partitions $\mbbY$, with $G_1$ having the edges
directed toward the larger partition and $G_2$ having the edges
directed toward the smaller partition.
The edges of $G_2$ have multiplicity 1 and
so are just the reverses of the edges of $G_1$.
The two graphs can be drawn as in
figure~\ref{fig:young-graph-weighted}, with each edge being labeled
with its multiplicity in $G_2$.
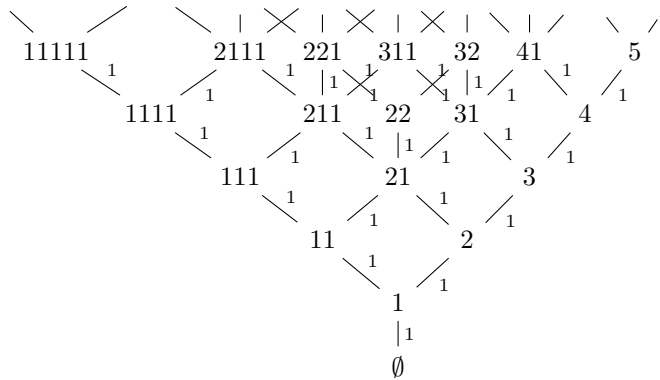
\begin{figure}
\begin{tikzcd}[column sep=0.5em,row sep=1em,every arrow/.append style={dash}]
\null \ar[dr, shorten <=2mm] && \null \ar[dl, shorten <=2mm] \ar[dr, shorten <=2mm] & \null \ar[d, shorten <=1mm] \ar[dr, shorten <=2mm]  & \null \ar[dl, shorten <=2mm] \ar[d, shorten <=1mm] \ar[dr, shorten <=2mm] & \null \ar[dl, shorten <=2mm] \ar[d, shorten <=1mm] \ar[dr, shorten <=2mm] & \null \ar[dl, shorten <=2mm] \ar[d, shorten <=1mm] \ar[dr, shorten <=2mm] & \null \ar[d, shorten <=1mm] & \null \ar[dl, shorten <=2mm] \ar[dr, shorten <=2mm] && \null \ar[dl, shorten <=2mm] \\
& 11111 \ar[dr, "1"] && 2111 \ar[dl, "1"] \ar[dr, "1"] & 221 \ar[d, "1"] \ar[dr, "1"] & 311 \ar[dl, "1"] \ar[dr, "1"] & 32 \ar[dl, "1"] \ar[d, "1"] & 41 \ar[dl, "1"] \ar[dr, "1"] && 5 \ar[dl, "1"] \\
&& 1111 \ar[dr, "1"] && 211 \ar[dl, "1"] \ar[dr, "1"] & 22 \ar[d, "1"] & 31 \ar[dl, "1"] \ar[dr, "1"] && 4 \ar[dl, "1"] \\
&&& 111 \ar[dr, "1"] && 21 \ar[dl, "1"] \ar[dr, "1"] && 3 \ar[dl, "1"] \\
&&&& 11 \ar[dr, "1"] && 2 \ar[dl, "1"] \\
&&&&& 1 \ar[d, "1"] \\
&&&&& \emptyset
\end{tikzcd}
\caption{The Young graph $\mbbY$ with edges labeled with their
  weights/multiplicities}
\label{fig:young-graph-weighted}
\end{figure}

\paragraph{Tableaux}
A mapping of the boxes of a diagram to another set, especially
$\mbbP$, is called a \textit{tableau} (with plural \textit{tableaux}).
If the mapping is a bijection into a set $[n]$, the
tableau is called \textit{standard}.

If a tableau's mapping is into $\mbbP \times \mbbP$ and the
second component of the value of any point $p$ in the diagram
is~$\in [w(p)]$, the tableau is called \textit{colored},
with the first component considered the ``element'' at $p$ and
the second component considered the ``color'' of the element.
If the first component of the values form a bijection into a set
$[n]$, the tableau is called a \textit{standard colored} tableau.

In the archetype instantiation,
tableaux are also called \textit{Young tableaux}, examples of which are shown in
figure~\ref{fig:young-tableaux}.
\begin{figure}
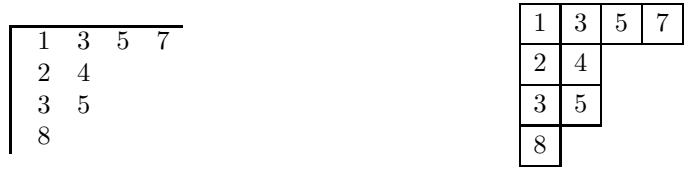

\vbox to \parskip{}
\hbox to \linewidth{\hfill
\raisebox{-2em}{$\begin{array}{@{}lcccccc}
\hline
\vline & 1 & 3 & 5 & 7 \\
\vline & 2 & 4 \\
\vline & 3 & 5 \\
\vline & 8
\end{array}$}
\hfill
\begin{ytableau}
1 & 3 & 5 & 7 \\
2 & 4 \\
3 & 5 \\
8
\end{ytableau}
\hfill}
\caption{Young tableaux}
\label{fig:young-tableaux}
\end{figure}

\paragraph{Growth diagrams}
\label{sec:growth-diagrams}
The next structure we define is a \textit{growth diagram}
or \textit{2-growth} \cite{Fom1995a}*{sec.~3.1}.  A skeleton growth
diagram is shown in figure~\ref{fig:growth}.
\begin{figure}
\begin{tikzcd}[sep=small]
n_{0,4} \ar[rr,"g^1_{1,4}"]\ar[dd,"g^2_{0,4}"] & &
  n_{1,4} \ar[rr,"g^1_{2,4}"]\ar[dd,"g^2_{1,4}"] & &
  n_{2,4} \ar[rr,"g^1_{3,4}"]\ar[dd,"g^2_{2,4}"] & &
  n_{3,4} \ar[rr,"g^1_{4,4}"]\ar[dd,"g^2_{3,4}"] & &
  n_{4,4} \ar[dd,"g^2_{4,4}"] \\
& \alpha_{1,4} & & \alpha_{2,4} & & \alpha_{3,4} & & \alpha_{4,4} \\
n_{0,3} \ar[rr,"g^1_{1,3}"]\ar[dd,"g^2_{0,3}"] & &
  n_{1,3} \ar[rr,"g^1_{2,3}"]\ar[dd,"g^2_{1,3}"] & &
  n_{2,3} \ar[rr,"g^1_{3,3}"]\ar[dd,"g^2_{2,3}"] & &
  n_{3,3} \ar[rr,"g^1_{4,3}"]\ar[dd,"g^2_{3,3}"] & &
  n_{4,3} \ar[dd,"g^2_{4,3}"] \\
& \alpha_{1,3} & & \alpha_{2,3} & & \alpha_{3,3} & & \alpha_{4,3} \\
n_{0,2} \ar[rr,"g^1_{1,2}"]\ar[dd,"g^2_{0,2}"] & &
  n_{1,2} \ar[rr,"g^1_{2,2}"]\ar[dd,"g^2_{1,2}"] & &
  n_{2,2} \ar[rr,"g^1_{3,2}"]\ar[dd,"g^2_{2,2}"] & &
  n_{3,2} \ar[rr,"g^1_{4,2}"]\ar[dd,"g^2_{3,2}"] & &
  n_{4,2} \ar[dd,"g^2_{4,2}"] \\
& \alpha_{1,2} & & \alpha_{2,2} & & \alpha_{3,2} & & \alpha_{4,2} \\
n_{0,1} \ar[rr,"g^1_{1,1}"]\ar[dd,"g^2_{0,1}"] & &
  n_{1,1} \ar[rr,"g^1_{2,1}"]\ar[dd,"g^2_{1,1}"] & &
  n_{2,1} \ar[rr,"g^1_{3,1}"]\ar[dd,"g^2_{2,1}"] & &
  n_{3,1} \ar[rr,"g^1_{4,1}"]\ar[dd,"g^2_{3,1}"] & &
  n_{4,1} \ar[dd,"g^2_{4,1}"] \\
& \alpha_{1,1} & & \alpha_{2,1} & & \alpha_{3,1} & & \alpha_{4,1} \\
n_{0,0} \ar[rr,"g^1_{1,0}"] & &
  n_{1,0} \ar[rr,"g^1_{2,0}"] & &
  n_{2,0} \ar[rr,"g^1_{3,0}"] & &
  n_{3,0} \ar[rr,"g^1_{4,0}"] & &
  n_{4,0}
\end{tikzcd}
\caption{A skeleton growth diagram}
\label{fig:growth}
\end{figure}
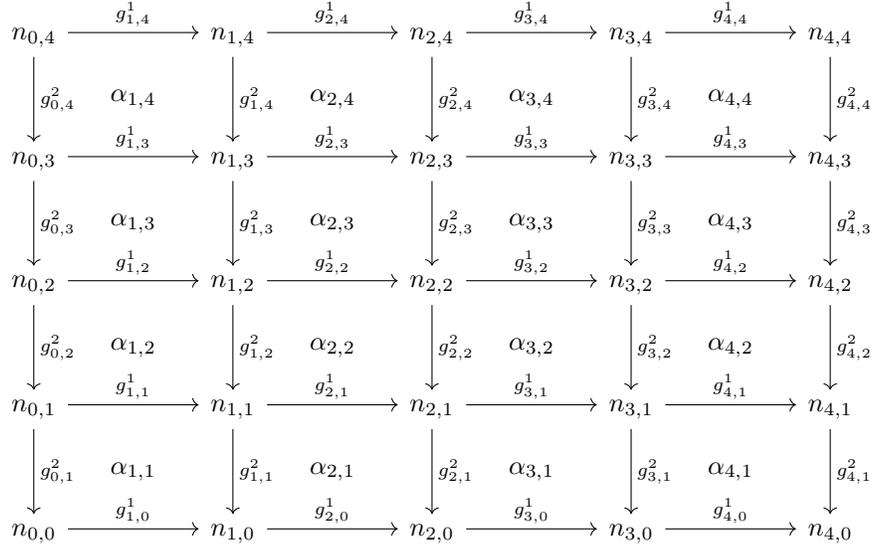

Growth diagrams are displayed using the subscripts as Cartesian
coordinates, so they are shown in ``French format''.
Note that the $g^1_{i,j}$ and $g^2_{i,j}$ have subscripts that match
the higher-numbered node that they connect (to the east or north,
resp.), and the $\alpha_{i,j}$
have subscripts that match the highest-numbered node of the four
surrounding them, the one to the northeast.
Each group of four nodes together with the four edges between them and
the central $\alpha_{i,j}$ is called a ``cell''.  A cell extracted
from its growth diagram is shown in figure~\ref{fig:cell}.
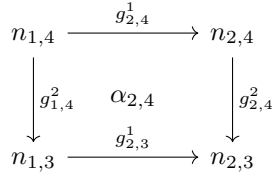
\begin{figure}
\begin{tikzcd}[sep=small]
  n_{1,4} \ar[rr,"g^1_{2,4}"]\ar[dd,"g^2_{1,4}"] & &
  n_{2,4} \ar[dd,"g^2_{2,4}"] \\
& \alpha_{2,4} \\
  n_{1,3} \ar[rr,"g^1_{2,3}"] & &
  n_{2,3}
\end{tikzcd}
\caption[A cell extracted from the growth diagram]%
{A cell extracted from the growth diagram
in figure~\protect\ref{fig:growth}}
\label{fig:cell}
\end{figure}

For a growth diagram of size $n \times m$:
\begin{enumerate}
\item The diagram has a set of \textit{nodes},
  $\{(i,j): i,j \in \mbbN, 0 \le i \le n, 0 \le j \le m\}$.
\item Each node is mapped to $n_{i,j}$, an element of $V$.  In a
  display, the value $n_{i,j}$ will usually be written at the position of the node.
\item Each node with $i = 0$ or $j = 0$ is mapped to $\hatzero \in V$.
\item Each horizontal pair of nodes $(i-1,j)$ and $(i,j)$ is connected
  by an edge $g^1_{i,j}$.  If $n_{i-1,j} = n_{i,j}$, then $g^1_{i,j}$
  is \textit{degenerate}, contains no further information, and is
  displayed as an unlabeled arrow.
  Otherwise, $g^1_{i,j}$ is mapped to an edge in
  $G_1$ from the value $n_{i-1,j}$ to the value $n_{i,j}$.  Since the
  edge is implied by the values $n_{i-1,j}$ and $n_{i,j}$, it is
  usually displayed as an unlabeled arrow.
\item Each vertical pair of nodes $(i,j-1)$ and $(i,j)$ is connected
  by an edge $g^2_{i,j}$.  If $n_{i,j-1} = n_{i,j}$, then $g^2_{i,j}$
  is \textit{degenerate}, contains no further information, and is usually
  displayed as an unlabeled arrow.  Otherwise,
  $g^2_{i,j}$ is mapped to an edge in
  $G_2$ from the value $n_{i,j}$ to the value $n_{i,j-1}$.  Since the
  edge is constrained to be one of a set of multiple edges from 
  $n_{i,j}$ to $n_{i,j-1}$, it is usually displayed as an arrow
  labeled by the color of the edge.  When there is only one color for
  an edge between
  that pair of elements of $V$, the color will often be omitted.
\item Each cell is mapped to a value $\alpha_{i,j}$ in the set
  $\mathcal{A} = \{0, 1, \ldots, r\} = \{0\} \uplus [r]$.
  (This $r$ is the differential degree,
  the $r$ in the constraint equation~\ref{eq:weight} above.)
  Usually, the value 0 is considered ``null'', whereas the values
  in $\mathcal{A}$ that are $> 0$ are considered ``colors''.
  (Note this is a different definition of ``color'' than is applied to the
  edges of $G_2$.)
  When
  $\alpha_{i,j} = 0$, the center of the cell is usually left blank.
  In the common case $r = 1$, the (sole) color 1 is shown as
  ``$X$''.
\end{enumerate}

Note that the arrows for non-degenerate $g^1_{i,j}$ and $g^2_{i,j}$ are shown
pointing between the node values in the direction that the edge and
nodes are related in $G_1$ or $G_2$ (resp.).

In the archetype instantiation, the $G_2$ arrows are unlabeled, and
$r = 1$, so all nonzero $\alpha_{i,j}$ are shown as
``$X$''.

\paragraph{Generalized permutations}
A \textit{generalized permutation} \cite{Roby1991a}*{Def.~2.3.6}
or \textit{diagonal set} \cite{Fom1994a}*{Def.~2.6.1} is an
assignment of values to the $\alpha_{i,j}$ of a growth diagram such
that no two non-zero elements have the same first subscript or the
same second subscript.
(Remember the values $\alpha_{i,j}$ are~$\in \mathcal{A} = \{0\} \uplus [r]$.)
In all growth diagrams we consider, the $\alpha_{i,j}$ form a
generalized permutation.

A generalized permutation can be represented compactly by listing for
each $j \ge 1$ the value $i$ (if any) for which $\alpha_{i,j} \ne 0$
and (if $r > 1$) the value of that $\alpha_{i,j}$.
For example,
the generalized permutation that assigns 1
to $\alpha_{1,1}$, $\alpha_{2,3}$, and $\alpha_{3,2}$; assigns 2
to  $\alpha_{4,5}$;
and 0 to all other $\alpha_{i,j}$ can be represented
$( 1^1, 3^1, 2^1, \hskip 1ex, 4^2 )$.
In terms of an insertion algorithm, this is the insertion of 1, 3, and
then 2 during steps 1, 2, and 3, inserting nothing during step 4, then
inserting 4 during step 5.  In addition, the first three insertions
have color 1 and the last insertion has color 2.

In the archetype instantiation, if the dimensions of the growth diagram are
equal ($n = m$), a generalized permutation containing $n$ non-zero
$\alpha_{i,j}$ is an $n \times n$ permutation matrix.
The compact representation of the generalized permutation can omit the
colors because they are all 1, resulting in the typical one-line
representation of a permutation of $[n]$.

\paragraph{R-correspondence}
Different authors use similar but not identical terminology for the
growth process.  We take our terminology from \cite{Roby1991a} but with
some differences.
In this discussion, we mostly consider relations between components of
a single cell, and in this context,
the conventional labels of the components of a single cell are
shown in figure~\ref{fig:cell-convention} \cite{Fom1995a}*{Lem.~3.7.12 and 4.2.1}.
\begin{figure}
\hbox to \linewidth{\hfill
\begin{tikzcd}[sep=small]
  y \ar[rr,"b_1"]\ar[dd,"a_2"] & & z \ar[dd,"b_2"] \\
& \alpha \\
  t \ar[rr,"a_1"] & & x
\end{tikzcd}
\hfill
$\vcenter{
  \hbox{$\alpha \textit{ is $0$ or} \in [r]$}
  \hbox{$a_1 \textit{ is degenerate or} = (t, x) \in G_1$}
  \hbox{$a_2 \textit{ is degenerate or} = (y, t, c_a) \in G_2$}
  \hbox{$b_1 \textit{ is degenerate or} = (y, z) \in G_1$}
  \hbox{$b_2 \textit{ is degenerate or} = (z, x, c_b) \in G_2$}
}$
\hfill}
\caption{Conventional labels of the components of a cell}
\label{fig:cell-convention}
\end{figure}
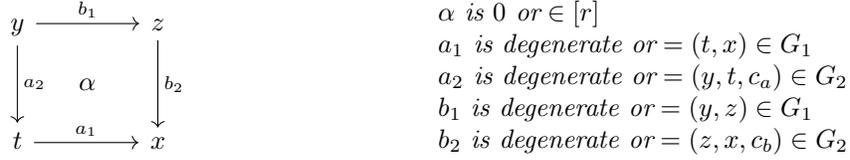

We assume an \textit{R-correspondence} $\psi$ that is an enumerative
expression of the constraint equation~\ref{eq:weight}:
For any $x \in V$, $\psi_x$ is a bijection from
\[ \{(x, t, c) \in G_2: t \lessdot x \textrm{ and } c \in [w(x-t)] \}
\uplus [r] \] to
\[ \{(z, x, c) \in G_2: z \gtrdot x \textrm{ and } c \in [w(z-x)] \}. \]
That is, $\psi_x$ maps the (multiple) edges in $G_2$ from $x$ downward
to some $t$
to a subset of the (multiple) edges in $G_2$ downward from some $z$ to $x$,
and maps the colors used in generalized
permutations ($[r]$) to the remaining edges in $G_2$ downward to $x$.

Given the R-correspondence $\psi$, we define a
function $\Psi$ that, given the west and south edge values of a cell
$a_1$ and $a_2$
and the $\alpha$ value of the cell, determines the
cell's north and east edge values $b_1$ and $b_2$
of the cell.\cite{Roby1991a}*{sec.~2.6}\cite{Fom1995a}*{sec.~3.4.4 and
  Lem.~4.2.1}
The function $\Psi$ clearly can be equivalently defined as a mapping
from the cell's southwest, northwest, and southeast node
values $x$, $y$, and $t$ and west color value $c_a$ to
the northeast node value $z$ and east color value $c_b$:
\begin{equation}\label{eq:Psi}
(z, c_b) = \Psi(t, x, y, c_a, \alpha) = \left\{
\begin{array}{lll}
(t, \nnull)      & \text{if $x = y = t$ and $\alpha = 0$} \\
\psi_x(\alpha)   & \text{if $x = y = t$ and $\alpha \ne 0$} & \text{(case ``$X$'')} \\
(x, c_a)         & \text{if $x \ne t$ and $y = t$} \\
(y, \nnull)      & \text{if $y \ne t$ and $x = t$} \\
\psi_x(x, t, c_a) & \text{if $x = y \ne t$} & \text{(case ``$\ast$'')} \\
(x \vee y, c_a)   & \text{if $x \ne y$, $x \ne t$, and $y \ne t$} & \text{(case ``$\vee$'')}
\end{array}
\right.
\end{equation}
Given the constraints which we wish growth diagrams to obey, all of
the variation allowed in $\Psi$ is contained within $\psi$.

In the archetype instantiation, there is only one color (viz.\ 1), and we
define $\psi$ as
\begin{equation}
\left\{
\begin{array}{ll}
\psi_x(1) & = x \textit{ with a box added to the
  1\textsuperscript{st} row} \\
\psi_x(x, t, 1) & = (x \textit{ with a box added to the
$k+1$\textsuperscript{st} row}, x, 1) \\
& \qquad\qquad\text{when $x$ is $t$ with a box added to the
  $k$\textsuperscript{th} row}
\end{array}
\right.
\end{equation}

\paragraph{The growth process}
Our primary object of study is the \textit{growth process}, which
operates on growth diagrams:
Given the function $\Psi$, the initial node values $\hatzero$ on the
west and south
sides, and the $\alpha_{i,j}$, the remaining values $g^1_{i,j}$, $g^2_{i,j}$,
and $n_{i,j}$ can be computed in
a wave from the southwest to the northeast of the growth diagram.

The values $g^1_{1,m}, g^1_{2,m}, \ldots, g^1_{n,m}$ (the north edge of the
diagram) form a chain of edges in
$G_1$ from $\hatzero$ to the vertex $n_{n,m}$, which is a diagram.
We can form a standard tableau which encodes this chain by
mapping
each point $n_{i,m} - n_{i-1,m}$ in $n_{n,m}$ (the point added by
$g^1_{i,m}$) to the value $i$.  This standard tableau is called the
\textit{$P$ tableau}.

The values $g^2_{n,m}, g^2_{n,m-1}, \ldots, g^2_{n,1}$ (the east edge of
the diagram) form a chain of edges in
$G_2$ from $n_{n,m}$ to $\hatzero$.
We can form a standard colored tableau which encodes this chain
by mapping
each point $n_{n,j} - n_{n,j-1}$ to the value
$(j, \color(g^2_{n,j}))$.  This standard colored tableau is called the
\textit{$Q$ tableau}.

Intuitively, the northward direction on the growth diagram can be seen
as time moving forward, successive insertions done by the insertion
algorithm.
(Hence the term ``growth''.)
The value inserted at time step $j$ (if any)
is the $i$ for which $\alpha_{i,j} \ne 0$, and its color is that
non-zero value.
The shape of the tableaux after the insertion at time $j$ is
$n_{n,j}$ at the east end of the row.

Intuitively,
the westward direction on the growth diagram is ``restriction of
magnitude'':  For all tableau insertion algorithms, the movements and
positioning of a value $i$ in $P$ is not affected by the insertion of
any value
$j > i$, either before or after the insertion of the $i$.  Truncating
the growth diagram at column $i_{max}$ shows the shape of $P$
as if none of the values $> i_{max}$ were inserted, or equivalently, if
all boxes with values $> i_{max}$ in $P$ are deleted.

In the archetype instantiation, the generalized permutation $(2, 3, 4, 1)$
that assigns 1
to $\alpha_{2,1}$, $\alpha_{3,2}$, $\alpha_{4,3}$, and
$\alpha_{1,4}$,\footnote{Remember that in a growth diagram, the
coordinates are assigned in the Cartesian manner.}
and 0 to all other $\alpha_{i,j}$ generates the growth process shown
in figure~\ref{fig:growth-unshifted}.
\begin{figure}
\begin{tikzcd}[ampersand replacement=\&,sep=small]
\emptyset \ar[rr]\ar[dd] \&\&
  1 \ar[rr]\ar[dd] \&\&
  \ast 11 \ar[rr]\ar[dd] \&\&
  \vee 21 \ar[rr]\ar[dd] \&\&
  \vee 31 \ar[dd] \\
\& X \&\&  \&\&  \&\&  \& \\
\emptyset \ar[rr]\ar[dd] \&\&
  \emptyset \ar[rr]\ar[dd] \&\&
  1 \ar[rr]\ar[dd] \&\&
  2 \ar[rr]\ar[dd] \&\&
  3 \ar[dd] \\
\&  \&\&  \&\&  \&\& X \& \\
\emptyset \ar[rr]\ar[dd] \&\&
  \emptyset \ar[rr]\ar[dd] \&\&
  1 \ar[rr]\ar[dd] \&\&
  2 \ar[rr]\ar[dd] \&\&
  2 \ar[dd] \\
\&  \&\&  \&\& X \&\&  \& \\
\emptyset \ar[rr]\ar[dd] \&\&
  \emptyset \ar[rr]\ar[dd] \&\&
  1 \ar[rr]\ar[dd] \&\&
  1 \ar[rr]\ar[dd] \&\&
  1 \ar[dd] \\
\&  \&\& X \&\&  \&\&  \& \\
\emptyset \ar[rr] \&\&
  \emptyset \ar[rr] \&\&
  \emptyset \ar[rr] \&\&
  \emptyset \ar[rr] \&\&
  \emptyset 
\end{tikzcd}
\caption[Example growth process for the permutation $(2, 3, 4, 1)$]%
{Example growth process for the permutation $(2, 3, 4, 1)$.\\
Each $n$ value is a Young diagram, which is a partition.  Each
partition is represented by listing its parts, with the empty
partition represented by $\emptyset$.\\
The cells where $z = \psi_x(\alpha)$ are marked by ``$X$''.\\
The nodes $z$ where $z = \psi_x(x, t, c_a)$ are marked by ``$\ast$''.\\
The nodes $z$ where $z = x \vee y$ are marked by ``$\vee$''.\\
These markings correspond with the marked cases of equation~\protect\ref{eq:Psi}.}
\label{fig:growth-unshifted}
\end{figure}
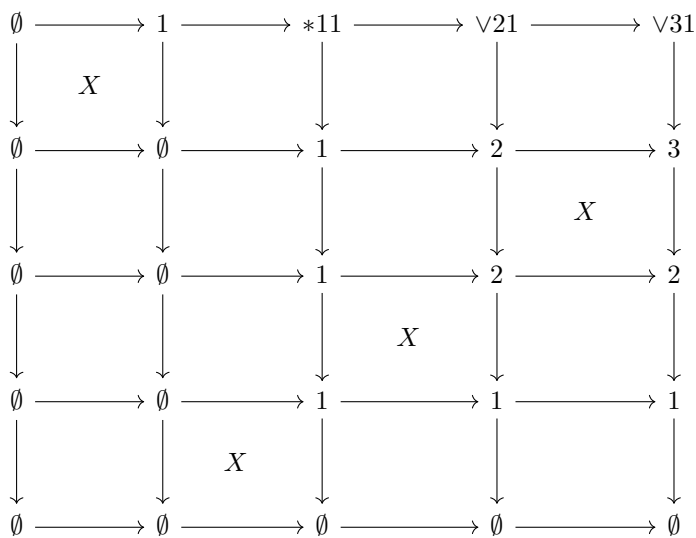
This growth generates the $P$ and $Q$ tableaux shown in figure~\ref{fig:PQ31}.
\begin{figure}
\hbox to \linewidth{\hfill
\begin{ytableau}
1 & 3 & 4 \\
2
\end{ytableau}
\hfill
\begin{ytableau}
1 & 2 & 3 \\
4
\end{ytableau}
\hfill}
\caption[$P$ and $Q$ tableaux generated by the example growth process]%
{$P$ and $Q$ tableaux generated by the example growth process
  of figure~\protect\ref{fig:growth-unshifted}}
\label{fig:PQ31}
\end{figure}

\paragraph{Relationship of R-correspondences to insertion algorithms}
The three marked cases of equation~\ref{eq:Psi} implement the features of
an insertion algorithm \cite{Fom1995a}*{sec.~4.2 and 4.4}:
\begin{enumerate}
\item[] case ``$X$'' -- This case shows where in the diagram a value will be
  inserted, if the value is larger than all values already in the tableau.
  (Remember that generalized permutations contain no duplicated values.)
  $t = x = y$ is the tableau before insertion, and $z$ is the tableau
  after the insertion.
  Additionally, the $b_2$ edge shows \textit{how} the value is
  inserted, for algorithms in which insertions can be done in more
  than one way.  This information is propagated to cells to the east,
  that is, to further steps in the overall insertion of that one
  element, and ultimately becomes the color of the corresponding
  element of the $Q$ tableau.
  Thus, the $b_2$ edge and edges eastward carry the state information
  during the insertion of one number.
\item[] case ``$\ast$'' -- This case shows what happens when a value is
  inserted but that insertion conflicts with the previous insertion of
  a larger value.  In general, the new, smaller value will be inserted in the same
  place it would have been inserted if the larger value was not
  present, and the larger value is displaced, \textit{bumped}, into another
  place.
  The $a_2$ edge shows where the smaller value is inserted, and
  the $b_2$ edge shows the new location of the previous, larger value.
\item[] case ``$\vee$'' -- This case is when a value is inserted and a
  larger value has been inserted previously, but the two insertions do
  not conflict.
\end{enumerate}

In the archetype instantiation, consider the example growth given in
figure~\ref{fig:growth-unshifted}.  The generalized permutation is
$(2, 3, 4, 1)$.  Because $r=1$ there are is only one color in the
generalized permutation, and the $G_2$ edges have only one color,
which is not shown; i.e.~all insertions are ``done in the same way''.

The southern row corresponds to
the first insertion of the Robinson-Schensted insertion, when 2 is
inserted into an empty tableau.  The westernmost cell has all nodes
$\emptyset$, because when only elements $\le 1$ are considered, no
change happens.  The next cell east shows 1 being inserted into
$\emptyset$ to form the tableau 1.  The remaining cells to the east
show that successively allowing consideration of larger elements
causes no change, as at this time, no larger elements have been inserted.
The tableau after the insertion has the shape given as northeast node
of the last cell, 1.  The $P$ tableau at this stage in time, which is
1, is constructed by
the nodes at the northern side of the row.  The $Q$ tableau, which is
1, is constructed by the nodes from the southeast corner to the
northeast node of the last cell in the row.

A more complicated case is the northern row, where 1 is inserted into
the tableau \begin{ytableau} 2 & 3 & 4 \end{ytableau} of shape 3.
The westernmost cell in the row shows the
insertion of 1, with elements $> 1$ being ignored, giving the shape
1.  The next cell shows how the insertion of 1 interacts with the
previous insertion of 2; the 2 is bumped into the second row.
That this is a bump is shown by the
nodes of the cell having $x = y \gtrdot t$.  The R-correspondence sets
$z$ to 11, meaning that the bumped element is moved to the next row.
The third cell shows the interaction of the insertion of 1 with the
insertion of 3.  In this case, the 3 is unaffected.  This is shown by
the nodes of the cell having $x \ne y$.  The $z$ is constructed by $x
\vee y$.  The fourth and easternmost node of the row is similar to the
third.

A more complex example is
the first instantiation for shifted tableaux \ref{sec:sagan},
in which the $G_2$ edges have
non-trivial labels, shown as ``$B$'', ``$R$'', and ``$-$'' in
figure~\ref{fig:growth-sagan}.  Since the $G_2$ edges are north-south
edges between nodes, during a growth, the information in an edge
propagates eastward, that is it is state information during the
insertion process, generated during
the initial insertion of an element into the tableau, possibly
modified by bumping operations, and finally recorded in the $Q$
tableau.  But that state is not affected by the state of earlier (more southern)
insertions and does not affect the state of later (more northern) insertions.

\section{Duality}

\paragraph{Inversion duality}
Since the values $\alpha_{i,j}$ form a permutation matrix, it is well known
that the inverse of that permutation is
$\alpha^{-1}_{i,j} = \alpha_{j,i}$.
Geometrically, this is transposing the matrix of
$\alpha$ values around the southwest-northeast axis.
Thus, since $(4, 1, 2, 3)$ is the inverse of $(2, 3, 4, 1)$,
the $\alpha$ values in the growth process shown in
figure~\ref{fig:growth-unshifted-inverse} are the transpose of the
$\alpha$ values in the growth process shown in
figure~\ref{fig:growth-unshifted}: $\alpha^\prime_{i,j} = \alpha_{i,j}$.

If $w(p) = 1$ for all $p$, and it is in the archetype instantiation,
there are effectively no labels on the $G_2$ edges in the growth
diagram, and applying $\Psi$ to construct the growth process to the
inverse permutation constructs the same node values as in the growth
process for the original permutation, but in the transposed locations:
$n^\prime_{i,j} = n_{j,i}$.
\begin{figure}
\growthtableauxh{%
\begin{tikzcd}[ampersand replacement=\&,sep=small]
\emptyset \ar[rr]\ar[dd] \&\&
  1 \ar[rr]\ar[dd] \&\&
  2 \ar[rr]\ar[dd] \&\&
  3 \ar[rr]\ar[dd] \&\&
  31 \ar[dd] \\
\&  \&\&  \&\& X \&\&  \& \\
\emptyset \ar[rr]\ar[dd] \&\&
  1 \ar[rr]\ar[dd] \&\&
  2 \ar[rr]\ar[dd] \&\&
  2 \ar[rr]\ar[dd] \&\&
  21 \ar[dd] \\
\&  \&\& X \&\&  \&\&  \& \\
\emptyset \ar[rr]\ar[dd] \&\&
  1 \ar[rr]\ar[dd] \&\&
  1 \ar[rr]\ar[dd] \&\&
  1 \ar[rr]\ar[dd] \&\&
  11 \ar[dd] \\
\& X \&\&  \&\&  \&\&  \& \\
\emptyset \ar[rr]\ar[dd] \&\&
  \emptyset \ar[rr]\ar[dd] \&\&
  \emptyset \ar[rr]\ar[dd] \&\&
  \emptyset \ar[rr]\ar[dd] \&\&
  1 \ar[dd] \\
\&  \&\&  \&\&  \&\& X \& \\
\emptyset \ar[rr] \&\&
  \emptyset \ar[rr] \&\&
  \emptyset \ar[rr] \&\&
  \emptyset \ar[rr] \&\&
  \emptyset 
\end{tikzcd}
}{%
\begin{ytableau}
1 & 2 & 3 \\
4
\end{ytableau}
}{%
\begin{ytableau}
1 & 3 & 4 \\
2
\end{ytableau}
}
\caption[Example growth process for the permutation $(4, 1, 2, 3)$]%
{Example growth process for the permutation $(4, 1, 2, 3)$,
  which is the inverse of $(2, 3, 4, 1)$ in
  figure~\protect\ref{fig:growth-unshifted}:
  (a) growth diagram, (b) $P$ tableau, and (c) $Q$ tableau}
\label{fig:growth-unshifted-inverse}
\end{figure}
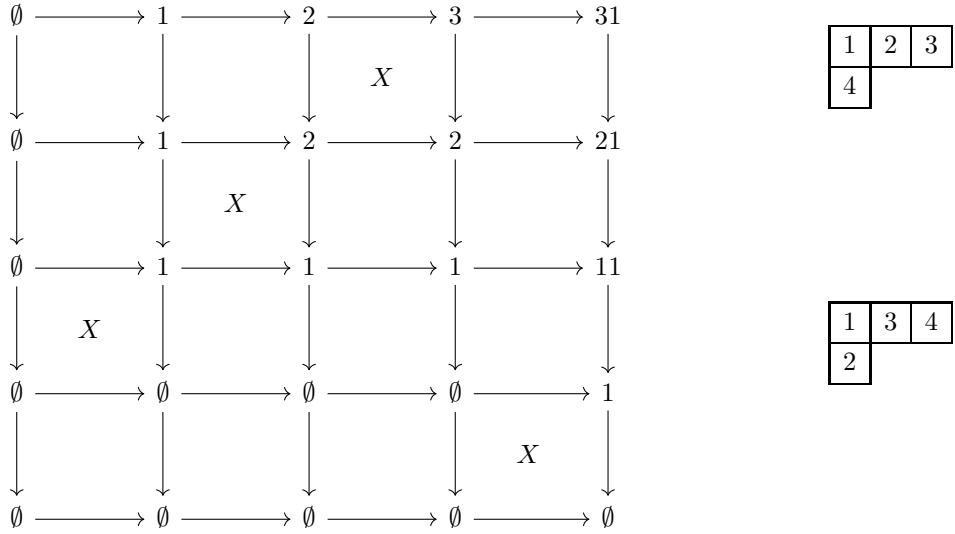

The result is that the $P$ and $Q$ tableaux for the inverse
permutation are the $Q$ and $P$ (resp.) tableaux for the original
permutation.

Note that inversion duality does not transpose the shapes that are the
$n_{i,j}$ values, nor the shapes of the $P$ or $Q$ tableaux.

\paragraph{Transpose duality}
A completely distinct (but confusingly similar) duality for
instantiations using unshifted tableaux is ``transpose duality'',
which is conceptually the application of the transpose automorphism to
$\mbbQ$ and $V$ and making the parallel changes to $\Psi$ and $\psi$:
\begin{equation}
\left\{
\begin{array}{lll}
\psi^T_x(\alpha) & = (z^T, x, c) & \textrm{\quad where } (z, x^T, c) = \psi_{x^T}(\alpha) \\
\psi^T_x(x, t, c) & = (z^T, x, c) & \textrm{\quad where } (z, x^T, c) = \psi_{x^T}(x^T, t^T, c)
\end{array}
\right.
\end{equation}
\begin{equation}
  \Psi^T(t, x, y, c_a, \alpha) = \Psi(t^T, x^T, y^T, c_a, \alpha)
\end{equation}

No insertion algorithm is self-dual under transpose duality.  But if
the transpose-dual of an algorithm is applied to a permutation, the
resulting growth diagram's node values are the
transposes all of the node values of the growth diagram generated by
the original algorithm, and the resulting $P$ and $Q$ tableaux are the
transposes of the $P$ and $Q$ tableaux generated by the original
algorithm.
The canonical example of transpose duality is the relationship between ``row insertion''
\ref{sec:row-insertion} and ``column insertion''
\ref{sec:col-insertion} for unshifted tableax.

There is also an extended sense of transpose duality where in addition
to transposig the shapes, a
bijection is applied to the colors of the elements of the
generalized permutation, and a bijection is applied to the $G_2$
edges and the colors of the elements of $Q$:
\begin{equation}
\left\{
\begin{array}{lll}
\psi^T_x(\alpha) & = (z^T, x, g(c)) & \textrm{\quad where } (z, x^T, c) = \psi_{x^T}(f(\alpha)) \\
\psi^T_x(x, t, c) & = (z^T, x, g(c)) & \textrm{\quad where } (z, x^T, c) = \psi_{x^T}(x^T, t^T, g^{-1}(c))
\end{array}
\right.
\end{equation}

\section{The insertion diagram for an insertion algorithm}
\label{sec:insert-diagram}

We next show a convenient way of displaying an R-correspondence $\psi$
that specifies $\psi_x$ for each shape $x$.

Fundamental to defining $\psi_x$ are the shapes $x^-$ covered
by $x$ in $V$, the shapes $x^+$ that cover $x$, and the set of colors
of non-zero elements
of generalized permutations $\alpha$ (which is $[r]$).

In the archetype instantiation for unshifted tableaux,
we can choose as an example $x$ the shape $(5, 3, 3, 1)$, shown
in figure~\ref{fig:tableau53311}(a).
\begin{figure}
\hfill
\begin{ytableau}
\relax & & & & \\
 & & \\
 & & \\
\relax \\
\relax
\end{ytableau}
\hspace{1em}
\hfill
\begin{ytableau}
\relax & & & & - & \none[+] \\
 & & & \none[+] \\
 & & - \\
\relax & \none[+] \\
- \\
\none[+]
\end{ytableau}
\hfill
$\begin{tikzcd}[baseline=(\tikzcdmatrixname-2-1.south), cramped, sep=small, execute at end picture={
    \horizlinex{1}{2}{7}
    \vertline{2}{6}
    \horizlinex{2}{5}{6}
    \vertlinex{3}{4}{4}
    \horizlinex{4}{3}{4}
    \vertlinex{5}{6}{2}
    \horizline{6}{2}
    \vertlinex{2}{7}{1}
}]
\null & \null  & \null  & \null  & \null & \null & \null & \null \\
\null & \null  & \null  & \null & \null & - & + \\
\null & \null  & \null  & \null & + & \null & \null \\
\null & \null  & \null  & - & \null \\
\null & \null  & + & \null &   &   \\
\null & - & \null &   &   &   \\
\null & +  &   &   &   &   \\
\null & \null &   &   &   & 
\end{tikzcd}$
\hfill
\caption[The shape $(5, 3, 3, 1)$ with its maximal and cominimal points]%
{(a) The shape $(5, 3, 3, 1)$, (b) its maximal and cominimal
  points, (c) the same shown more abstractly}
\label{fig:tableau53311}
\end{figure}
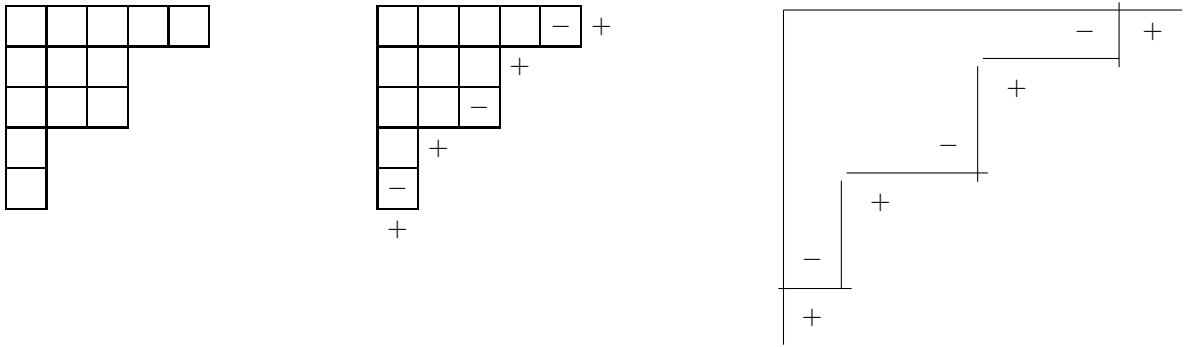
We label the points which can be removed from $x$ to form the
shapes $x^- \lessdot x$ with ``$-$'' and label the points that can be
added to $x$ to form the shapes $x^+ \gtrdot x$ with ``$+$'', as in
figure~\ref{fig:tableau53311}(b).
The points $x^-$ that can be removed from $x$ are maximal in $x$; we also
call them \textit{deletion points} of $x$.
The points $x^+$
that can be added to $x$ are \textit{cominimal}, the minimal points in
the complement of $x$ (with respect to $B$);
we also call them \textit{insertion points} of $x$.

For unshifted tableaux, the diagrams for all shapes $x$ have a similar pattern of
maximal and cominimal points, viz.\ progressing from northeast to
southwest, they alternate, with a ``$+$'' at both the northeast
and southwest ends.  For each ``$-$'', there is a ``$+$'' in
the next row south of it and a ``$+$'' in the next column east of it.
This generic pattern is shown in the more abstracted
figure~\ref{fig:tableau53311}(c).

For the archetype instantiation,
we can create an \textit{insertion diagram} for the Robinson-Schensted
insertion algorithm showing the generic $\psi$ as in
figure~\ref{fig:psi-RS}.  It encodes $\psi_x$ via:
\begin{figure}
\begin{tikzcd}[cramped, sep=small, execute at end picture={
    \horizline{1}{6}
    \vertline{2}{5}
    \horizline{2}{5}
    \vertline{3}{4}
    \horizline{3}{4}
    \vertline{4}{3}
    \horizline{4}{3}
    \vertline{5}{2}
    \horizline{5}{2}
    \vertline{6}{1}
}]
\null & \null  & \null  & \null  & \null & \hskip 1em \ar[d, "\alpha=1" near start, bend left=50] & \null \\
\null & \null  & \null  & \null & - \ar[d, bend right=50] & + \\
\null & \null  & \null  & - \ar[d, bend right=50] & + &   \\
\null & \null  & - \ar[d, bend right=50] & + &   &   \\
\null & - \ar[d, bend right=50] & + &   &   &   \\
\null & + &   &   &   &   \\
\null &   &   &   &   &
\end{tikzcd}
\caption{Generic insertion diagram for Robinson-Schensted
  (row) insertion}
\label{fig:psi-RS}
\end{figure}
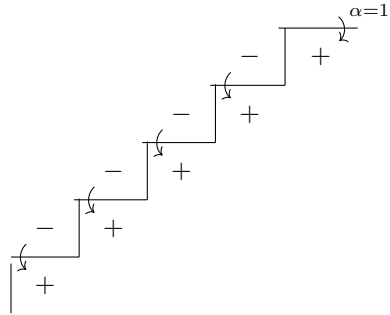
\begin{enumerate}
\item[] case ``$X$'' --
For the one $\alpha$ color $1$,
consider $\psi_x(1) = (x^+, x, c) \in G_2$ where $x^+ = x \uplus \{q\}$.
In the insertion diagram,
there is an unanchored arrow, labeled with ``$\alpha = 1$'', to the point
$q$ in the northeast.  If $w(q) > 1$,
the edge $x \overset{G_2}{\longleftarrow} x^+$ is multiple, we
would also label this edge with the color $c$ that selected that
edge in $G_2$, but for unshifted tableaux, $w(q) = 1$.
Arrows of this form specify the processing of cells where $\alpha \ne 0$ as
in figure~\ref{fig:ins-diag-cell}(a).
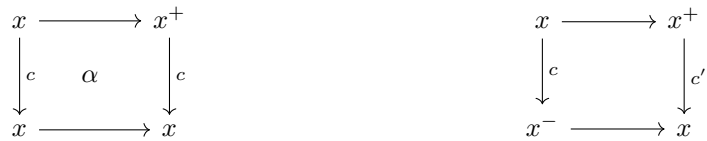
\begin{figure}
\hbox to \linewidth{\hfill
\begin{tikzcd}[sep=small]
  x \ar[rr]\ar[dd,"c"] & & x^+ \ar[dd,"c"] \\
& \alpha \\
  x \ar[rr] & & x
\end{tikzcd}
\hfill
\begin{tikzcd}[sep=small]
  x \ar[rr]\ar[dd,"c"] & & x^+ \ar[dd,"c^\prime"] \\
& \null \\
  x^- \ar[rr] & & x
\end{tikzcd}
\hfill}
\caption[Cells representing insertion and bumping]%
{Cells (a) representing insertion and (b) representing bumping}
\label{fig:ins-diag-cell}
\end{figure}
\item[] case ``$\ast$'' --
Now consider $(x, x^-, c) \in G_2$ where $x^- = x \setminus \{p\}$
and $c \in [w(p)]$.
The value $\psi_x(x, x^-, c) = (x^+, x, c^\prime) \in G_2$
for some $x^+ = x \uplus\{q\}$ and $c^\prime \in [w(q)]$.
In the insertion diagram, there is an arrow from $p$ (which is a
maximal point of $x$) to
$q$ (which is a cominimal point of $x$).
We label the arrow with the combination $c/c^\prime$ to show the
colors of the ``downward from $x$'' and ``downward to $x$'' edges,
respectively.  If either $w(p) = 1$ or $w(q) = 1$, we omit
$c$ or $c^\prime$ from the label (resp.) and for the archetype instantion, we
can omit the label entirely.
Arrows of this form specify the processing of cells where $\alpha = 0$ as
in figure~\ref{fig:ins-diag-cell}(b).
\end{enumerate}
In order for the arrows of an insertion diagram to define an
acceptable $\psi_x$, the following constraints must be met:
\begin{enumerate}
\item
There must be one ``$\alpha = \cdot$'' arrow to
some insertion point for each $\alpha$ color.
\item
For each deletion point $p$, there must be $w(p)$ arrows in the diagram
leading from it, with one labeled with each of the colors in $[w(p)]$.
\item
For each insertion point $q$, there must be $w(q)$ arrows
in the diagram leading to it, with one labeled with each of the colors
in $[w(q)]$.
\end{enumerate}

For an instantiation, there need not be a single generic pattern for
the insertion diagrams for all $x \in V$.  As \cite{McLar1986} emphasizes,
an insertion diagram
could be independently chosen for each $x$, and the assemblage of them
would define an R-correspondence $\psi$ for the
w.d.g.g.\ $\mathcal{G}$.  Since $V$ is countably infinite, there
are a continuum number of possible $\psi$.  Nonetheless, the more
interesting insertion algorithms have very systematic
R-correspondences which can be represented by one or a few generic
insertion diagrams.

Of course, insertion diagrams provide no more information than
specifying $\psi$ directly (e.g.~as in \cite{Fom1995a}*{sec.~4.2 and
4.5}), but they provide the information in a way that is much more intuitively
accessible, as will be shown in later sections that discuss
various instantiations of the theory.

As \cite{McLar1986} notes,
all R-correspondences for a particular lattice of diagrams
and weight function prove the same basic enumeration formulas.
However, a particular R-correspondence may have distinct enumerative
value if it transforms an interesting subset of permutations into
an interesting subset $P$/$Q$ pairs.

\section{Unshifted tableaux algorithms}

\paragraph{Robinson-Schensted insertion}
\label{sec:row-insertion}
The archetype instantiation of an insertion algorithm is
Robinson-Schensted insertion for unshifted tableaux
\cite{Schen1961}, also called
``row insertion'' and ``Schensted insertion'' in \cite{Haim1989a}*{sec.~2}.
Its generic insertion diagram is shown in shown in figure~\ref{fig:psi-RS}.

\paragraph{Column insertion}
\label{sec:col-insertion}
Similarly, the generic insertion diagram for the ``column
insertion'' variant of the Robinson-Schensted algorithm for unshifted
tableaux is shown in figure~\ref{fig:psi-RScol}.
\begin{figure}
\begin{tikzcd}[cramped, sep=small, execute at end picture={
    \horizline{1}{6}
    \vertline{2}{5}
    \horizline{2}{5}
    \vertline{3}{4}
    \horizline{3}{4}
    \vertline{4}{3}
    \horizline{4}{3}
    \vertline{5}{2}
    \horizline{5}{2}
    \vertline{6}{1}
}]
\null & \null  & \null  & \null  & \null & \null & \null \\
\null & \null  & \null  & \null & - \ar[r, bend right=50] & + \\
\null & \null  & \null  & - \ar[r, bend right=50] & + &   \\
\null & \null  & - \ar[r, bend right=50] & + &   &   \\
\null & - \ar[r, bend right=50] & + &   &   &   \\
\null & + &   &   &   &   \\
\null & \hskip 1em \ar[u, "\alpha=1"', bend right=50] &   &   &   &
\end{tikzcd}
\caption{Generic insertion diagram for column insertion into unshifted tableaux}
\label{fig:psi-RScol}
\end{figure}
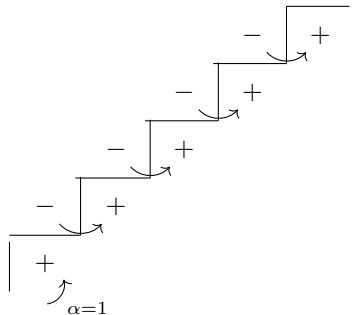
Note that in this paper, all sequences of inserted values are
generalized permutations and so have no repeated values (and so we do
not attach Knuth's name to the algorithms).
Thus, 
column insertion is the transpose-dual of row insertion, which can be
seen by the fact that its insertion diagram,
figure~\ref{fig:psi-RScol}, is the transpose of the insertion diagram
of row insertion, figure~\ref{fig:psi-RS}.

\paragraph{Haiman's left-right insertion}
\label{sec:left-right}
Given a particular lattice of diagrams $V$,
the weight equation~\ref{eq:weight} is linear on
the differential degree $r$ and the weight function $w$ together.
The archetype
instantiation has $r = 1$ and $w(p) = 1$, so another instantiation on
the lattice of Young diagrams is the ``double'' of it, $r = 2$ and $w(p) = 2$.

Haiman \cite{Haim1989a}*{sec.~4} constructed an insertion algorithm
for this doubled instantiation
that is a combination of row and column insertion, called ``left-right
insertion''.  Since $r=2$, there are two colors in the
generalized permutation, with $\alpha = 1$ called uncircled and
$\alpha = 2$ called circled.  Uncircled elements are inserted with row
insertion and circled elements are inserted with column insertion,
with the elements' circling recorded in the $Q$ tableau.  The
insertion diagram for left-right insertion is a straightforward
combination of the insertion diagrams for row and column insertion, as
shown in figure~\ref{fig:psi-left-right}, where the edges in $G_2$
have two labels, represented ``$U$'' and ``$C$'', and the values of
$\alpha$ are similarly represented by ``$U$'' and ``$C$''.
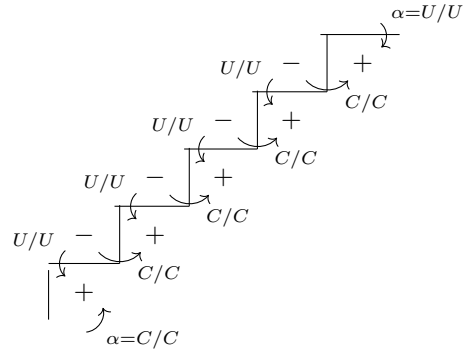
\begin{figure}
\begin{tikzcd}[cramped, sep=small, execute at end picture={
    \horizline{1}{6}
    \vertline{2}{5}
    \horizline{2}{5}
    \vertline{3}{4}
    \horizline{3}{4}
    \vertline{4}{3}
    \horizline{4}{3}
    \vertline{5}{2}
    \horizline{5}{2}
    \vertline{6}{1}
}]
\null & \null  & \null  & \null  & \null & \hskip 1em \ar[d, "\alpha=U/U" near start, bend left=50] & \null \\
\null & \null  & \null  & \null & - \ar[d, "U/U"' near start, bend right=50] \ar[r, "C/C"' near end, bend right=50] & + \\
\null & \null  & \null  & - \ar[d, "U/U"' near start, bend right=50] \ar[r, "C/C"' near end, bend right=50] & + &   \\
\null & \null  & - \ar[d, "U/U"' near start, bend right=50] \ar[r, "C/C"' near end, bend right=50] & + &   &   \\
\null & - \ar[d, "U/U"' near start, bend right=50] \ar[r, "C/C"' near end, bend right=50] & + &   &   &   \\
\null & + &   &   &   &   \\
\null & \hskip 1em \ar[u, "\alpha=C/C"', bend right=50] &   &   &   &
\end{tikzcd}
\caption{Generic insertion diagram for left-right insertion}
\label{fig:psi-left-right}
\end{figure}

An example of a left-right insertion as a growth diagram is
figure~\ref{fig:growth-left-right}.
Note that for any insertion/bump sequence, the colors of all $G_2$
edges in that row of the growth are the same, and the circling of
an element $i$ in the $Q$ tableau is the same as the circling of the
$i$-th element of the generalized permutation, which created it. 

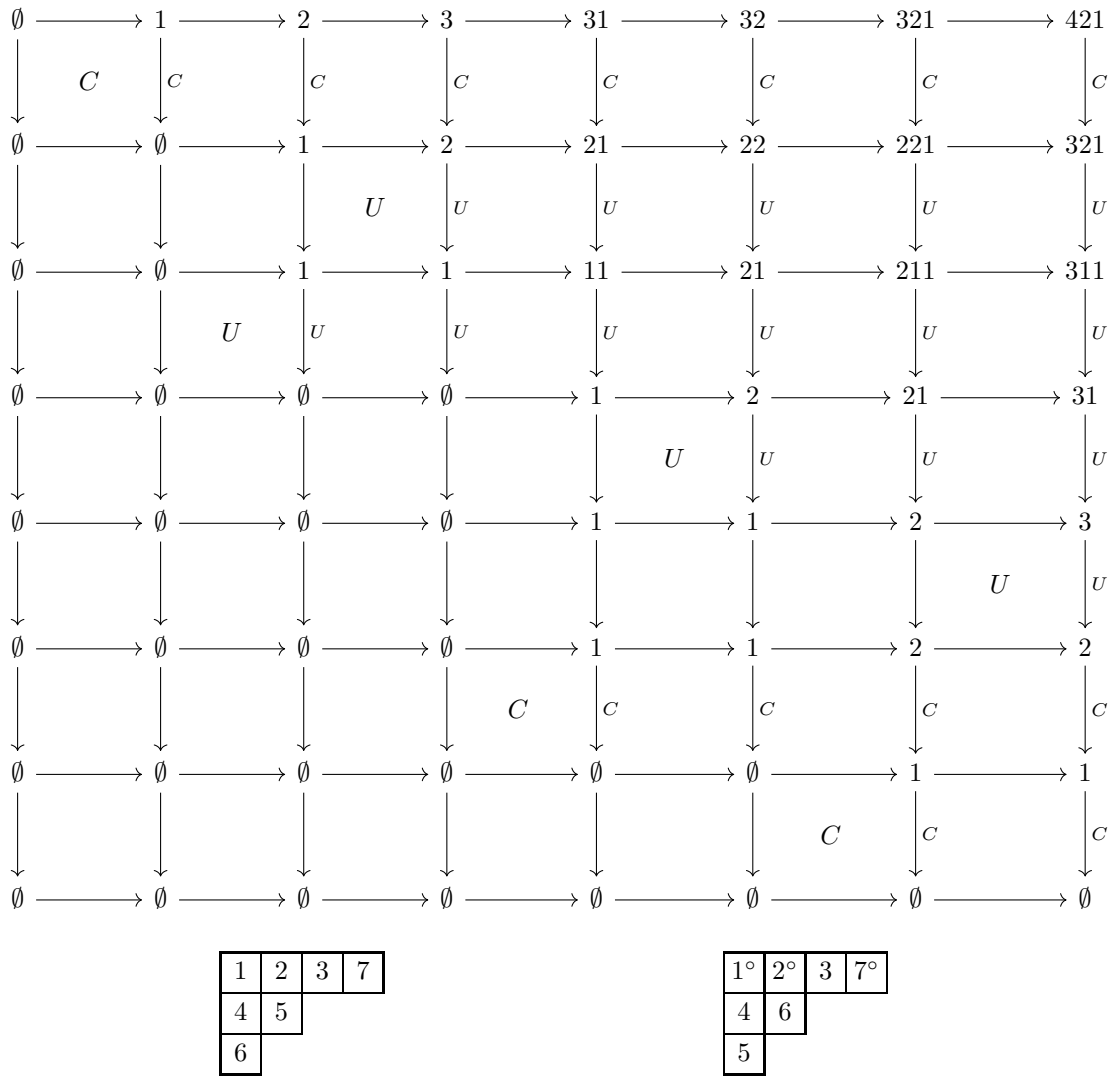
\begin{figure}
\growthtableauxv{%
\begin{tikzcd}[ampersand replacement=\&,sep=small]
\emptyset \ar[rr]\ar[dd] \&\&
  1 \ar[rr]\ar[dd,"C"] \&\&
  2 \ar[rr]\ar[dd,"C"] \&\&
  3 \ar[rr]\ar[dd,"C"] \&\&
  31 \ar[rr]\ar[dd,"C"] \&\&
  32 \ar[rr]\ar[dd,"C"] \&\&
  321 \ar[rr]\ar[dd,"C"] \&\&
  421 \ar[dd,"C"] \\
\& C \&\&  \&\&  \&\&  \&\&  \&\&  \&\&  \& \\
\emptyset \ar[rr]\ar[dd] \&\&
  \emptyset \ar[rr]\ar[dd] \&\&
  1 \ar[rr]\ar[dd] \&\&
  2 \ar[rr]\ar[dd,"U"] \&\&
  21 \ar[rr]\ar[dd,"U"] \&\&
  22 \ar[rr]\ar[dd,"U"] \&\&
  221 \ar[rr]\ar[dd,"U"] \&\&
  321 \ar[dd,"U"] \\
\&  \&\&  \&\& U \&\&  \&\&  \&\&  \&\&  \& \\
\emptyset \ar[rr]\ar[dd] \&\&
  \emptyset \ar[rr]\ar[dd] \&\&
  1 \ar[rr]\ar[dd,"U"] \&\&
  1 \ar[rr]\ar[dd,"U"] \&\&
  11 \ar[rr]\ar[dd,"U"] \&\&
  21 \ar[rr]\ar[dd,"U"] \&\&
  211 \ar[rr]\ar[dd,"U"] \&\&
  311 \ar[dd,"U"] \\
\&  \&\& U \&\&  \&\&  \&\&  \&\&  \&\&  \& \\
\emptyset \ar[rr]\ar[dd] \&\&
  \emptyset \ar[rr]\ar[dd] \&\&
  \emptyset \ar[rr]\ar[dd] \&\&
  \emptyset \ar[rr]\ar[dd] \&\&
  1 \ar[rr]\ar[dd] \&\&
  2 \ar[rr]\ar[dd,"U"] \&\&
  21 \ar[rr]\ar[dd,"U"] \&\&
  31 \ar[dd,"U"] \\
\&  \&\&  \&\&  \&\&  \&\& U \&\&  \&\&  \& \\
\emptyset \ar[rr]\ar[dd] \&\&
  \emptyset \ar[rr]\ar[dd] \&\&
  \emptyset \ar[rr]\ar[dd] \&\&
  \emptyset \ar[rr]\ar[dd] \&\&
  1 \ar[rr]\ar[dd] \&\&
  1 \ar[rr]\ar[dd] \&\&
  2 \ar[rr]\ar[dd] \&\&
  3 \ar[dd,"U"] \\
\&  \&\&  \&\&  \&\&  \&\&  \&\&  \&\& U \& \\
\emptyset \ar[rr]\ar[dd] \&\&
  \emptyset \ar[rr]\ar[dd] \&\&
  \emptyset \ar[rr]\ar[dd] \&\&
  \emptyset \ar[rr]\ar[dd] \&\&
  1 \ar[rr]\ar[dd,"C"] \&\&
  1 \ar[rr]\ar[dd,"C"] \&\&
  2 \ar[rr]\ar[dd,"C"] \&\&
  2 \ar[dd,"C"] \\
\&  \&\&  \&\&  \&\& C \&\&  \&\&  \&\&  \& \\
\emptyset \ar[rr]\ar[dd] \&\&
  \emptyset \ar[rr]\ar[dd] \&\&
  \emptyset \ar[rr]\ar[dd] \&\&
  \emptyset \ar[rr]\ar[dd] \&\&
  \emptyset \ar[rr]\ar[dd] \&\&
  \emptyset \ar[rr]\ar[dd] \&\&
  1 \ar[rr]\ar[dd,"C"] \&\&
  1 \ar[dd,"C"] \\
\&  \&\&  \&\&  \&\&  \&\&  \&\& C \&\&  \& \\
\emptyset \ar[rr] \&\&
  \emptyset \ar[rr] \&\&
  \emptyset \ar[rr] \&\&
  \emptyset \ar[rr] \&\&
  \emptyset \ar[rr] \&\&
  \emptyset \ar[rr] \&\&
  \emptyset \ar[rr] \&\&
  \emptyset 
\end{tikzcd}
}{%
\begin{ytableau}
1 & 2 & 3 & 7 \\
4 & 5 \\
6
\end{ytableau}
}{%
\begin{ytableau}
1^\circ & 2^\circ & 3 & 7^\circ \\
4 & 6 \\
5
\end{ytableau}
}
\caption[Growth for left-right insertion]%
{Growth for the permutation
$(6^\circ, 4^\circ, 7, 5, 2, 3, 1^\circ)$ for left-right insertion:
(a) growth diagram, (b) $P$ tableau, and (c) $Q$ tableau}
\label{fig:growth-left-right}
\end{figure}

Left-right insertion has transposition duality when the circling of
all elements of the generalized permutation is inverted.  This can be
seen by inspecting the insertion diagram
figure~\ref{fig:psi-left-right}:  transposing it and replacing all
``$U$'' with ``$C$'' and vice-versa leaves it unchanged.

\paragraph{McLarnan's fairy insertion}
\cite{McLar1986} and \cite{GarMcLar1987} introduce ``generic''
insertion algorithms, parameterized algorithms that generate a family
of different insertion algorithms.  Any such family makes it easy to
define many algorithms with baroque and unusual structures.
These aspects resemble the creation of unconventional pieces in
fairy chess so we call these algorithms ``fairy'' algorithms.

\cite{McLar1986}*{Fig.~1.4} and \cite{GarMcLar1987}*{Fig.~4}
illustrate a relatively simple fairy algorithm.
The generic insertion diagram for this algorithm is
figure~\ref{fig:psi-unshifted-mclarnan} and is not as visually simple
as for other algorithms;
the arrows from the deletion points to the insertion points, instead
of preserving the order of the two sets, reverse it,
reserving the northeastmost insertion point as the
``$\alpha=\cdot$'' point.  In McLarnan's words,
\begin{quote}
A slightly more complex bumping scheme might take the bottom removable
square to the top addible square, the second lowest removable square
to the second highest addible square, and so on. For this scheme as
for the row insertion bumping scheme, the lone square is always in the
first row.
\end{quote}
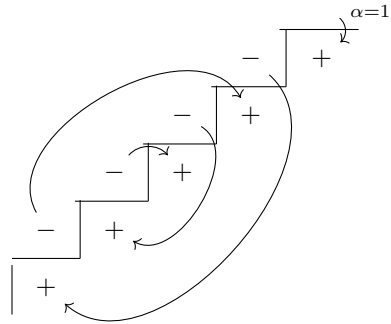
\begin{figure}
\begin{tikzcd}[cramped, sep=small, execute at end picture={
    \horizline{1}{6}
    \vertline{2}{5}
    \horizline{2}{5}
    \vertline{3}{4}
    \horizline{3}{4}
    \vertline{4}{3}
    \horizline{4}{3}
    \vertline{5}{2}
    \horizline{5}{2}
    \vertline{6}{1}
}]
\null & \null  & \null  & \null  & \null & \hskip 1em \ar[d, "\alpha=1" near start, bend left=50] & \null \\
\null & \null  & \null  & \null & - \ar[ddddlll, bend left=90] & + \\
\null & \null  & \null  & - \ar[ddl, bend left=90] & + &   \\
\null & \null  & - \ar[r, bend left=50] & + &   &   \\
\null & - \ar[uurrr, bend left=90] & + &   &   &   \\
\null & + &   &   &   &   \\
\null &   &   &   &   &
\end{tikzcd}
\caption{Generic insertion diagram for McLarnan's fairy insertion}
\label{fig:psi-unshifted-mclarnan}
\end{figure}
An example of the algorithm is
figure~\ref{fig:growth-unshifted-mclarnan}, taken from
\cite{GarMcLar1987}*{Fig.~7}.
\begin{figure}
\growthtableauxv{%
\begin{tikzcd}[ampersand replacement=\&,sep=small]
\emptyset \ar[rr]\ar[dd] \&\&
  1 \ar[rr]\ar[dd] \&\&
  11 \ar[rr]\ar[dd] \&\&
  21 \ar[rr]\ar[dd] \&\&
  211 \ar[rr]\ar[dd] \&\&
  2111 \ar[rr]\ar[dd] \&\&
  2211 \ar[rr]\ar[dd] \&\&
  3211 \ar[dd] \\
\&  \&\&  \&\& X \&\&  \&\&  \&\&  \&\&  \& \\
\emptyset \ar[rr]\ar[dd] \&\&
  1 \ar[rr]\ar[dd] \&\&
  11 \ar[rr]\ar[dd] \&\&
  11 \ar[rr]\ar[dd] \&\&
  111 \ar[rr]\ar[dd] \&\&
  211 \ar[rr]\ar[dd] \&\&
  221 \ar[rr]\ar[dd] \&\&
  321 \ar[dd] \\
\&  \&\&  \&\&  \&\&  \&\&  \&\&  \&\& X \& \\
\emptyset \ar[rr]\ar[dd] \&\&
  1 \ar[rr]\ar[dd] \&\&
  11 \ar[rr]\ar[dd] \&\&
  11 \ar[rr]\ar[dd] \&\&
  111 \ar[rr]\ar[dd] \&\&
  211 \ar[rr]\ar[dd] \&\&
  221 \ar[rr]\ar[dd] \&\&
  221 \ar[dd] \\
\& X \&\&  \&\&  \&\&  \&\&  \&\&  \&\&  \& \\
\emptyset \ar[rr]\ar[dd] \&\&
  \emptyset \ar[rr]\ar[dd] \&\&
  1 \ar[rr]\ar[dd] \&\&
  1 \ar[rr]\ar[dd] \&\&
  11 \ar[rr]\ar[dd] \&\&
  21 \ar[rr]\ar[dd] \&\&
  211 \ar[rr]\ar[dd] \&\&
  211 \ar[dd] \\
\&  \&\&  \&\&  \&\&  \&\& X \&\&  \&\&  \& \\
\emptyset \ar[rr]\ar[dd] \&\&
  \emptyset \ar[rr]\ar[dd] \&\&
  1 \ar[rr]\ar[dd] \&\&
  1 \ar[rr]\ar[dd] \&\&
  11 \ar[rr]\ar[dd] \&\&
  11 \ar[rr]\ar[dd] \&\&
  21 \ar[rr]\ar[dd] \&\&
  21 \ar[dd] \\
\&  \&\&  \&\&  \&\&  \&\&  \&\& X \&\&  \& \\
\emptyset \ar[rr]\ar[dd] \&\&
  \emptyset \ar[rr]\ar[dd] \&\&
  1 \ar[rr]\ar[dd] \&\&
  1 \ar[rr]\ar[dd] \&\&
  11 \ar[rr]\ar[dd] \&\&
  11 \ar[rr]\ar[dd] \&\&
  11 \ar[rr]\ar[dd] \&\&
  11 \ar[dd] \\
\&  \&\& X \&\&  \&\&  \&\&  \&\&  \&\&  \& \\
\emptyset \ar[rr]\ar[dd] \&\&
  \emptyset \ar[rr]\ar[dd] \&\&
  \emptyset \ar[rr]\ar[dd] \&\&
  \emptyset \ar[rr]\ar[dd] \&\&
  1 \ar[rr]\ar[dd] \&\&
  1 \ar[rr]\ar[dd] \&\&
  1 \ar[rr]\ar[dd] \&\&
  1 \ar[dd] \\
\&  \&\&  \&\&  \&\& X \&\&  \&\&  \&\&  \& \\
\emptyset \ar[rr] \&\&
  \emptyset \ar[rr] \&\&
  \emptyset \ar[rr] \&\&
  \emptyset \ar[rr] \&\&
  \emptyset \ar[rr] \&\&
  \emptyset \ar[rr] \&\&
  \emptyset \ar[rr] \&\&
  \emptyset 
\end{tikzcd}
}{%
\begin{ytableau}
1 & 3 & 7 \\
2 & 6 \\
4 \\
5
\end{ytableau}
}{%
\begin{ytableau}
1 & 3 & 6 \\
2 & 5 \\
4 \\
7
\end{ytableau}
}
\caption[Growth for McLarnan's fairy insertion]%
{Growth for the permutation
$(4, 2, 6, 5, 1, 7, 3)$ for McLarnan's fairy insertion:
(a) growth diagram, (b) $P$ tableau, and (c) $Q$ tableau}
\label{fig:growth-unshifted-mclarnan}
\end{figure}

\paragraph{Jitter fairy insertion}
\label{sec:jitter-insertion}
Other algorithms we catalog here leave the labels (circling)
unchanged for most bumping operations, occasionally changing labels
when a bump involves the northeast or southwest corner of the
tableau.  We now construct a fairy algorithm that changes labels for
\textit{all} bumps, and label it \textit{jitter insertion}.
The generic insertion diagram for this algorithm is
figure~\ref{fig:psi-jitter}.
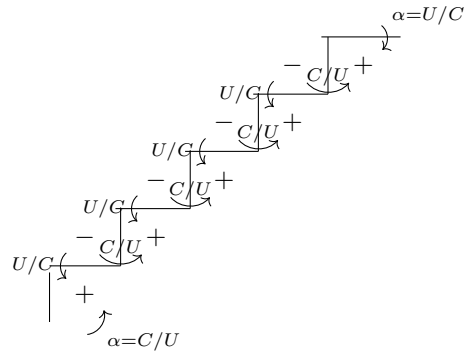
\begin{figure}
\begin{tikzcd}[cramped, sep=small, execute at end picture={
    \horizline{1}{6}
    \vertline{2}{5}
    \horizline{2}{5}
    \vertline{3}{4}
    \horizline{3}{4}
    \vertline{4}{3}
    \horizline{4}{3}
    \vertline{5}{2}
    \horizline{5}{2}
    \vertline{6}{1}
}]
\null & \null  & \null  & \null  & \null & \hskip 1em \ar[d, "\alpha=U/C" near start, bend left=50] & \null \\
\null & \null  & \null  & \null & - \ar[d, "U/C"', bend right=50] \ar[r, "C/U", bend right=50] & + \\
\null & \null  & \null  & - \ar[d, "U/C"', bend right=50] \ar[r, "C/U", bend right=50] & + &   \\
\null & \null  & - \ar[d, "U/C"', bend right=50] \ar[r, "C/U", bend right=50] & + &   &   \\
\null & - \ar[d, "U/C"', bend right=50] \ar[r, "C/U", bend right=50] & + &   &   &   \\
\null & + &   &   &   &   \\
\null & \hskip 1em \ar[u, "\alpha=C/U"', bend right=50] &   &   &   &
\end{tikzcd}
\caption{Generic insertion diagram for the ``jitter'' insertion}
\label{fig:psi-jitter}
\end{figure}
It resembles left-right insertion~\ref{sec:left-right}, except that
every time a number is inserted or bumped, its circling is reversed,
and so the next time the number is bumped, it will move in the
reverse direction in the tableau.
Examples of this algorithm are figures~\ref{fig:growth-jitter-1},
\ref{fig:growth-jitter-2},  \ref{fig:growth-jitter-3},
and \ref{fig:growth-jitter-4}.
\begin{figure}
\growthtableauxh{%
\begin{tikzcd}[ampersand replacement=\&,sep=small]
\emptyset \ar[rr]\ar[dd] \&\&
  1 \ar[rr]\ar[dd] \&\&
  2 \ar[rr]\ar[dd] \&\&
  3 \ar[rr]\ar[dd] \&\&
  4 \ar[dd,"C"] \\
\&  \&\&  \&\&  \&\& U \& \\
\emptyset \ar[rr]\ar[dd] \&\&
  1 \ar[rr]\ar[dd] \&\&
  2 \ar[rr]\ar[dd] \&\&
  3 \ar[rr]\ar[dd,"C"] \&\&
  3 \ar[dd,"C"] \\
\&  \&\&  \&\& U \&\&  \& \\
\emptyset \ar[rr]\ar[dd] \&\&
  1 \ar[rr]\ar[dd] \&\&
  2 \ar[rr]\ar[dd,"C"] \&\&
  2 \ar[rr]\ar[dd,"C"] \&\&
  2 \ar[dd,"C"] \\
\&  \&\& U \&\&  \&\&  \& \\
\emptyset \ar[rr]\ar[dd] \&\&
  1 \ar[rr]\ar[dd,"C"] \&\&
  1 \ar[rr]\ar[dd,"C"] \&\&
  1 \ar[rr]\ar[dd,"C"] \&\&
  1 \ar[dd,"C"] \\
\& U \&\&  \&\&  \&\&  \& \\
\emptyset \ar[rr] \&\&
  \emptyset \ar[rr] \&\&
  \emptyset \ar[rr] \&\&
  \emptyset \ar[rr] \&\&
  \emptyset 
\end{tikzcd}
}{%
\begin{ytableau}
1 & 2 & 3 & 4
\end{ytableau}
}{%
\begin{ytableau}
1^\circ & 2^\circ & 3^\circ & 4^\circ
\end{ytableau}
}
\caption[Growth for the permutation $(1, 2, 3, 4)$ for the ``jitter'' algorithm]%
{Growth for the permutation $(1, 2, 3, 4)$ for the ``jitter'' algorithm:
(a) growth diagram, (b) $P$ tableau, and (c) $Q$ tableau}
\label{fig:growth-jitter-1}
\end{figure}
\begin{figure}
\growthtableauxh{%
\begin{tikzcd}[ampersand replacement=\&,sep=small]
\emptyset \ar[rr]\ar[dd] \&\&
  1 \ar[rr]\ar[dd,"C"] \&\&
  2 \ar[rr]\ar[dd,"U"] \&\&
  21 \ar[rr]\ar[dd,"C"] \&\&
  22 \ar[dd,"U"] \\
\& U \&\&  \&\&  \&\&  \& \\
\emptyset \ar[rr]\ar[dd] \&\&
  \emptyset \ar[rr]\ar[dd] \&\&
  1 \ar[rr]\ar[dd,"C"] \&\&
  2 \ar[rr]\ar[dd,"U"] \&\&
  21 \ar[dd,"C"] \\
\&  \&\& U \&\&  \&\&  \& \\
\emptyset \ar[rr]\ar[dd] \&\&
  \emptyset \ar[rr]\ar[dd] \&\&
  \emptyset \ar[rr]\ar[dd] \&\&
  1 \ar[rr]\ar[dd,"C"] \&\&
  2 \ar[dd,"U"] \\
\&  \&\&  \&\& U \&\&  \& \\
\emptyset \ar[rr]\ar[dd] \&\&
  \emptyset \ar[rr]\ar[dd] \&\&
  \emptyset \ar[rr]\ar[dd] \&\&
  \emptyset \ar[rr]\ar[dd] \&\&
  1 \ar[dd,"C"] \\
\&  \&\&  \&\&  \&\& U \& \\
\emptyset \ar[rr] \&\&
  \emptyset \ar[rr] \&\&
  \emptyset \ar[rr] \&\&
  \emptyset \ar[rr] \&\&
  \emptyset 
\end{tikzcd}
}{%
\begin{ytableau}
1 & 2 \\
3 & 4
\end{ytableau}
}{%
\begin{ytableau}
1^\circ & 2 \\
3^\circ & 4
\end{ytableau}
}
\caption[Growth for the permutation $(4, 3, 2, 1)$ for the ``jitter'' algorithm]%
{Growth for the permutation $(4, 3, 2, 1)$ for the ``jitter'' algorithm:
(a) growth diagram, (b) $P$ tableau, and (c) $Q$ tableau}
\label{fig:growth-jitter-2}
\end{figure}
\begin{figure}
\growthtableauxh{%
\begin{tikzcd}[ampersand replacement=\&,sep=small]
\emptyset \ar[rr]\ar[dd] \&\&
  1 \ar[rr]\ar[dd] \&\&
  11 \ar[rr]\ar[dd] \&\&
  111 \ar[rr]\ar[dd] \&\&
  1111 \ar[dd,"U"] \\
\&  \&\&  \&\&  \&\& C \& \\
\emptyset \ar[rr]\ar[dd] \&\&
  1 \ar[rr]\ar[dd] \&\&
  11 \ar[rr]\ar[dd] \&\&
  111 \ar[rr]\ar[dd,"U"] \&\&
  111 \ar[dd,"U"] \\
\&  \&\&  \&\& C \&\&  \& \\
\emptyset \ar[rr]\ar[dd] \&\&
  1 \ar[rr]\ar[dd] \&\&
  11 \ar[rr]\ar[dd,"U"] \&\&
  11 \ar[rr]\ar[dd,"U"] \&\&
  11 \ar[dd,"U"] \\
\&  \&\& C \&\&  \&\&  \& \\
\emptyset \ar[rr]\ar[dd] \&\&
  1 \ar[rr]\ar[dd,"U"] \&\&
  1 \ar[rr]\ar[dd,"U"] \&\&
  1 \ar[rr]\ar[dd,"U"] \&\&
  1 \ar[dd,"U"] \\
\& C \&\&  \&\&  \&\&  \& \\
\emptyset \ar[rr] \&\&
  \emptyset \ar[rr] \&\&
  \emptyset \ar[rr] \&\&
  \emptyset \ar[rr] \&\&
  \emptyset 
\end{tikzcd}
}{%
\begin{ytableau}
1 \\
2 \\
3 \\
4
\end{ytableau}
}{%
\begin{ytableau}
1 \\
2 \\
3 \\
4
\end{ytableau}
}
\caption[Growth for the permutation $(1^\circ, 2^\circ, 3^\circ, 4^\circ)$ for the ``jitter'' algorithm]%
{Growth for the permutation $(1^\circ, 2^\circ, 3^\circ, 4^\circ)$ for the ``jitter'' algorithm:
(a) growth diagram, (b) $P$ tableau, and (c) $Q$ tableau}
\label{fig:growth-jitter-3}
\end{figure}
\begin{figure}
\growthtableauxh{%
\begin{tikzcd}[ampersand replacement=\&,sep=small]
\emptyset \ar[rr]\ar[dd] \&\&
  1 \ar[rr]\ar[dd,"U"] \&\&
  11 \ar[rr]\ar[dd,"C"] \&\&
  21 \ar[rr]\ar[dd,"U"] \&\&
  22 \ar[dd,"C"] \\
\& C \&\&  \&\&  \&\&  \& \\
\emptyset \ar[rr]\ar[dd] \&\&
  \emptyset \ar[rr]\ar[dd] \&\&
  1 \ar[rr]\ar[dd,"U"] \&\&
  11 \ar[rr]\ar[dd,"C"] \&\&
  21 \ar[dd,"U"] \\
\&  \&\& C \&\&  \&\&  \& \\
\emptyset \ar[rr]\ar[dd] \&\&
  \emptyset \ar[rr]\ar[dd] \&\&
  \emptyset \ar[rr]\ar[dd] \&\&
  1 \ar[rr]\ar[dd,"U"] \&\&
  11 \ar[dd,"C"] \\
\&  \&\&  \&\& C \&\&  \& \\
\emptyset \ar[rr]\ar[dd] \&\&
  \emptyset \ar[rr]\ar[dd] \&\&
  \emptyset \ar[rr]\ar[dd] \&\&
  \emptyset \ar[rr]\ar[dd] \&\&
  1 \ar[dd,"U"] \\
\&  \&\&  \&\&  \&\& C \& \\
\emptyset \ar[rr] \&\&
  \emptyset \ar[rr] \&\&
  \emptyset \ar[rr] \&\&
  \emptyset \ar[rr] \&\&
  \emptyset 
\end{tikzcd}
}{%
\begin{ytableau}
1 & 3 \\
2 & 4
\end{ytableau}
}{%
\begin{ytableau}
1 & 3 \\
2^\circ & 4^\circ
\end{ytableau}
}
\caption[Growth for the permutation $(4^\circ, 3^\circ, 2^\circ, 1^\circ)$ for the ``jitter'' algorithm]%
{Growth for the permutation $(4^\circ, 3^\circ, 2^\circ, 1^\circ)$ for the ``jitter'' algorithm:
(a) growth diagram, (b) $P$ tableau, and (c) $Q$ tableau}
\label{fig:growth-jitter-4}
\end{figure}

\section{Shifted tableaux algorithms}

Further instantiations of the theory use \textit{shifted tableaux}.
For these instantiations,
$B$ is the ``octant'' of the plane,
$\mbbO = \{(i, j) \textrm{ where } i, j \in \mbbP \textrm{ and } i > j\}$,
with the same order relation as the quadrant,
\begin{equation}
(i,j) \le (k,l) \longequiv i \le k \textrm{ and } j \le l \textrm{
    for all } (i,j), (k,l) \in \mbbO
\end{equation}
Like the quadrant, we display the octant in ``English format'',
with the locations of the points displayed as in
figure~\ref{fig:octant}.
\begin{figure}
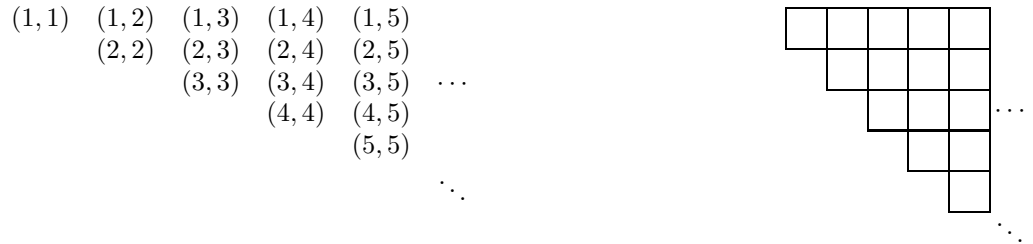

\hfill
\raisebox{-2.5em}{$\begin{array}{cccccc}
  (1,1) & (1,2) & (1,3) & (1,4) & (1,5) & \\
        & (2,2) & (2,3) & (2,4) & (2,5) & \\
        &       & (3,3) & (3,4) & (3,5) & \cdots \\
        &       &       & (4,4) & (4,5) & \\
        &       &       &       & (5,5) & \\
        &       &       &       &       & \ddots
\end{array}$}
\hfill
\begin{ytableau}
\relax & & & & & \none \\
\none & & & & & \none \\
\none & \none & & & & \none[\cdots] \\
\none & \none & \none & & & \none \\
\none & \none & \none & \none & & \none \\
\noalign{\vskip 0.2em}
\none & \none & \none & \none & \none & \none[\ddots]
\end{ytableau}
\hfill
\caption{The points of the octant $\mbbO$ with and without coordinates}
\label{fig:octant}
\end{figure}
The lattice of diagrams $V$ is the lattice of shifted Young diagrams,
$\mbbSY$.
Example \textit{shifted Young tableaux} are shown in
figure~\ref{fig:shifted-young-tableaux}.
\begin{figure}
\hbox to \linewidth{\hfill
\begin{tikzcd}[baseline=(\tikzcdmatrixname-2-1.south), cramped, sep=small, execute at end picture={
    \horizlinex{1}{2}{7}
    \vertline{2}{1}
    \horizline{2}{2}
    \vertline{3}{2}
    \horizline{3}{3}
    \vertline{4}{3}
    \horizline{4}{4}
    \vertline{5}{4}
    \horizline{5}{5}
    \vertline{6}{5}
    \horizline{6}{6}
}]
\null & \null & \null  & \null  & \null  & \null & \hskip 1em & \null \\
\null & 1 & 2 & 4 & 5 & 7 & \null \\
\null & \null & 3 & 6 & 8 & \null & \\
\null & \null & \null & 7 & 9 & \null &   \\
\null & \null & \null & \null & 10 & \null &   \\
\null & \null & \null & \null & \null & \null & \null \\
\null & \null & \null & \null & \null & \null &
\end{tikzcd}
\hfill
\begin{ytableau}
1 & 2 & 4 & 5 & 7 \\
\none & 3 & 6 & 8 \\
\none & \none & 7 & 9 \\
\none & \none & \none & 10
\end{ytableau}
\hfill}
\caption{Shifted Young tableaux}
\label{fig:shifted-young-tableaux}
\end{figure}
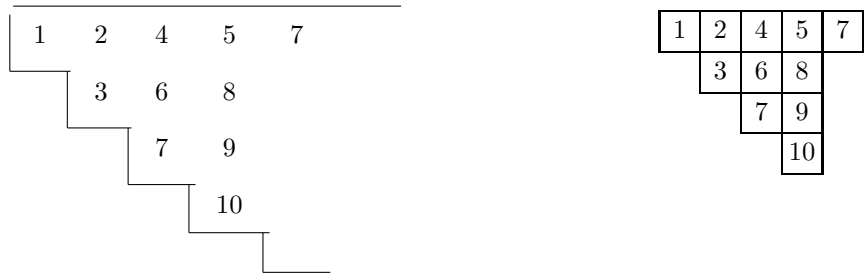

The only weight function on this $B$  compatible with differential
degree $r = 1$ is
\cite{Stan1990a}*{sec.~3}\cite{Fom1994a}*{Exam.~2.2.8}
\begin{equation}\label{eq:shifted-weight}
w(p) = \begin{cases}
  1 & \text{if $p$ is on the diagonal} \\
  2 & \text{otherwise}
  \end{cases}
\end{equation}
The Hasse diagram of the octant with the points labeled with their
weights is figure~\ref{fig:octant-weights}.
\begin{figure}
\begin{tikzcd}[column sep=0.5em,row sep=1em,every arrow/.append style={dash}]
\null \ar[dr, shorten <=2mm] && \null \ar[dl, shorten <=2mm] \ar[dr, shorten <=2mm] && \null \ar[dl, shorten <=2mm] \ar[dr, shorten <=2mm] \\
&\bullet_2 \ar[dr] && \bullet_2 \ar[dl] \ar[dr] && \bullet_1 \ar[dl] \\
&&\bullet_2 \ar[dr] && \bullet_2 \ar[dl] \ar[dr] \\
&&& \bullet_2 \ar[dr] && \bullet_1 \ar[dl] \\
&&&& \bullet_2 \ar[dr] \\
&&&&& \bullet_1
\end{tikzcd}
\caption[The Hasse diagram of the octant $\mbbO$]%
{The Hasse diagram of the octant $\mbbO$ with weights $w(p) =$ 1
  for diagonal elements and 2 for other elements}
\label{fig:octant-weights}
\end{figure}
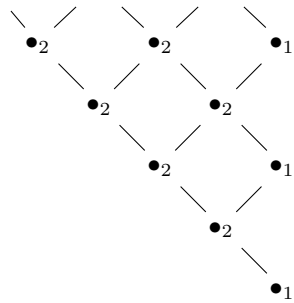
The dual graded graph with the edges labeled with their multiplicities
in $G_2$ is shown in figure~\ref{fig:strict-graph-weighted}.
\begin{figure}
\hbox to \linewidth{\hfill
\begin{tikzcd}[column sep=0.5em,row sep=1em,every arrow/.append style={dash}]
&\null \ar[dl, shorten <=2mm] \ar[dr, shorten <=2mm] & \null \ar[d, shorten <=1mm] & \null \ar[dl, shorten <=2mm] \ar[d, shorten <=1mm] & \null \ar[dl, shorten <=2mm] \ar[dr, shorten <=2mm] && \null \ar[dl, shorten <=2mm] \\
321 \ar[dr, "1"] && 42 \ar[dl, "2"] \ar[d, "2"] & 51 \ar[dl, "2"] \ar[dr, "1"] && 6 \ar[dl, "2"] \\
& 32 \ar[d, "2"] & 41 \ar[dl, "2"] \ar[dr, "1"] && 5 \ar[dl, "2"] \\
& 31 \ar[dl, "2"] \ar[dr, "1"] && 4 \ar[dl, "2"] \\
21 \ar[dr, "1"] && 3 \ar[dl, "2"] \\
& 2 \ar[dl, "2"] \\
1 \ar[d, "1"] \\
\emptyset
\end{tikzcd}
\hfill
\begin{tikzcd}[column sep=0.5em,row sep=1em,every arrow/.append style={dash}]
&\null \ar[dl, equal, shorten <=2mm] \ar[dr, shorten <=2mm] & \null \ar[d, equal, shorten <=1mm] & \null \ar[dl, equal, shorten <=2mm] \ar[d, equal, shorten <=1mm] & \null \ar[dl, equal, shorten <=2mm] \ar[dr, shorten <=2mm] && \null \ar[dl, equal, shorten <=2mm] \\
321 \ar[dr] && 42 \ar[dl, equal] \ar[d, equal] & 51 \ar[dl, equal] \ar[dr] && 6 \ar[dl, equal] \\
& 32 \ar[d, equal] & 41 \ar[dl, equal] \ar[dr] && 5 \ar[dl, equal] \\
& 31 \ar[dl, equal] \ar[dr] && 4 \ar[dl, equal] \\
21 \ar[dr] && 3 \ar[dl, equal] \\
& 2 \ar[dl, equal] \\
1 \ar[d] \\
\emptyset
\end{tikzcd}
\hfill}
\caption[The lattice of strict partitions $\mbbSY$]%
{The graph of strict partitions $\mbbSY$ (a) with edges labeled with
  their weights/multiplicities and (b) with edges shown multiply}
\label{fig:strict-graph-weighted}
\end{figure}
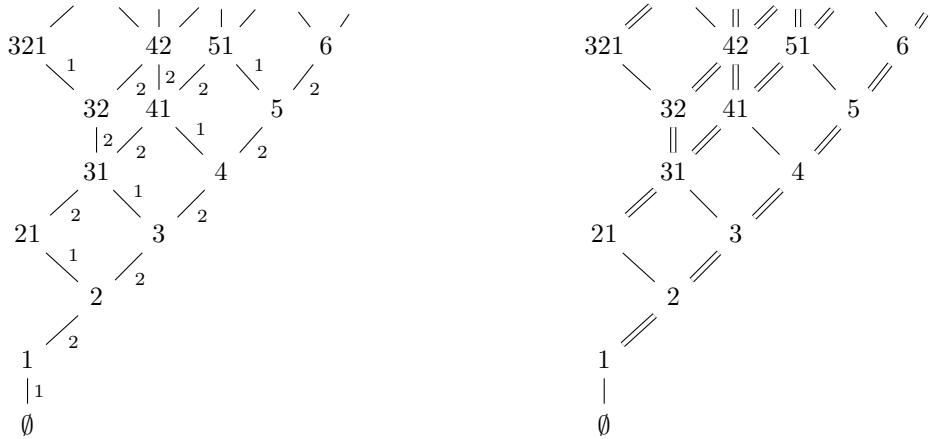

For colored shifted Young tableaux, the
the main-diagonal points have weight 1 and so their
values always have the same color.  The off-diagonal points
have weight 2 and so their values can have two colors.  In
\cite{Sag1979b} the values of point with color 2 are made members of a set of
distinguished values.  In other works dealing with shifted tableaux
the values with color 2 are considered
marked in some manner, usually as being \textit{circled}.
Here we show off-diagonal points with color 2 as
``circled'' with a superscript ``degree'' symbol:  $\cdot^\circ$
(resembling \cite{Haim1989a}*{sec.~2}).

\paragraph{Sagan's first algorithm for shifted tableaux}
\label{sec:sagan}
One insertion algorithm for shifted tableaux is Sagan's first algorithm
\cite{Sag1979b}.  
A thumbnail description is:
\begin{enumerate}
\item A number being inserted into the $P$ tableau is inserted
into the first row, with any (larger) number that it bumps being
inserted into the second row, etc. as for row insertion into unshifted
tableaux.
\item However, if a number is inserted into the main diagonal
cell of a row and bumps a number, that larger number begins
another sequence of row insertions, starting with the first row, but
now with the constraint that if a number would be inserted into a main
diagonal cell, it instead is returned to the first row to start a new
sequence of row insertions.
\end{enumerate}

This insertion algorithm can be summarized straightforwardly in the insertion
diagrams shown in figure~\ref{fig:psi-sagan1}.  They show
directly that the insertion algorithm is invertible and that
\ref{eq:weight} is satisfied for $r = 1$.
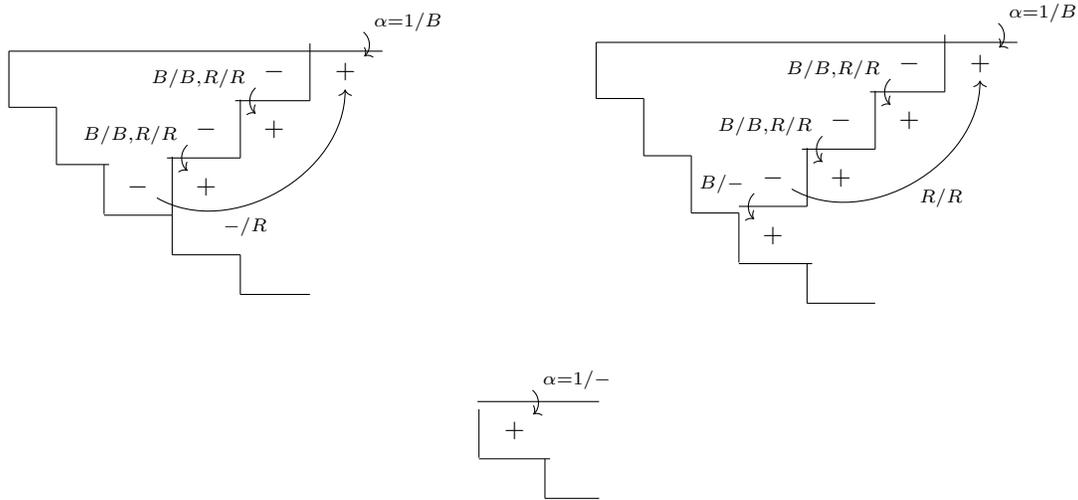
\begin{figure}
\hbox to \linewidth{\hfill
\begin{tikzcd}[cramped, sep=small, execute at end picture={
    \horizlinex{1}{2}{7}
    \vertline{2}{1}
    \horizline{2}{2}
    \vertline{3}{2}
    \horizline{3}{3}
    \vertline{4}{3}
    \horizline{4}{4}
    \vertline{5}{4}
    \horizline{5}{5}
    \vertline{6}{5}
    \horizline{6}{6}
    \vertline{2}{6}
    \horizline{2}{6}
    \vertline{3}{5}
    \horizline{3}{5}
    \vertline{4}{4}
}]
\null & \null & \null  & \null  & \null  & \null & \hskip 1em \ar[d, "\alpha=1/B" near start, bend left=50] & \null \\
\null & \null & \null  & \null  & \null & - \ar[d, bend right=50, "{B/B,R/R}"' near start] & + \\
\null & \null & \null  & \null  & - \ar[d, bend right=50, "{B/B,R/R}"' near start] & + &   \\
\null & \null & \null  & - \ar[uurrr, bend right=60, "{-/R}"' near start] & + & \null &   \\
\null & \null & \null & \null & \null & \null &   \\
\null & \null & \null & \null & \null & \null & \null \\
\null & \null & \null & \null & \null & \null &   \\
\null & \null & \null & \null & \null & \null &
\end{tikzcd}
\hfill
\begin{tikzcd}[cramped, sep=small, execute at end picture={
    \horizlinex{1}{2}{8}
    \vertline{2}{1}
    \horizline{2}{2}
    \vertline{3}{2}
    \horizline{3}{3}
    \vertline{4}{3}
    \horizline{4}{4}
    \vertline{5}{4}
    \horizline{5}{5}
    \vertline{6}{5}
    \horizline{6}{6}
    \vertline{2}{7}
    \horizline{2}{7}
    \vertline{3}{6}
    \horizline{3}{6}
    \vertline{4}{5}
    \horizline{4}{5}
}]
\null & \null & \null & \null  & \null  & \null  & \null & \hskip 1em \ar[d, "\alpha=1/B" near start, bend left=50] & \null \\
\null & \null & \null & \null  & \null  & \null & - \ar[d, bend right=50, "{B/B,R/R}"' near start] & + \\
\null & \null & \null & \null  & \null  & - \ar[d, bend right=50, "{B/B,R/R}"' near start] & + &   \\
\null & \null & \null & \null  & - \ar[d, bend right=50, "{B/-}"' near start] \ar[uurrr, bend right=60, "{R/R}"'] & + & \null &   \\
\null & \null & \null & \null & + & \null & \null &   \\
\null & \null & \null & \null & \null & \null & \null & \null \\
\null & \null & \null & \null & \null & \null & \null &   \\
\null & \null & \null & \null & \null & \null & \null &
\end{tikzcd}
\hfill}
\begin{tikzcd}[cramped, sep=small, execute at end picture={
    \horizlinex{1}{2}{3}
    \vertline{2}{1}
    \horizline{2}{2}
    \vertline{3}{2}
    \horizline{3}{3}
}]
\null & \hskip 1em \ar[d, "\alpha=1/-" near start, bend left=50] & \null & \null \\
\null & + & \null & \null \\
\null & \null & \null & \null \\
\null & \null & \null & \null
\end{tikzcd}
\caption[Generic insertion diagrams for Sagan's first algorithm]%
{Generic insertion diagrams for Sagan's first
  algorithm: (a) last part of $x = 1$, (b) last part of $x > 1$, and
  (c) $x = \emptyset$, which is a special case of (b)}
\label{fig:psi-sagan1}
\end{figure}
Note these points about the insertion diagram:
\begin{enumerate}
\item
  There are two distinct generic insertion diagrams, because the
  pattern of insertion and deletion points differs depending on
  whether the shape has a (last) row with length 1 or not.
  (The shapes that do not are $\emptyset$ and shapes with a last row
  with length $> 1$.)  The third insertion diagram shows how the
  special case $\emptyset$ works out.
\item
  The points on the main diagonal have weight 1, and so all edges to
  such points in $G_2$ have the same color.  Following
  \cite{Fom1995a}*{sec.~4.5}, we call the color of those edges
  ``black'', which we abbreviate ``$-$'' in the insertion diagrams.
\item
  The points not on the main diagonal have weight 2.  Each such point
  has two edges in $G_2$, which again following
  \cite{Fom1995a}*{sec.~4.5}, we call the color of those edges
  ``blue'' and ``red'', which we abbreviate ``$B$'' and ``$R$'' in the
  insertion diagrams.
\item
  From some of the deletion points, the blue and red edges lead to the
  same insertion point, so a single arrow is shown with the two
  ``before/after'' color specifications combined.
\item
  It is a straightforward visual exercise to verify that these
  insertion diagrams satisfy the constraints required to specify a
  valid $\psi$.
\item
  The insertion diagram shows the general pattern of bumps generated
  by an insertion:  The ``$\alpha = \cdot$'' edge shows that the
  element is inserted into the first row as a blue insertion.
  Successive bumps are blue insertions into successive rows until a
  main diagonal element is bumped, at which point the bumped element
  is inserted into the first row as a \textit{red} insertion.  Bumps
  caused by red insertions remain red insertions, and never are inserted as
  main diagonal elements.
\end{enumerate}

The growth for the example permutation $(1,2,5,4,3)$ and the resulting
tableaux
are shown in figure~\ref{fig:growth-sagan}.
\begin{figure}
\growthtableauxh{%
\begin{tikzcd}[ampersand replacement=\&,sep=small]
\emptyset \ar[rr]\ar[dd] \&\&
  1 \ar[rr]\ar[dd] \&\&
  2 \ar[rr]\ar[dd] \&\&
  3 \ar[rr]\ar[dd,"B"] \&\&
  31 \ar[rr]\ar[dd,"-"] \&\&
  41 \ar[dd,"R"] \\
\& \&\& \&\& X \&\& \&\& \& \\
\emptyset \ar[rr]\ar[dd] \&\&
  1 \ar[rr]\ar[dd] \&\&
  2 \ar[rr]\ar[dd] \&\&
  2 \ar[rr]\ar[dd] \&\&
  3 \ar[rr]\ar[dd,"B"] \&\&
  31 \ar[dd,"-"] \\
\& \&\& \&\& \&\& X \&\& \& \\
\emptyset \ar[rr]\ar[dd] \&\&
  1 \ar[rr]\ar[dd] \&\&
  2 \ar[rr]\ar[dd] \&\&
  2 \ar[rr]\ar[dd] \&\&
  2 \ar[rr]\ar[dd] \&\&
  3 \ar[dd,"B"] \\
\& \&\& \&\& \&\& \&\& X \& \\
\emptyset \ar[rr]\ar[dd] \&\&
  1 \ar[rr]\ar[dd] \&\&
  2 \ar[rr]\ar[dd,"B"] \&\&
  2 \ar[rr]\ar[dd,"B"] \&\&
  2 \ar[rr]\ar[dd,"B"] \&\&
  2 \ar[dd,"B"] \\
\& \&\& X \&\& \&\& \&\& \& \\
\emptyset \ar[rr]\ar[dd] \&\&
  1 \ar[rr]\ar[dd,"-"] \&\&
  1 \ar[rr]\ar[dd,"-"] \&\&
  1 \ar[rr]\ar[dd,"-"] \&\&
  1 \ar[rr]\ar[dd,"-"] \&\&
  1 \ar[dd,"-"] \\
\& X \&\& \&\& \&\& \&\& \& \\
\emptyset \ar[rr] \&\&
  \emptyset \ar[rr] \&\&
  \emptyset \ar[rr] \&\&
  \emptyset \ar[rr] \&\&
  \emptyset \ar[rr] \&\&
  \emptyset
\end{tikzcd}
}{%
\begin{ytableau}
1 & 2 & 3 & 5 \\
\none & 4
\end{ytableau}
}{%
\begin{ytableau}
1 & 2 & 3 & 5^\circ \\
\none & 4
\end{ytableau}
}
\caption[Growth for the permutation $(1,2,5,4,3)$ for Sagan's first algorithm]%
{Growth for the permutation $(1,2,5,4,3)$ for Sagan's first algorithm:
(a) growth diagram, (b) $P$ tableau, and (c) $Q$ tableau}
\label{fig:growth-sagan}
\end{figure}
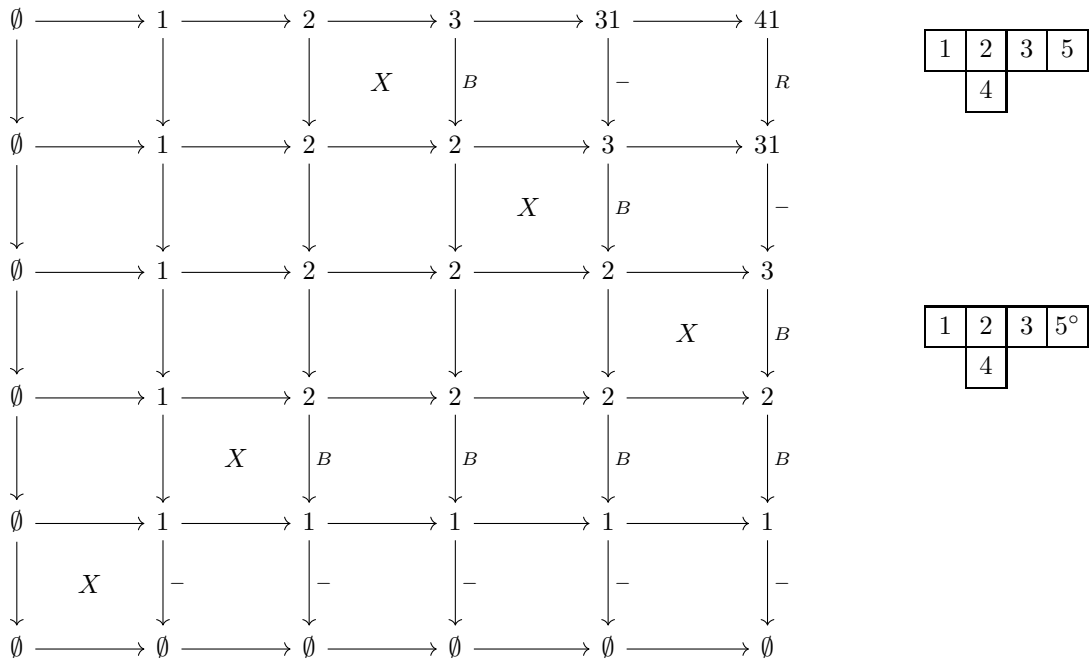

\paragraph{The second algorithm for shifted tableaux}
\label{sec:shifted}
A second insertion algorithm for shifted tableaux was discovered
independently\footnote{And nearly simultaneously:  A letter from Sagan
to Richard Stanley reporting his discovery crossed in the mail with a
letter from Stanley to Sagan reporting Worley's discovery.}
by Worley \cite{Wor1984a} and Sagan \cite{Sag1987a}.
A thumbnail description is:
\begin{enumerate}
\item A number being inserted into the $P$
is inserted
into the first row, with any (larger) number that it bumps being
inserted into the second row, etc. as for row insertion into unshifted
tableaux.
\item However, if a number is inserted into the main diagonal
cell of a row and bumps a number, that larger number begins
sequence of column insertions, starting with the first column
rightward of the diagonal cell, continuing rightward until an
insertion is done into an empty cell.
\end{enumerate}

This insertion algorithm can be summarized straightforwardly in the insertion
diagrams shown in figure~\ref{fig:psi-worley}.  The diagrams show
directly that the insertion algorithm is invertible and that
\ref{eq:weight} is satisfied for $r = 1$.
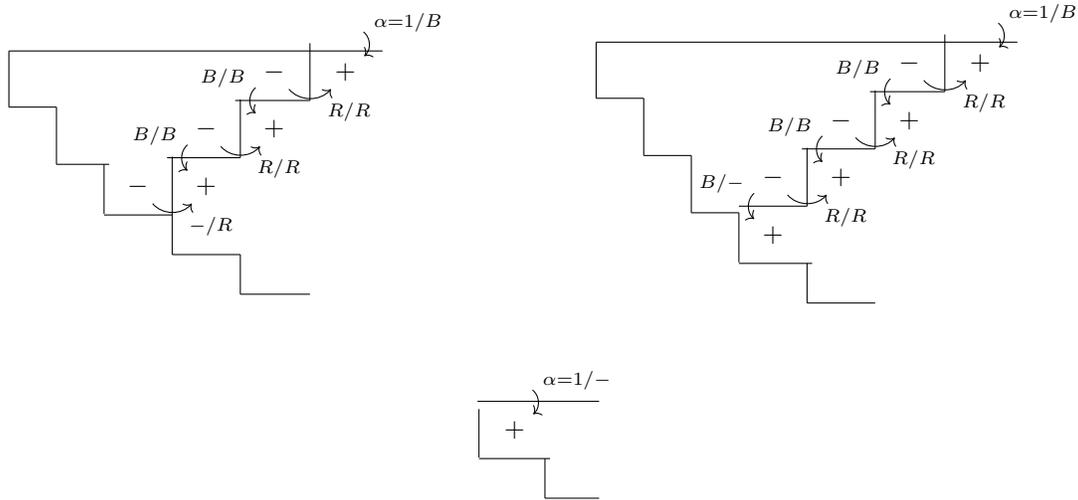
\begin{figure}
\hbox to \linewidth{\hfill
\begin{tikzcd}[cramped, sep=small, execute at end picture={
    \horizlinex{1}{2}{7}
    \vertline{2}{1}
    \horizline{2}{2}
    \vertline{3}{2}
    \horizline{3}{3}
    \vertline{4}{3}
    \horizline{4}{4}
    \vertline{5}{4}
    \horizline{5}{5}
    \vertline{6}{5}
    \horizline{6}{6}
    \vertline{2}{6}
    \horizline{2}{6}
    \vertline{3}{5}
    \horizline{3}{5}
    \vertline{4}{4}
}]
\null & \null & \null  & \null  & \null  & \null & \hskip 1em \ar[d, "\alpha=1/B" near start, bend left=50] & \null \\
\null & \null & \null  & \null  & \null & - \ar[d, bend right=50, "B/B"' near start] \ar[r, bend right=50, "R/R"' near end] & + \\
\null & \null & \null  & \null  & - \ar[d, bend right=50, "B/B"' near start] \ar[r, bend right=50, "R/R"' near end] & + &   \\
\null & \null & \null  & - \ar[r, bend right=50, "-/R"' near end] & + & \null &   \\
\null & \null & \null & \null & \null & \null &   \\
\null & \null & \null & \null & \null & \null & \null \\
\null & \null & \null & \null & \null & \null &   \\
\null & \null & \null & \null & \null & \null &
\end{tikzcd}
\hfill
\begin{tikzcd}[cramped, sep=small, execute at end picture={
    \horizlinex{1}{2}{8}
    \vertline{2}{1}
    \horizline{2}{2}
    \vertline{3}{2}
    \horizline{3}{3}
    \vertline{4}{3}
    \horizline{4}{4}
    \vertline{5}{4}
    \horizline{5}{5}
    \vertline{6}{5}
    \horizline{6}{6}
    \vertline{2}{7}
    \horizline{2}{7}
    \vertline{3}{6}
    \horizline{3}{6}
    \vertline{4}{5}
    \horizline{4}{5}
}]
\null & \null & \null & \null  & \null  & \null  & \null & \hskip 1em \ar[d, "\alpha=1/B" near start, bend left=50] & \null \\
\null & \null & \null & \null  & \null  & \null & - \ar[d, bend right=50, "B/B"' near start] \ar[r, bend right=50, "R/R"' near end] & + \\
\null & \null & \null & \null  & \null  & - \ar[d, bend right=50, "B/B"' near start] \ar[r, bend right=50, "R/R"' near end] & + &   \\
\null & \null & \null & \null  & - \ar[d, bend right=50, "{B/-}"' near start] \ar[r, bend right=50, "R/R"' near end] & + & \null &   \\
\null & \null & \null & \null & + & \null & \null &   \\
\null & \null & \null & \null & \null & \null & \null & \null \\
\null & \null & \null & \null & \null & \null & \null &   \\
\null & \null & \null & \null & \null & \null & \null &
\end{tikzcd}
\hfill}
\begin{tikzcd}[cramped, sep=small, execute at end picture={
    \horizlinex{1}{2}{3}
    \vertline{2}{1}
    \horizline{2}{2}
    \vertline{3}{2}
    \horizline{3}{3}
}]
\null & \hskip 1em \ar[d, "\alpha=1/-" near start, bend left=50] & \null & \null \\
\null & + & \null & \null \\
\null & \null & \null & \null \\
\null & \null & \null & \null
\end{tikzcd}
\caption[Generic insertion diagrams for the second shifted algorithm]%
{Generic insertion diagrams for the second shifted
  algorithm: (a) last part of $x = 1$, (b) last part of $x > 1$, and
  (c) $x = \emptyset$, which is a special case of (b)}
\label{fig:psi-worley}
\end{figure}

The growth for the example permutation $(1,2,5,4,3)$ and the resulting
tableaux
are shown in figure~\ref{fig:growth-worley}.
Note that the only difference between the growth diagram
figure~\ref{fig:growth-worley} and the growth diagram for that
permutation using the first algorithm
figure~\ref{fig:growth-sagan} is the northeasternmost node, which is
32 rather than 41.
\begin{figure}
\growthtableauxh{%
\begin{tikzcd}[ampersand replacement=\&,sep=small]
\emptyset \ar[rr]\ar[dd] \&\&
  1 \ar[rr]\ar[dd] \&\&
  2 \ar[rr]\ar[dd] \&\&
  3 \ar[rr]\ar[dd,"B"] \&\&
  31 \ar[rr]\ar[dd,"-"] \&\&
  32 \ar[dd,"R"] \\
\& \&\& \&\& X \&\& \&\& \& \\
\emptyset \ar[rr]\ar[dd] \&\&
  1 \ar[rr]\ar[dd] \&\&
  2 \ar[rr]\ar[dd] \&\&
  2 \ar[rr]\ar[dd] \&\&
  3 \ar[rr]\ar[dd,"B"] \&\&
  31 \ar[dd,"-"] \\
\& \&\& \&\& \&\& X \&\& \& \\
\emptyset \ar[rr]\ar[dd] \&\&
  1 \ar[rr]\ar[dd] \&\&
  2 \ar[rr]\ar[dd] \&\&
  2 \ar[rr]\ar[dd] \&\&
  2 \ar[rr]\ar[dd] \&\&
  3 \ar[dd,"B"] \\
\& \&\& \&\& \&\& \&\& X \& \\
\emptyset \ar[rr]\ar[dd] \&\&
  1 \ar[rr]\ar[dd] \&\&
  2 \ar[rr]\ar[dd,"B"] \&\&
  2 \ar[rr]\ar[dd,"B"] \&\&
  2 \ar[rr]\ar[dd,"B"] \&\&
  2 \ar[dd,"B"] \\
\& \&\& X \&\& \&\& \&\& \& \\
\emptyset \ar[rr]\ar[dd] \&\&
  1 \ar[rr]\ar[dd,"-"] \&\&
  1 \ar[rr]\ar[dd,"-"] \&\&
  1 \ar[rr]\ar[dd,"-"] \&\&
  1 \ar[rr]\ar[dd,"-"] \&\&
  1 \ar[dd,"-"] \\
\& X \&\& \&\& \&\& \&\& \& \\
\emptyset \ar[rr] \&\&
  \emptyset \ar[rr] \&\&
  \emptyset \ar[rr] \&\&
  \emptyset \ar[rr] \&\&
  \emptyset \ar[rr] \&\&
  \emptyset
\end{tikzcd}
}{%
\begin{ytableau}
1 & 2 & 3 \\
\none & 4 & 5
\end{ytableau}
}{%
\begin{ytableau}
1 & 2 & 3 \\
\none & 4 & 5^\circ
\end{ytableau}
}
\caption[Growth for the permutation $(1,2,5,4,3)$ for the second shifted
algorithm]%
{Growth for the permutation $(1,2,5,4,3)$ for the second shifted
algorithm:
(a) growth diagram, (b) $P$ tableau, and (c) $Q$ tableau}
\label{fig:growth-worley}
\end{figure}
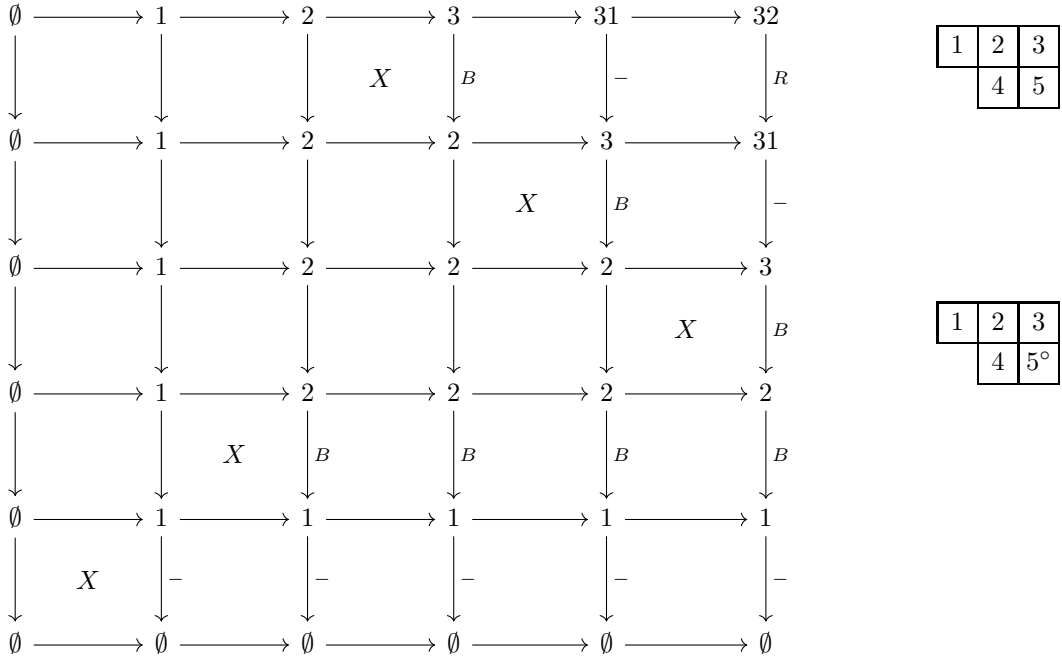

\section{Biweighted algorithms}
\label{sec:biweighted}

We can extend the weight function $w$ and the construction of the
ascending and descending graphs $G_1$ and $G_2$ by allowing multiple
edges in $G_1$ as well as $G_2$.
Thus, in ``biweighting'', we have two weight functions $w_1,
w_2: B \rightarrow \mbbP$, and, as before, a differential degree
$r \in \mbbP$.
Maintaining the properties of the $U$ and $D$ operators
in \cite{Fom1994a}*{sec.~2.2} so as to preserve the enumerative
properties of the machinery requires:
\begin{equation}\label{eq:biweight}
\sum_{\substack{y \in V \\ y \lessdot x}} w_1(x-y) w_2(x-y) + r =
\sum_{\substack{z \in V \\ x \lessdot z}} w_1(z-x) w_2(x-y)
\textrm{ for all } x \in V
\end{equation}
Thus, from any valid biweight functions, we can construct a valid
weight $w(p) = w_1(p) w_2(p)$
and any valid weight function can be factored into valid biweight functions.
Similarly, any valid weight function, paired with $w_1(p) = 1$ becomes
a valid biweight.

If we use the weight functions to specify the multiplicity of edges in
$G_1$ and $G_2$, then this factoring is more restrictive than we might
think at first.  For example, the weight function for shifted tableaux
\ref{eq:shifted-weight}, since it has values of 1 and 2, is difficult
to factor into integral $w_1$ and $w_2$ that are both nontrivial and
reasonably uniform over $B$.

The insertion diagrams for biweighted insertion algorithms are similar
to those for other algorithms.
The labels on the arrows now show
\textit{four} colors:  $a_1a_2/b_1b_2$ (listing the colors of the
the edges of a cell labeled as
shown in figure~\ref{fig:cell-convention}), with both the ``before'' and
``after'' parts giving the $G_1$ label before the $G_2$ label.  
See e.g.~figure~\ref{fig:psi-mixed}.

Extending coloring to $G_1$ allows us to extend both inversion
duality and transpose duality to include a systematic change to the
colors of the $G_1$ edges and the elements of $P$ in the
straightforward way.

\paragraph{Haiman's mixed insertion}
One instantiation of a biweight (that is not a weight) is Haiman's
``mixed insertion'' \cite{Haim1989a}*{sec.~3}, with $r=2$, $w_1(p)=2$,
and $w_2(p)=1$.  The elements of the generalized permutation have two
colors; $\alpha = 1$ is called uncircled and $\alpha = 2$ is called
circled.  The insertion process is \cite{Haim1989a}*{Def.~3.1}:
\begin{quote}
If $w_i$ is not circled, insert $w_i$ into the first row of $T_i$; if
it is circled,
into the first column. As each subsequent element $x$ of $T_i$ is bumped
by an insertion, insert $x$ into the row immediately below if it is
not circled, or into the column immediately to its right if it is circled.
Continue until an insertion takes place at the end of a row or
column, bumping no new element. As in ordinary Schensted insertion,
this process terminates because (by definition) each $x$ bumps
an element greater than itself. It is easy to see that the result of the
insertion process is again a tableau.
\end{quote}
The generic insertion diagram for mixed insertion is shown in
figure~\ref{fig:psi-mixed}.
All $G_2$ labels are the same and are denoted ``$-$''.
Comparing figure~\ref{fig:psi-mixed} with
figure~\ref{fig:psi-left-right} shows that they are inversion-duals of
each other.
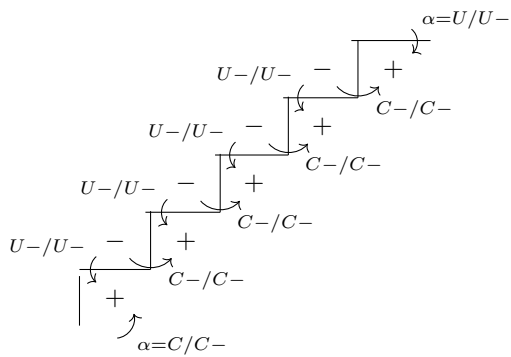
\begin{figure}
\begin{tikzcd}[cramped, sep=small, execute at end picture={
    \horizline{1}{6}
    \vertline{2}{5}
    \horizline{2}{5}
    \vertline{3}{4}
    \horizline{3}{4}
    \vertline{4}{3}
    \horizline{4}{3}
    \vertline{5}{2}
    \horizline{5}{2}
    \vertline{6}{1}
}]
\null & \null  & \null  & \null  & \null & \hskip 1em \ar[d, "\alpha=U/U-" near start, bend left=50] & \null \\
\null & \null  & \null  & \null & - \ar[d, "U-/U-"' near start, bend right=50] \ar[r, "C-/C-"' near end, bend right=50] & + \\
\null & \null  & \null  & - \ar[d, "U-/U-"' near start, bend right=50] \ar[r, "C-/C-"' near end, bend right=50] & + &   \\
\null & \null  & - \ar[d, "U-/U-"' near start, bend right=50] \ar[r, "C-/C-"' near end, bend right=50] & + &   &   \\
\null & - \ar[d, "U-/U-"' near start, bend right=50] \ar[r, "C-/C-"' near end, bend right=50] & + &   &   &   \\
\null & + &   &   &   &   \\
\null & \hskip 1em \ar[u, "\alpha=C/C-"', bend right=50] &   &   &   &
\end{tikzcd}
\caption{Generic insertion diagram for mixed insertion.}
\label{fig:psi-mixed}
\end{figure}

Because mixed insertion is the inversion-dual of left-right insertion,
and left-right insertion is transpose self-dual (with interchange of
``$C$'' and ``$U$''), mixed insertion is transpose self-dual in the
same way.

An example of mixed insertion as a growth diagram is
figure~\ref{fig:growth-mixed}.
The generalized permutation in the example is the inverse of the
generalized permutation in the left-right insertion in
figure~\ref{fig:growth-left-right}, so the
two growth diagrams are transposes of each other.
Note that since $w_2(p) = 1$, the labels on $G_2$ edges are not shown.
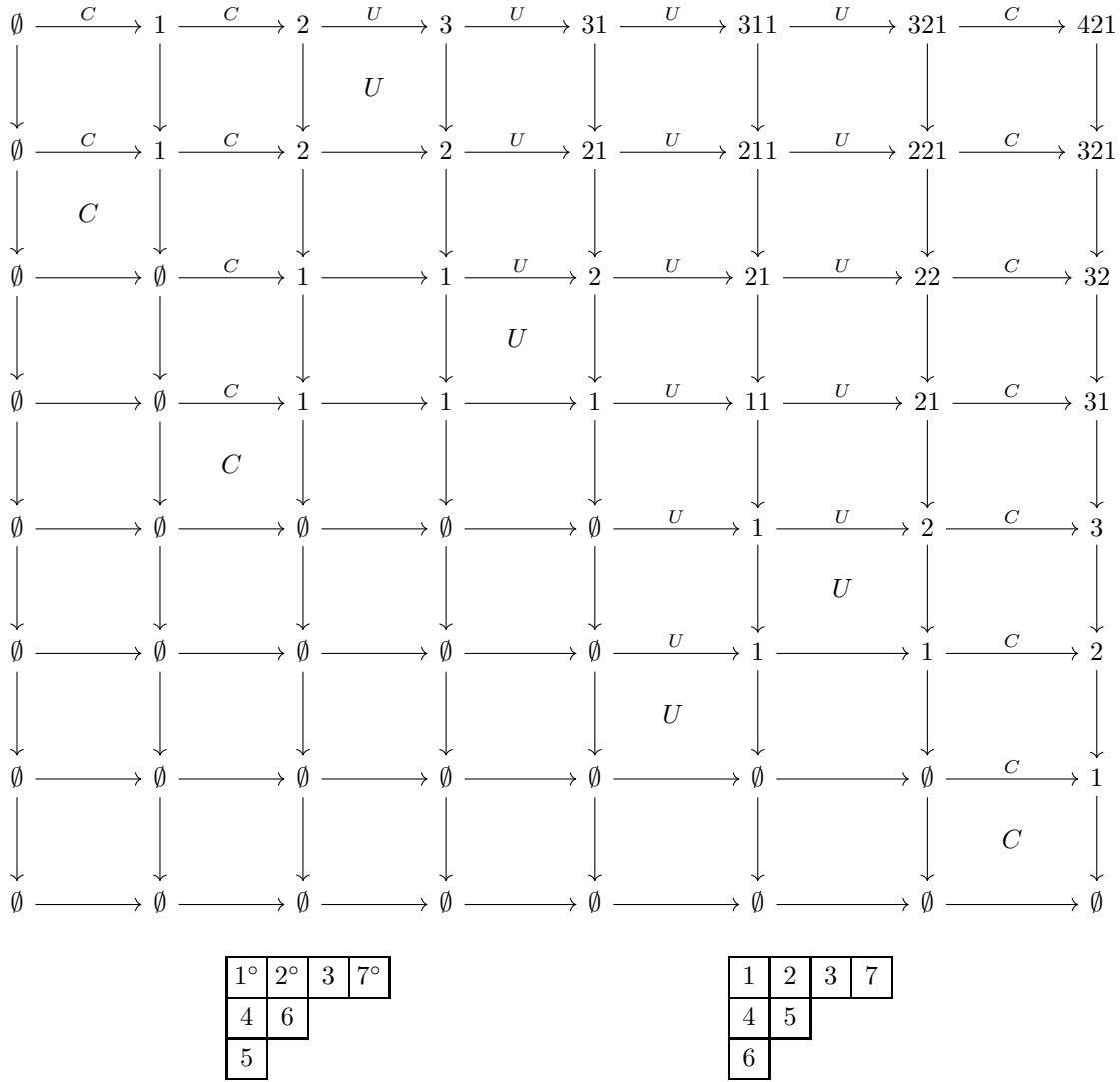
\begin{figure}
\growthtableauxv{%
\begin{tikzcd}[ampersand replacement=\&,sep=small]
\emptyset \ar[rr,"C"]\ar[dd] \&\&
  1 \ar[rr,"C"]\ar[dd] \&\&
  2 \ar[rr,"U"]\ar[dd] \&\&
  3 \ar[rr,"U"]\ar[dd] \&\&
  31 \ar[rr,"U"]\ar[dd] \&\&
  311 \ar[rr,"U"]\ar[dd] \&\&
  321 \ar[rr,"C"]\ar[dd] \&\&
  421 \ar[dd] \\
\&  \&\&  \&\& U \&\&  \&\&  \&\&  \&\&  \& \\
\emptyset \ar[rr,"C"]\ar[dd] \&\&
  1 \ar[rr,"C"]\ar[dd] \&\&
  2 \ar[rr]\ar[dd] \&\&
  2 \ar[rr,"U"]\ar[dd] \&\&
  21 \ar[rr,"U"]\ar[dd] \&\&
  211 \ar[rr,"U"]\ar[dd] \&\&
  221 \ar[rr,"C"]\ar[dd] \&\&
  321 \ar[dd] \\
\& C \&\&  \&\&  \&\&  \&\&  \&\&  \&\&  \& \\
\emptyset \ar[rr]\ar[dd] \&\&
  \emptyset \ar[rr,"C"]\ar[dd] \&\&
  1 \ar[rr]\ar[dd] \&\&
  1 \ar[rr,"U"]\ar[dd] \&\&
  2 \ar[rr,"U"]\ar[dd] \&\&
  21 \ar[rr,"U"]\ar[dd] \&\&
  22 \ar[rr,"C"]\ar[dd] \&\&
  32 \ar[dd] \\
\&  \&\&  \&\&  \&\& U \&\&  \&\&  \&\&  \& \\
\emptyset \ar[rr]\ar[dd] \&\&
  \emptyset \ar[rr,"C"]\ar[dd] \&\&
  1 \ar[rr]\ar[dd] \&\&
  1 \ar[rr]\ar[dd] \&\&
  1 \ar[rr,"U"]\ar[dd] \&\&
  11 \ar[rr,"U"]\ar[dd] \&\&
  21 \ar[rr,"C"]\ar[dd] \&\&
  31 \ar[dd] \\
\&  \&\& C \&\&  \&\&  \&\&  \&\&  \&\&  \& \\
\emptyset \ar[rr]\ar[dd] \&\&
  \emptyset \ar[rr]\ar[dd] \&\&
  \emptyset \ar[rr]\ar[dd] \&\&
  \emptyset \ar[rr]\ar[dd] \&\&
  \emptyset \ar[rr,"U"]\ar[dd] \&\&
  1 \ar[rr,"U"]\ar[dd] \&\&
  2 \ar[rr,"C"]\ar[dd] \&\&
  3 \ar[dd] \\
\&  \&\&  \&\&  \&\&  \&\&  \&\& U \&\&  \& \\
\emptyset \ar[rr]\ar[dd] \&\&
  \emptyset \ar[rr]\ar[dd] \&\&
  \emptyset \ar[rr]\ar[dd] \&\&
  \emptyset \ar[rr]\ar[dd] \&\&
  \emptyset \ar[rr,"U"]\ar[dd] \&\&
  1 \ar[rr]\ar[dd] \&\&
  1 \ar[rr,"C"]\ar[dd] \&\&
  2 \ar[dd] \\
\&  \&\&  \&\&  \&\&  \&\& U \&\&  \&\&  \& \\
\emptyset \ar[rr]\ar[dd] \&\&
  \emptyset \ar[rr]\ar[dd] \&\&
  \emptyset \ar[rr]\ar[dd] \&\&
  \emptyset \ar[rr]\ar[dd] \&\&
  \emptyset \ar[rr]\ar[dd] \&\&
  \emptyset \ar[rr]\ar[dd] \&\&
  \emptyset \ar[rr,"C"]\ar[dd] \&\&
  1 \ar[dd] \\
\&  \&\&  \&\&  \&\&  \&\&  \&\&  \&\& C \& \\
\emptyset \ar[rr] \&\&
  \emptyset \ar[rr] \&\&
  \emptyset \ar[rr] \&\&
  \emptyset \ar[rr] \&\&
  \emptyset \ar[rr] \&\&
  \emptyset \ar[rr] \&\&
  \emptyset \ar[rr] \&\&
  \emptyset 
\end{tikzcd}
}{%
\begin{ytableau}
1^\circ & 2^\circ & 3 & 7^\circ \\
4 & 6 \\
5
\end{ytableau}
}{%
\begin{ytableau}
1 & 2 & 3 & 7 \\
4 & 5 \\
6
\end{ytableau}
}
\caption[Growth for mixed insertion]%
{Growth for the permutation
$(7^\circ, 5, 6, 2^\circ, 4, 1^\circ, 3)$ for mixed insertion:
(a) growth diagram, (b) $P$ tableau, and (c) $Q$ tableau}
\label{fig:growth-mixed}
\end{figure}

\paragraph{Double-circle insertion}
\label{sec:double-circle-insertion}
Haiman remarks \cite{Haim1989a}*{sec.~4}
\begin{quote}
  This makes it possible to
define left mixed insertion, and by analogy with the definition of left-right
insertion below, to define \textit{left-right mixed insertion} (on words with two
types of circles!). In such a way it is possible to get a self-dual
[i.e.~inversion self-dual] insertion
theory that generalizes Schensted, left-right, and mixed insertion. Having
no application for it in this article, we do not develop this topic here. The
interested reader is invited to work out the details for him- or
herself.
\end{quote}
Here we develop this theory, not so much because we have
an application for it but because using insertion diagrams, it is
straightforward to decide how to define the algorithm, and using the
associated software (see sec.~\ref{sec:software}) it is
straightforward to execute the algorithm on permutations.  Thus, we
get another algorithm example with little extra work.

In order to have inversion self-duality, we require that transposition of
the growth diagram creates a valid growth diagram, possibly with a
systematic change of the labels on edges.  This requires that 
both the $G_1$ and $G_2$ graphs have two labels on edges.  Thus
$w_1(p) = 2$ and $w_2(p) = 2$, implying $r = 4$.  So in regard to the
biweighting, the theory is a ``quadrupling'' of the Robinson-Schensted
theory.
There are four non-zero $\alpha$ values
\begin{enumerate}
\item represented by $UU$ in cells and $\cdot$ in generalized permutations
\item represented by $CU$ in cells and $\cdot^\circ$ in generalized permutations
\item represented by $UC$ in cells and $\cdot^\bullet$ in generalized permutations
\item represented by $CC$ in cells and $\cdot^{\circ\bullet}$ in generalized permutations
\end{enumerate}

The labels on arrows in the generic insertion diagram will be
$a_1a_2/b_1b_2$, where each of the four elements can be ``$U$'' or
``$C$''.

In order to have the new algorithm have the transpose self-duality of
left-right and mixed
insertion, that the joint transposition of the $P$ and $Q$ tableaux can
be accomplished by changing the circling of the generalized permutation,
there must be the same number of arrows ``going northeast'' as ``going
southwest'' at each point in the diagram.  In this case, each deletion
point must have two
arrows to the insertion point in the next row and two to the insertion
point in the next column.

Similarly, each insertion point has four incoming arrows.
For most insertion points, two are incoming from deletion points
``from the northeast'' and
two from deletion points ``from the southeast''.
But the northeast-most insertion point has two incoming $\alpha=\cdot$
arrows and similarly for the southwest-most insertion point.
For each insertion point, one of its incoming arrows is labeled with
each of the ``after states'', $UU$, $UC$, $CU$, and $CC$.

For simplicity, and in parallel with all of the preceding unshifted
insertion algorithms, each arrow has the same ``before state'' and
``after state''.  E.g.~we have labels like $UC/UC$ but not $UC/CU$.
The result is that the arrows can be grouped into four chains, one for
each circling combination, labeled with that combination as both
``before state'' and ``after state''.  Each chain begins at either the
southwest or northeast extreme and passes northeastward or
southwestward (resp.) through all the insertion and deletion points.

All of this reduces the number of choices to be made to $2^4$:  For each of the
chains, $UU$, $UC$, $CU$, and $CC$, its chain runs either
southwestward or northeastward.  By symmetry, one choice is arbitrary,
so we choose the $UU$ chain to run southwestward.
If we choose $UC$ or $CU$ as the other chain runing southwestward,
then the other two chains run northeastward, and either the first or
second (resp.) circling does not affect the insertion process itself,
and just passes through to label the elements in one of the result
tableaux.

Thus, the only interesting choice is to have the $UU$ and $CC$ chains
run southwestward and the $UC$ and $CC$ chains run northeastward.
This generic insertion diagram is shown in figure~\ref{fig:psi-double}.
A typical growth diagram is shown in figure~\ref{fig:growth-double}.
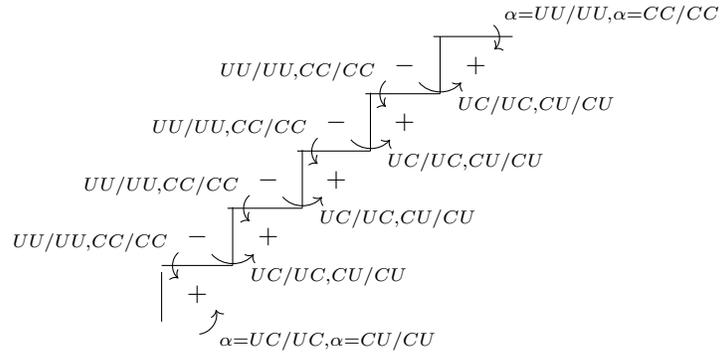
\begin{figure}
\begin{tikzcd}[cramped, sep=small, execute at end picture={
    \horizline{1}{6}
    \vertline{2}{5}
    \horizline{2}{5}
    \vertline{3}{4}
    \horizline{3}{4}
    \vertline{4}{3}
    \horizline{4}{3}
    \vertline{5}{2}
    \horizline{5}{2}
    \vertline{6}{1}
}]
\null & \null  & \null  & \null  & \null & \hskip 1em \ar[d, "{\alpha=UU/UU,\alpha=CC/CC}" near start, bend left=50] & \null \\
\null & \null  & \null  & \null & - \ar[d, "{UU/UU,CC/CC}"' near start, bend right=50] \ar[r, "{UC/UC,CU/CU}"' near end, bend right=50] & + \\
\null & \null  & \null  & - \ar[d, "{UU/UU,CC/CC}"' near start, bend right=50] \ar[r, "{UC/UC,CU/CU}"' near end, bend right=50] & + &   \\
\null & \null  & - \ar[d, "{UU/UU,CC/CC}"' near start, bend right=50] \ar[r, "{UC/UC,CU/CU}"' near end, bend right=50] & + &   &   \\
\null & - \ar[d, "{UU/UU,CC/CC}"' near start, bend right=50] \ar[r, "{UC/UC,CU/CU}"' near end, bend right=50] & + &   &   &   \\
\null & + &   &   &   &   \\
\null & \hskip 1em \ar[u, "{\alpha=UC/UC,\alpha=CU/CU}"', bend right=50] &   &   &   &
\end{tikzcd}
\caption{Generic insertion diagram for double-circle insertion}
\label{fig:psi-double}
\end{figure}
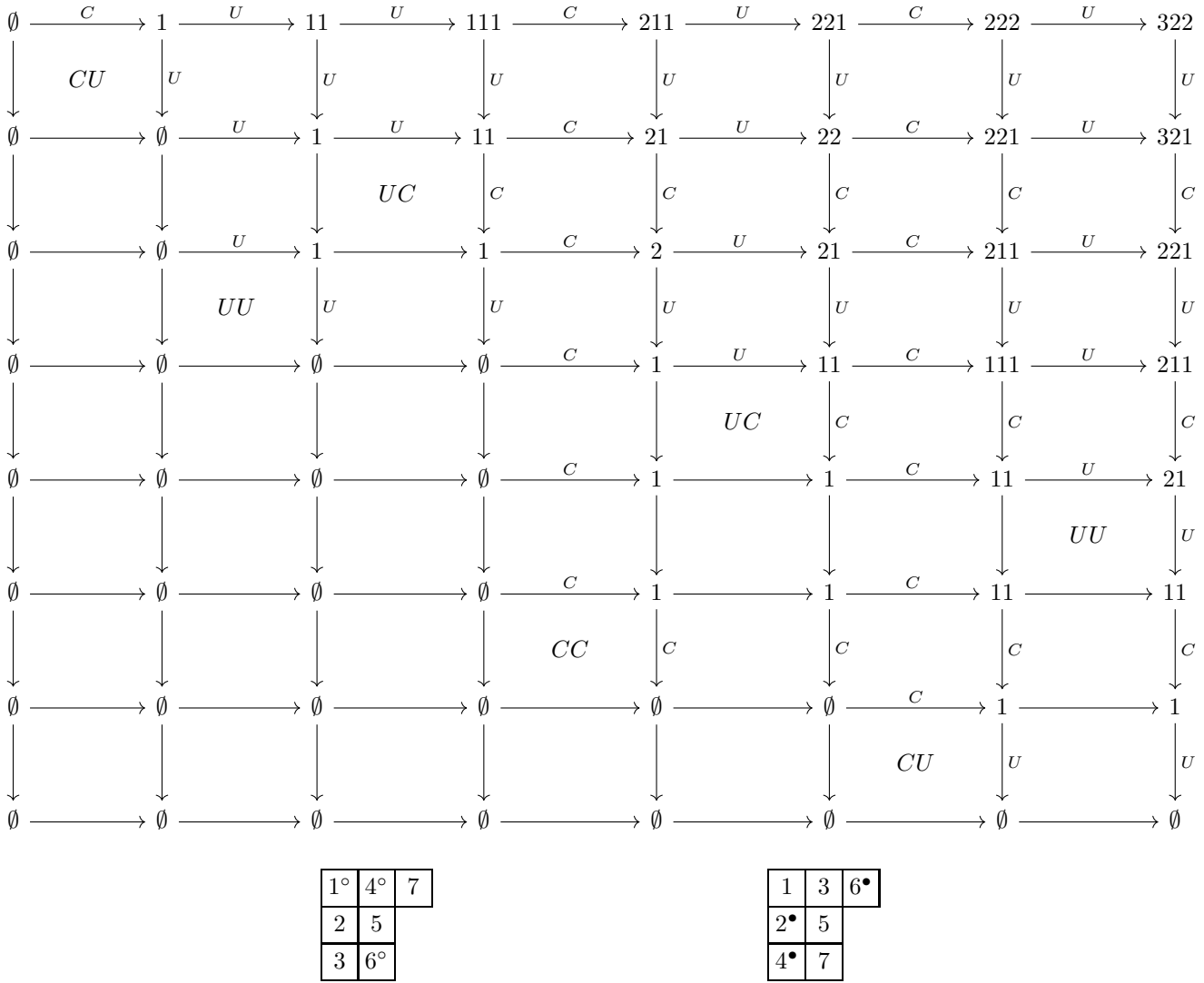
\begin{figure}
\growthtableauxv{
\begin{tikzcd}[ampersand replacement=\&,sep=small]
\emptyset \ar[rr,"C"]\ar[dd] \&\&
  1 \ar[rr,"U"]\ar[dd,"U"] \&\&
  11 \ar[rr,"U"]\ar[dd,"U"] \&\&
  111 \ar[rr,"C"]\ar[dd,"U"] \&\&
  211 \ar[rr,"U"]\ar[dd,"U"] \&\&
  221 \ar[rr,"C"]\ar[dd,"U"] \&\&
  222 \ar[rr,"U"]\ar[dd,"U"] \&\&
  322 \ar[dd,"U"] \\
\& CU \&\&  \&\&  \&\&  \&\&  \&\&  \&\&  \& \\
\emptyset \ar[rr]\ar[dd] \&\&
  \emptyset \ar[rr,"U"]\ar[dd] \&\&
  1 \ar[rr,"U"]\ar[dd] \&\&
  11 \ar[rr,"C"]\ar[dd,"C"] \&\&
  21 \ar[rr,"U"]\ar[dd,"C"] \&\&
  22 \ar[rr,"C"]\ar[dd,"C"] \&\&
  221 \ar[rr,"U"]\ar[dd,"C"] \&\&
  321 \ar[dd,"C"] \\
\&  \&\&  \&\& UC \&\&  \&\&  \&\&  \&\&  \& \\
\emptyset \ar[rr]\ar[dd] \&\&
  \emptyset \ar[rr,"U"]\ar[dd] \&\&
  1 \ar[rr]\ar[dd,"U"] \&\&
  1 \ar[rr,"C"]\ar[dd,"U"] \&\&
  2 \ar[rr,"U"]\ar[dd,"U"] \&\&
  21 \ar[rr,"C"]\ar[dd,"U"] \&\&
  211 \ar[rr,"U"]\ar[dd,"U"] \&\&
  221 \ar[dd,"U"] \\
\&  \&\& UU \&\&  \&\&  \&\&  \&\&  \&\&  \& \\
\emptyset \ar[rr]\ar[dd] \&\&
  \emptyset \ar[rr]\ar[dd] \&\&
  \emptyset \ar[rr]\ar[dd] \&\&
  \emptyset \ar[rr,"C"]\ar[dd] \&\&
  1 \ar[rr,"U"]\ar[dd] \&\&
  11 \ar[rr,"C"]\ar[dd,"C"] \&\&
  111 \ar[rr,"U"]\ar[dd,"C"] \&\&
  211 \ar[dd,"C"] \\
\&  \&\&  \&\&  \&\&  \&\& UC \&\&  \&\&  \& \\
\emptyset \ar[rr]\ar[dd] \&\&
  \emptyset \ar[rr]\ar[dd] \&\&
  \emptyset \ar[rr]\ar[dd] \&\&
  \emptyset \ar[rr,"C"]\ar[dd] \&\&
  1 \ar[rr]\ar[dd] \&\&
  1 \ar[rr,"C"]\ar[dd] \&\&
  11 \ar[rr,"U"]\ar[dd] \&\&
  21 \ar[dd,"U"] \\
\&  \&\&  \&\&  \&\&  \&\&  \&\&  \&\& UU \& \\
\emptyset \ar[rr]\ar[dd] \&\&
  \emptyset \ar[rr]\ar[dd] \&\&
  \emptyset \ar[rr]\ar[dd] \&\&
  \emptyset \ar[rr,"C"]\ar[dd] \&\&
  1 \ar[rr]\ar[dd,"C"] \&\&
  1 \ar[rr,"C"]\ar[dd,"C"] \&\&
  11 \ar[rr]\ar[dd,"C"] \&\&
  11 \ar[dd,"C"] \\
\&  \&\&  \&\&  \&\& CC \&\&  \&\&  \&\&  \& \\
\emptyset \ar[rr]\ar[dd] \&\&
  \emptyset \ar[rr]\ar[dd] \&\&
  \emptyset \ar[rr]\ar[dd] \&\&
  \emptyset \ar[rr]\ar[dd] \&\&
  \emptyset \ar[rr]\ar[dd] \&\&
  \emptyset \ar[rr,"C"]\ar[dd] \&\&
  1 \ar[rr]\ar[dd,"U"] \&\&
  1 \ar[dd,"U"] \\
\&  \&\&  \&\&  \&\&  \&\&  \&\& CU \&\&  \& \\
\emptyset \ar[rr] \&\&
  \emptyset \ar[rr] \&\&
  \emptyset \ar[rr] \&\&
  \emptyset \ar[rr] \&\&
  \emptyset \ar[rr] \&\&
  \emptyset \ar[rr] \&\&
  \emptyset \ar[rr] \&\&
  \emptyset 
\end{tikzcd}
}{
\begin{ytableau}
1^\circ & 4^\circ & 7 \\
2 & 5 \\
3 & 6^\circ
\end{ytableau}
}{
\begin{ytableau}
1 & 3 & 6^\bullet \\
2^\bullet & 5 \\
4^\bullet & 7
\end{ytableau}
}
\caption[Growth for double-circle insertion]%
{Growth for double-circle for the permutation
$(6^{\circ}, 4^{\circ\bullet}, 7, 5^{\bullet}, 2, 3^{\bullet}, 1^{\circ})$}
\label{fig:growth-double}
\end{figure}

\paragraph{Haiman's shifted mixed insertion}

Haiman defines shifted mixed insertion \cite{Haim1989a}*{sec.~3 and Def.~6.7}
\begin{quotation}
We now describe a
shifted version of mixed insertion that has a symmetry relationship
(Proposition 6.8) to unshifted mixed insertion parallel to the
relationship between Worley-Sagan insertion and left-right insertion
given by Proposition 6.2.  [\ldots]

Let $w = w_1 \cdots w_n$, be a
word without circles. Construct a sequence $T_0, T_1, \ldots, T_n = T$ of shifted
tableaux with circles: [\ldots]

Insert $w_i$ into the first row, and insert bumped letters as in mixed
insertion: circled letters, bumped, insert into the next column;
uncircled letters, into the next row, with one exception. The
exception is that an uncircled letter, bumped from a diagonal cell,
acquires a circle and inserts into the next column.
\end{quotation}

The generic insertion diagrams are shown in
figure~\ref{fig:psi-shifted-mixed}. Note that all the nontrivial
labels are for $G_1$.
\begin{figure}
\hbox to \linewidth{\hfill
\begin{tikzcd}[cramped, sep=small, execute at end picture={
    \horizlinex{1}{2}{7}
    \vertline{2}{1}
    \horizline{2}{2}
    \vertline{3}{2}
    \horizline{3}{3}
    \vertline{4}{3}
    \horizline{4}{4}
    \vertline{5}{4}
    \horizline{5}{5}
    \vertline{6}{5}
    \horizline{6}{6}
    \vertline{2}{6}
    \horizline{2}{6}
    \vertline{3}{5}
    \horizline{3}{5}
    \vertline{4}{4}
}]
\null & \null & \null  & \null  & \null  & \null & \hskip 1em \ar[d, "\alpha=1/U-" near start, bend left=50] & \null \\
\null & \null & \null  & \null  & \null & - \ar[d, bend right=50, "U-/U-"' near start] \ar[r, "C-/C-"' near end, bend right=50] & + \\
\null & \null & \null  & \null  & - \ar[d, bend right=50, "U-/U-"' near start] \ar[r, "C-/C-"' near end, bend right=50] & + &  \\
\null & \null & \null  & - \ar[r, "{--/C-}"' near end, bend right=50] & + & \null &   \\
\null & \null & \null & \null & \null & \null \\
\null & \null & \null & \null & \null & \null & \null \\
\null & \null & \null & \null & \null & \null &   \\
\null & \null & \null & \null & \null & \null &
\end{tikzcd}
\hfill
\begin{tikzcd}[cramped, sep=small, execute at end picture={
    \horizlinex{1}{2}{8}
    \vertline{2}{1}
    \horizline{2}{2}
    \vertline{3}{2}
    \horizline{3}{3}
    \vertline{4}{3}
    \horizline{4}{4}
    \vertline{5}{4}
    \horizline{5}{5}
    \vertline{6}{5}
    \horizline{6}{6}
    \vertline{2}{7}
    \horizline{2}{7}
    \vertline{3}{6}
    \horizline{3}{6}
    \vertline{4}{5}
    \horizline{4}{5}
}]
\null & \null & \null & \null  & \null  & \null  & \null & \hskip 1em \ar[d, "\alpha=1/U-" near start, bend left=50] & \null \\
\null & \null & \null & \null  & \null  & \null & - \ar[d, bend right=50, "U-/U-"' near start] \ar[r, "C-/C-"' near end, bend right=50] & + \\
\null & \null & \null & \null  & \null  & - \ar[d, bend right=50, "U-/U-"' near start] \ar[r, "C-/C-"' near end, bend right=50] & + &   \\
\null & \null & \null & \null  & - \ar[d, bend right=50, "{U-/--}"' near start] \ar[r, "C-/C-"' near end, bend right=50] & + & \null &   \\
\null & \null & \null & \null & + & \null & \null &   \\
\null & \null & \null & \null & \null & \null & \null & \null \\
\null & \null & \null & \null & \null & \null & \null &   \\
\null & \null & \null & \null & \null & \null & \null &
\end{tikzcd}
\hfill}
\begin{tikzcd}[cramped, sep=small, execute at end picture={
    \horizlinex{1}{2}{3}
    \vertline{2}{1}
    \horizline{2}{2}
    \vertline{3}{2}
    \horizline{3}{3}
}]
\null & \hskip 1em \ar[d, "\alpha=1/--" near start, bend left=50] & \null & \null \\
\null & + & \null & \null \\
\null & \null & \null & \null \\
\null & \null & \null & \null
\end{tikzcd}
\caption{Generic insertion diagrams for shifted mixed insertion}
\label{fig:psi-shifted-mixed}
\end{figure}
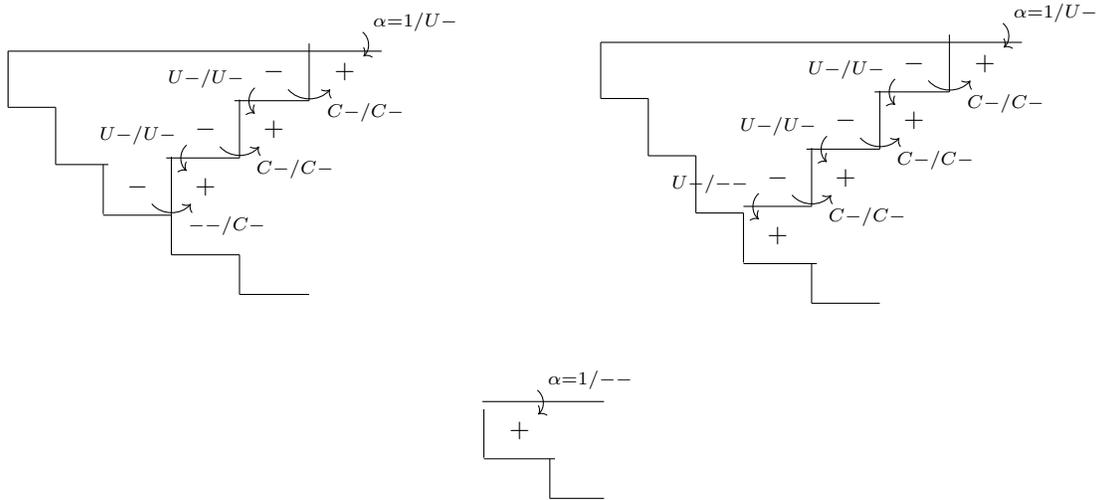

An example of the algorithm is shown in
figure~\ref{fig:growth-shifted-mixed}.  Since the permutation
$(1,2,5,4,3)$ is its own inverse, this growth is inversion-dual to the shifted
growth in figure~\ref{fig:growth-worley}.  That shifted mixed
insertion is inversion-dual to the second shifted insertion can be
seen directly
by comparing the insertion diagrams, figures~\ref{fig:psi-worley} and
\ref{fig:psi-shifted-mixed}.
\begin{figure}
\growthtableauxh{%
\begin{tikzcd}[ampersand replacement=\&,sep=small]
\emptyset \ar[rr,"-"]\ar[dd] \&\&
  1 \ar[rr,"U"]\ar[dd] \&\&
  2 \ar[rr,"U"]\ar[dd] \&\&
  3 \ar[rr,"-"]\ar[dd] \&\&
  31 \ar[rr,"C"]\ar[dd] \&\&
  32 \ar[dd] \\
\&  \&\&  \&\& X \&\&  \&\&  \& \\
\emptyset \ar[rr,"-"]\ar[dd] \&\&
  1 \ar[rr,"U"]\ar[dd] \&\&
  2 \ar[rr]\ar[dd] \&\&
  2 \ar[rr,"U"]\ar[dd] \&\&
  3 \ar[rr,"-"]\ar[dd] \&\&
  31 \ar[dd] \\
\&  \&\&  \&\&  \&\& X \&\&  \& \\
\emptyset \ar[rr,"-"]\ar[dd] \&\&
  1 \ar[rr,"U"]\ar[dd] \&\&
  2 \ar[rr]\ar[dd] \&\&
  2 \ar[rr]\ar[dd] \&\&
  2 \ar[rr,"U"]\ar[dd] \&\&
  3 \ar[dd] \\
\&  \&\&  \&\&  \&\&  \&\& X \& \\
\emptyset \ar[rr,"-"]\ar[dd] \&\&
  1 \ar[rr,"U"]\ar[dd] \&\&
  2 \ar[rr]\ar[dd] \&\&
  2 \ar[rr]\ar[dd] \&\&
  2 \ar[rr]\ar[dd] \&\&
  2 \ar[dd] \\
\&  \&\& X \&\&  \&\&  \&\&  \& \\
\emptyset \ar[rr,"-"]\ar[dd] \&\&
  1 \ar[rr]\ar[dd] \&\&
  1 \ar[rr]\ar[dd] \&\&
  1 \ar[rr]\ar[dd] \&\&
  1 \ar[rr]\ar[dd] \&\&
  1 \ar[dd] \\
\& X \&\&  \&\&  \&\&  \&\&  \& \\
\emptyset \ar[rr] \&\&
  \emptyset \ar[rr] \&\&
  \emptyset \ar[rr] \&\&
  \emptyset \ar[rr] \&\&
  \emptyset \ar[rr] \&\&
  \emptyset 
\end{tikzcd}
}{%
\begin{ytableau}
1 & 2 & 3 \\
\none & 4 & 5^\circ
\end{ytableau}
}{%
\begin{ytableau}
1 & 2 & 3 \\
\none & 4 & 5
\end{ytableau}
}
\caption[Growth for the permutation $(1,2,5,4,3)$ for the shifted mixed 
insertion]%
{Growth for the permutation $(1,2,5,4,3)$ for the shifted mixed 
insertion:
(a) growth diagram, (b) $P$ tableau, and (c) $Q$ tableau.
This is inversion-dual to \protect\ref{fig:growth-worley} because the
permutation is an involution.}
\label{fig:growth-shifted-mixed}
\end{figure}
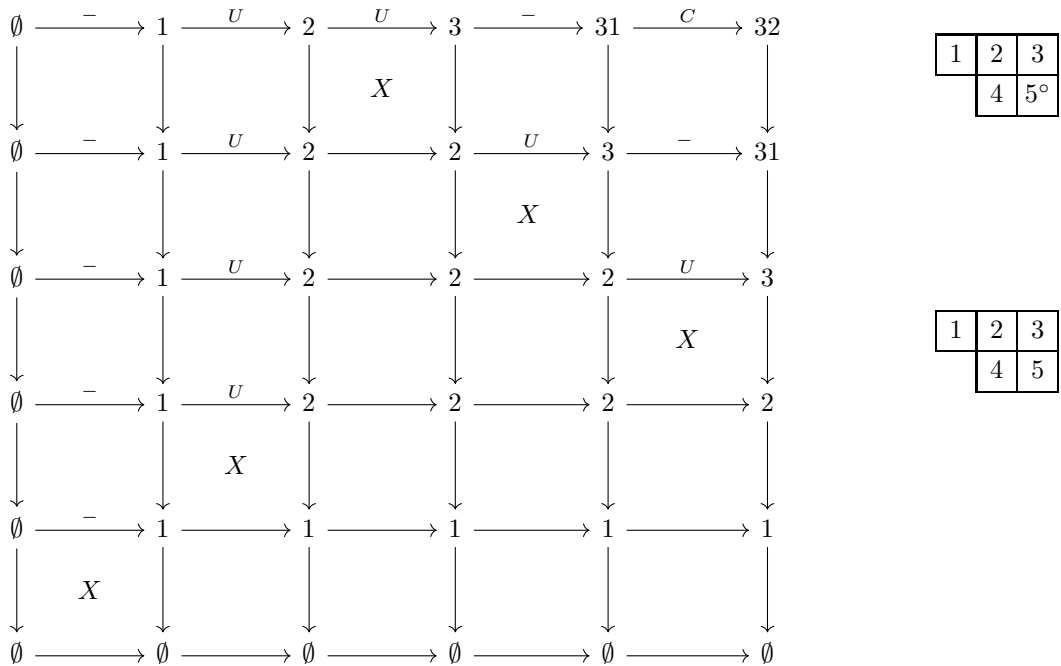

\paragraph{McLarnan's shifted column insertion}
\label{sec:shifted-column}
\cite{McLar1986}*{sec.~3, p.~49 et seq.} defines a shifted insertion algorithm with
an interesting set of properties.  The insertion diagrams for the
algorithm are figure~\ref{fig:psi-shifted-column}.
\begin{figure}
\hbox to \linewidth{\hfill
\begin{tikzcd}[cramped, sep=small, execute at end picture={
    \horizlinex{1}{2}{7}
    \vertline{2}{1}
    \horizline{2}{2}
    \vertline{3}{2}
    \horizline{3}{3}
    \vertline{4}{3}
    \horizline{4}{4}
    \vertline{5}{4}
    \horizline{5}{5}
    \vertline{6}{5}
    \horizline{6}{6}
    \vertline{2}{6}
    \horizline{2}{6}
    \vertline{3}{5}
    \horizline{3}{5}
    \vertline{4}{4}
}]
\null & \null & \null  & \null  & \null  & \null & \hskip 1em & \null \\
\null & \null & \null  & \null  & \null & - \ar[r, "{U-/U-,C-/C-}"' near end, bend right=50] & + \\
\null & \null & \null  & \null  & - \ar[r, "{U-/U-,C-/C-}"' near end, bend right=50] & + &  \\
\null & \null & \null  & - \ar[r, "U-/U-"' near end, bend right=50] & + & \raisebox{-0.5\baselineskip}{\makebox[1em][l]{\footnotesize $\alpha=1/C-$}} \ar[l] &   \\
\null & \null & \null & \null & \null & \null \\
\null & \null & \null & \null & \null & \null & \null \\
\null & \null & \null & \null & \null & \null &   \\
\null & \null & \null & \null & \null & \null &
\end{tikzcd}
\hfill
\begin{tikzcd}[cramped, sep=small, execute at end picture={
    \horizlinex{1}{2}{8}
    \vertline{2}{1}
    \horizline{2}{2}
    \vertline{3}{2}
    \horizline{3}{3}
    \vertline{4}{3}
    \horizline{4}{4}
    \vertline{5}{4}
    \horizline{5}{5}
    \vertline{6}{5}
    \horizline{6}{6}
    \vertline{2}{7}
    \horizline{2}{7}
    \vertline{3}{6}
    \horizline{3}{6}
    \vertline{4}{5}
    \horizline{4}{5}
}]
\null & \null & \null & \null  & \null  & \null  & \null & \null & \null \\
\null & \null & \null & \null  & \null  & \null & - \ar[r, "{U-/U-,C-/C-}"' near end, bend right=50] & + \\
\null & \null & \null & \null  & \null  & - \ar[r, "{U-/U-,C-/C-}"' near end, bend right=50] & + &   \\
\null & \null & \null & \null  & - \ar[r, "{U-/U-,C-/C-}"' near end, bend right=50] & + & \null &   \\
\null & \null & \null & \null & + & \raisebox{-0.5\baselineskip}{\makebox[1em][l]{\footnotesize $\alpha=1/U-$}} \ar[l] & \null & \null \\
\null & \null & \null & \null & \null & \null & \null & \null \\
\null & \null & \null & \null & \null & \null & \null &   \\
\null & \null & \null & \null & \null & \null & \null &
\end{tikzcd}
\hfill}
\begin{tikzcd}[cramped, sep=small, execute at end picture={
    \horizlinex{1}{2}{3}
    \vertline{2}{1}
    \horizline{2}{2}
    \vertline{3}{2}
    \horizline{3}{3}
}]
\null & \null & \null & \null \\
\null & + & \makebox[1em][l]{\footnotesize $\alpha=1/U-$} \ar[l] & \null \\
\null & \null & \null & \null \\
\null & \null & \null & \null
\end{tikzcd}
\caption{Generic insertion diagrams for shifted colun insertion}
\label{fig:psi-shifted-column}
\end{figure}
Expressed as an algorithm, it is straightforward:  insert the number
using column insertion,
starting in the first column into which it can be inserted,
circling the number in the $P$ tableau if it
was initially inserted into an off-diagonal cell.
An example growth diagram is shown in
figure~\ref{fig:growth-shifted-column}.
\begin{figure}
\growthtableauxh{%
\begin{tikzcd}[ampersand replacement=\&,sep=small]
\emptyset \ar[rr,"U"]\ar[dd] \&\&
  1 \ar[rr,"C"]\ar[dd] \&\&
  2 \ar[rr,"C"]\ar[dd] \&\&
  3 \ar[rr,"U"]\ar[dd] \&\&
  31 \ar[dd] \\
\&  \&\& X \&\&  \&\&  \& \\
\emptyset \ar[rr,"U"]\ar[dd] \&\&
  1 \ar[rr]\ar[dd] \&\&
  1 \ar[rr,"C"]\ar[dd] \&\&
  2 \ar[rr,"U"]\ar[dd] \&\&
  21 \ar[dd] \\
\&  \&\&  \&\&  \&\& X \& \\
\emptyset \ar[rr,"U"]\ar[dd] \&\&
  1 \ar[rr]\ar[dd] \&\&
  1 \ar[rr,"C"]\ar[dd] \&\&
  2 \ar[rr]\ar[dd] \&\&
  2 \ar[dd] \\
\&  \&\&  \&\& X \&\&  \& \\
\emptyset \ar[rr,"U"]\ar[dd] \&\&
  1 \ar[rr]\ar[dd] \&\&
  1 \ar[rr]\ar[dd] \&\&
  1 \ar[rr]\ar[dd] \&\&
  1 \ar[dd] \\
\& X \&\&  \&\&  \&\&  \& \\
\emptyset \ar[rr] \&\&
  \emptyset \ar[rr] \&\&
  \emptyset \ar[rr] \&\&
  \emptyset \ar[rr] \&\&
  \emptyset 
\end{tikzcd}
}{%
\begin{ytableau}
1 & 2^\circ & 3^\circ \\
\none & 4
\end{ytableau}
}{%
\begin{ytableau}
1 & 2 & 4 \\
\none & 3
\end{ytableau}
}
\caption[Growth for the permutation for McLarnan's shifted colun
insertion]%
{Growth for the permutation $(1, 3, 4, 2)$ for  McLarnan's shifted colun
insertion:
(a) growth diagram, (b) $P$ tableau, and (c) $Q$ tableau}
\label{fig:growth-shifted-column}
\end{figure}

McLarnan constructed the algorithm to be ``symmetric'', that is as
close as possible to being inversion self-dual.
Inversion self-duality can't be achieved, because shifted tableau do not admit the
weight $w(x) = 1$ and so the multiplicity of the $G_1$ edges and
$G_2$ edges cannot be the same.
This prevents the $P$ and $Q$ tableau from being interchanged in the
strict sense.
This is apparent in the generic insertion diagrams and the example
growth diagram.

However, inverting the permutation does swap the $P$ and $Q$ tableaux
as long as the circling is ignored.  Moreover, the circles in the
permutation's $P$ tableau are effectively moved to the elements in the
inverse's $P$ tableau (the permutation's $Q$ tableau) that are the
corresponding elements in the inverse permutation.
For example, the inverse of the permutation in
figure~\ref{fig:growth-shifted-column} is shown in
figure~\ref{fig:growth-shifted-column-1}.
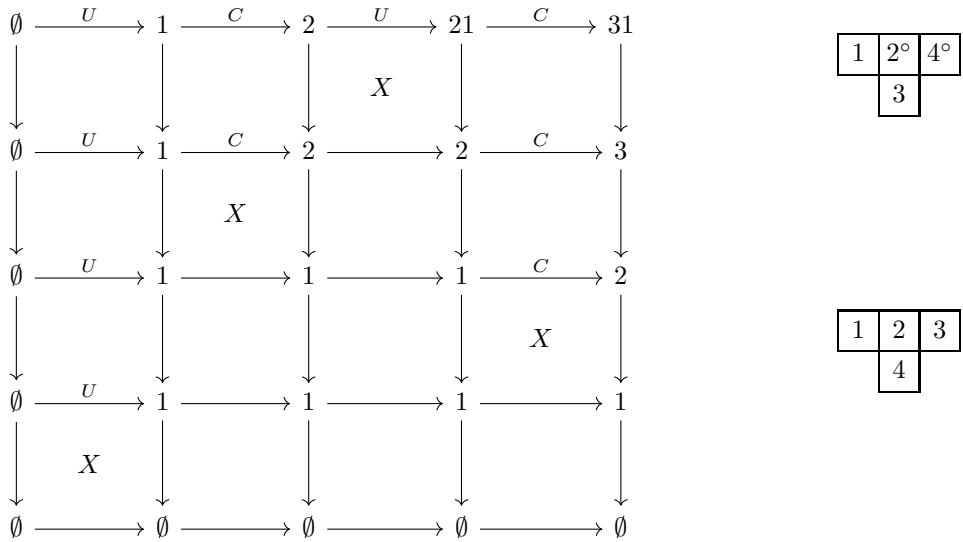
\begin{figure}
\growthtableauxh{%
\begin{tikzcd}[ampersand replacement=\&,sep=small]
\emptyset \ar[rr,"U"]\ar[dd] \&\&
  1 \ar[rr,"C"]\ar[dd] \&\&
  2 \ar[rr,"U"]\ar[dd] \&\&
  21 \ar[rr,"C"]\ar[dd] \&\&
  31 \ar[dd] \\
\&  \&\&  \&\& X \&\&  \& \\
\emptyset \ar[rr,"U"]\ar[dd] \&\&
  1 \ar[rr,"C"]\ar[dd] \&\&
  2 \ar[rr]\ar[dd] \&\&
  2 \ar[rr,"C"]\ar[dd] \&\&
  3 \ar[dd] \\
\&  \&\& X \&\&  \&\&  \& \\
\emptyset \ar[rr,"U"]\ar[dd] \&\&
  1 \ar[rr]\ar[dd] \&\&
  1 \ar[rr]\ar[dd] \&\&
  1 \ar[rr,"C"]\ar[dd] \&\&
  2 \ar[dd] \\
\&  \&\&  \&\&  \&\& X \& \\
\emptyset \ar[rr,"U"]\ar[dd] \&\&
  1 \ar[rr]\ar[dd] \&\&
  1 \ar[rr]\ar[dd] \&\&
  1 \ar[rr]\ar[dd] \&\&
  1 \ar[dd] \\
\& X \&\&  \&\&  \&\&  \& \\
\emptyset \ar[rr] \&\&
  \emptyset \ar[rr] \&\&
  \emptyset \ar[rr] \&\&
  \emptyset \ar[rr] \&\&
  \emptyset 
\end{tikzcd}
}{%
\begin{ytableau}
1 & 2^\circ & 4^\circ \\
\none & 3
\end{ytableau}
}{%
\begin{ytableau}
1 & 2 & 3 \\
\none & 4
\end{ytableau}
}
\caption[Growth for the permutation for McLarnan's shifted colun
insertion]%
{Growth for the permutation $(1, 4, 2, 3)$ for  McLarnan's shifted colun
insertion, which is the inverse of the permutation in
\protect\ref{fig:growth-shifted-column}}
\label{fig:growth-shifted-column-1}
\end{figure}

McLarnan arranged for this inversion near-duality by arranging
that the movement of an element upon bumping is not dependent on its
circled status.  In the context of a growth diagram, the labels on the
$G_1$ edges pass northward through the diagram, neither being changed
by more northern cells nor affecting the nodes of more northern cells.

\paragraph{Dual shifted column insertion}
For our purposes, the inversion-dual of the shifted colun algorithm, which
we call \textit{dual shifted column insertion}, is more interesting.  Indeed,
we believe that it deserves more study in the theory of tableau
insertion algorithms than it has received.

Because shifted colun insertion is nearly inversion self-dual, the only
difference from shifted colun insertion is that labels are
attached to the $G_2$ edges rather than the $G_1$ edges, and so
circles appear on the elements of the $Q$ tableau.

See figure~\ref{fig:psi-dual-shifted-column} for the generic insertion
diagrams, and figure~\ref{fig:growth-dual-shifted-column} for an example
growth diagram.
\begin{figure}
\hbox to \linewidth{\hfill
\begin{tikzcd}[cramped, sep=small, execute at end picture={
    \horizlinex{1}{2}{7}
    \vertline{2}{1}
    \horizline{2}{2}
    \vertline{3}{2}
    \horizline{3}{3}
    \vertline{4}{3}
    \horizline{4}{4}
    \vertline{5}{4}
    \horizline{5}{5}
    \vertline{6}{5}
    \horizline{6}{6}
    \vertline{2}{6}
    \horizline{2}{6}
    \vertline{3}{5}
    \horizline{3}{5}
    \vertline{4}{4}
}]
\null & \null & \null  & \null  & \null  & \null & \hskip 1em & \null \\
\null & \null & \null  & \null  & \null & - \ar[r, "{-U/-U,-C/-C}"' near end, bend right=50] & + \\
\null & \null & \null  & \null  & - \ar[r, "{-U/-U,-C/-C}"' near end, bend right=50] & + &  \\
\null & \null & \null  & - \ar[r, "-U/-U"' near end, bend right=50] & + & \raisebox{-0.5\baselineskip}{\makebox[1em][l]{\footnotesize $\alpha=1/-C$}} \ar[l] &   \\
\null & \null & \null & \null & \null & \null \\
\null & \null & \null & \null & \null & \null & \null \\
\null & \null & \null & \null & \null & \null &   \\
\null & \null & \null & \null & \null & \null &
\end{tikzcd}
\hfill
\begin{tikzcd}[cramped, sep=small, execute at end picture={
    \horizlinex{1}{2}{8}
    \vertline{2}{1}
    \horizline{2}{2}
    \vertline{3}{2}
    \horizline{3}{3}
    \vertline{4}{3}
    \horizline{4}{4}
    \vertline{5}{4}
    \horizline{5}{5}
    \vertline{6}{5}
    \horizline{6}{6}
    \vertline{2}{7}
    \horizline{2}{7}
    \vertline{3}{6}
    \horizline{3}{6}
    \vertline{4}{5}
    \horizline{4}{5}
}]
\null & \null & \null & \null  & \null  & \null  & \null & \null & \null \\
\null & \null & \null & \null  & \null  & \null & - \ar[r, "{-U/-U,-C/-C}"' near end, bend right=50] & + \\
\null & \null & \null & \null  & \null  & - \ar[r, "{-U/-U,-C/-C}"' near end, bend right=50] & + &   \\
\null & \null & \null & \null  & - \ar[r, "{-U/-U,-C/-C}"' near end, bend right=50] & + & \null &   \\
\null & \null & \null & \null & + & \raisebox{-0.5\baselineskip}{\makebox[1em][l]{\footnotesize $\alpha=1/-U$}} \ar[l] & \null & \null \\
\null & \null & \null & \null & \null & \null & \null & \null \\
\null & \null & \null & \null & \null & \null & \null &   \\
\null & \null & \null & \null & \null & \null & \null &
\end{tikzcd}
\hfill}
\begin{tikzcd}[cramped, sep=small, execute at end picture={
    \horizlinex{1}{2}{3}
    \vertline{2}{1}
    \horizline{2}{2}
    \vertline{3}{2}
    \horizline{3}{3}
}]
\null & \null & \null & \null \\
\null & + & \makebox[1em][l]{\footnotesize $\alpha=1/-U$} \ar[l] & \null \\
\null & \null & \null & \null \\
\null & \null & \null & \null
\end{tikzcd}
\caption{Generic insertion diagram for dual shifted column insertion}
\label{fig:psi-dual-shifted-column}
\end{figure}
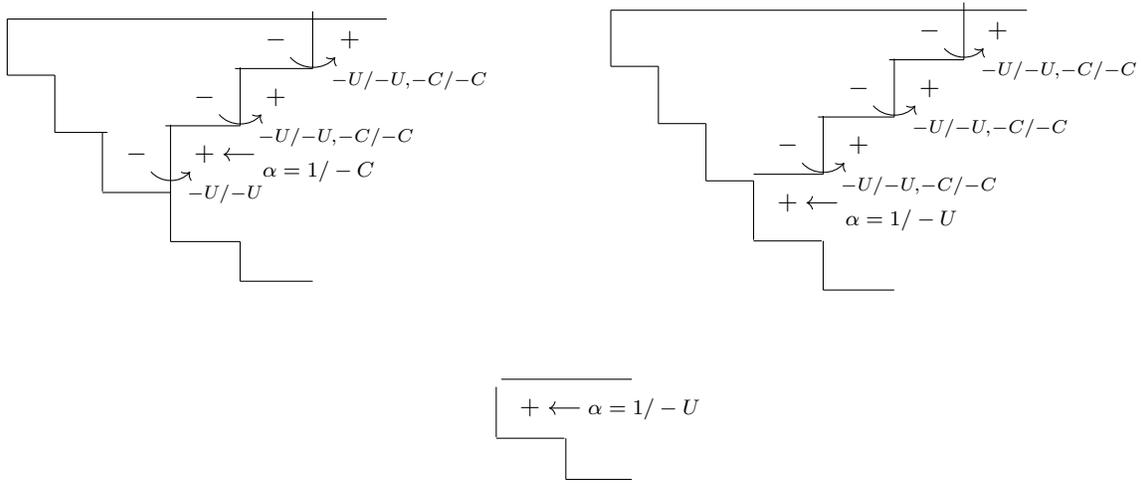
\begin{figure}
\growthtableauxh{%
\begin{tikzcd}[ampersand replacement=\&,sep=small]
\emptyset \ar[rr]\ar[dd] \&\&
  1 \ar[rr]\ar[dd] \&\&
  2 \ar[rr]\ar[dd,"C"] \&\&
  3 \ar[rr]\ar[dd,"C"] \&\&
  31 \ar[dd,"C"] \\
\&  \&\& X \&\&  \&\&  \& \\
\emptyset \ar[rr]\ar[dd] \&\&
  1 \ar[rr]\ar[dd] \&\&
  1 \ar[rr]\ar[dd] \&\&
  2 \ar[rr]\ar[dd] \&\&
  21 \ar[dd,"U"] \\
\&  \&\&  \&\&  \&\& X \& \\
\emptyset \ar[rr]\ar[dd] \&\&
  1 \ar[rr]\ar[dd] \&\&
  1 \ar[rr]\ar[dd] \&\&
  2 \ar[rr]\ar[dd,"C"] \&\&
  2 \ar[dd,"C"] \\
\&  \&\&  \&\& X \&\&  \& \\
\emptyset \ar[rr]\ar[dd] \&\&
  1 \ar[rr]\ar[dd,"U"] \&\&
  1 \ar[rr]\ar[dd,"U"] \&\&
  1 \ar[rr]\ar[dd,"U"] \&\&
  1 \ar[dd,"U"] \\
\& X \&\&  \&\&  \&\&  \& \\
\emptyset \ar[rr] \&\&
  \emptyset \ar[rr] \&\&
  \emptyset \ar[rr] \&\&
  \emptyset \ar[rr] \&\&
  \emptyset 
\end{tikzcd}
}{%
\begin{ytableau}
1 & 2 & 3 \\
\none & 4
\end{ytableau}
}{%
\begin{ytableau}
1 & 2^\circ & 4^\circ \\
\none & 3
\end{ytableau}
}
\caption[Growth for the permutation $(1, 3, 4, 2)$ for dual shifted
  column insertion]%
{Growth for the permutation $(1, 3, 4, 2)$ for dual shifted column insertion:
(a) growth diagram, (b) $P$ tableau, and (c) $Q$ tableau}
\label{fig:growth-dual-shifted-column}
\end{figure}
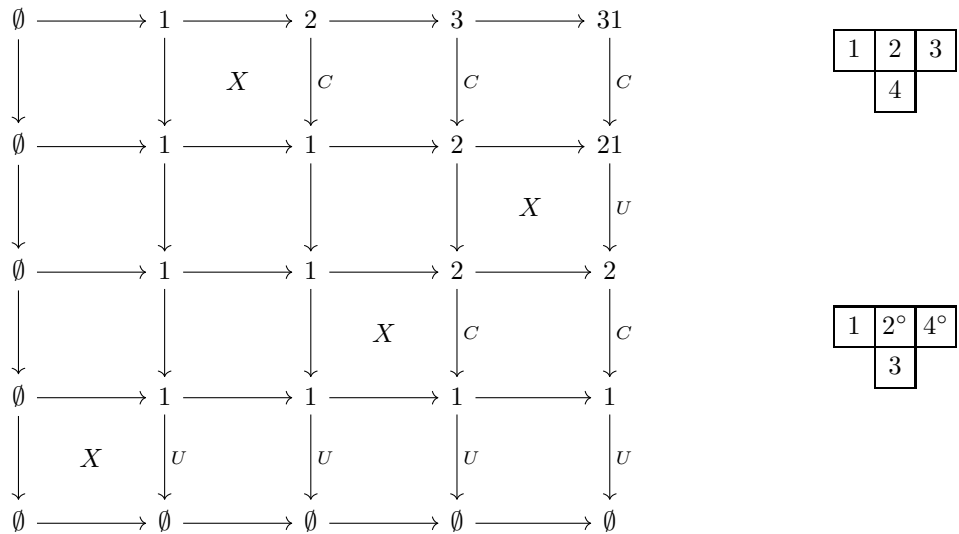

This algorithm is close in style to the first \ref{sec:sagan} and
second \ref{sec:shifted} shifted algorithms.  It is simpler and more
intuitive than either of them, and is
nearly inversion self-dual (in the same way as shifted column insertion).
And since it only applies labels to $G_2$ edges, it is compatible with
the original formulation of growth diagrams \ref{sec:growth-diagrams}.

It is a matter of interest why such a straightforward variant of a
described algorithm has apparently not been described before
now.  That appears to be due to the general neglect of 
McLarnan's shifted colun insertion.  One driver of that may be
that because shifted colun insertion puts circles on
elements of $P$ rather than $Q$ it is less
congruent with other early algorithms for shifted tableaux, which put
circles only on elements of $Q$, and consequently it is not compatible with
the original formulation of growth diagrams.  Putting circles on
elements of $P$ is natural in McLarnan's approach, but not so putting
circles on elements of $Q$.

Early work on other shifted algorithms tended to put circles on
elements of $Q$ as that allowed circles on elements of the generalized
permutation, which then propagate to the elements of $P$.

Conversely, Sagan's approaches via the Knuth relations
\cite{Sag1979b} and the ``lifting'' operation \cite{Sag1987a} and
Worley's approach via jeu de taquin \cite{Wor1984a} do not seem to
naturally lead to this algorithm.

Another factor is likely that Sagan and Worley both started searching
for algorithms based on row insertion, as that is the most
commonly-used unshifted algorithm, and both succeeded at that.  Neither
seems to have considered starting with column insertion.  In the
context of unshifted tableaux, row and column insertion are simply
transpose-dual, which suggests that attacking the problem of shifted
tableaux starting with either row or column insertion will give the
same results.  But with shifted tableaux row and column insertion are
distinctly different.

\section{Software}\label{sec:software}

There are two ancillary software files:
\begin{enumerate}
  \item[] \texttt{growth.py} -- (Python) A large number of classes containing
    machinery for executing tableau insertion algorithms.
  \item[] \texttt{growth-driver.py} -- (Python) A driver program that uses
    the machinery in \texttt{growth.py} for several dozen tests/examples,
    including many examples in this paper.
\end{enumerate}

\section*{References}

\begin{biblist}[\normalsize]*{labels={alphabetic}}


\DefineSimpleKey{bib}{identifier}{}
\DefineSimpleKey{bib}{location}{}
\DefineSimpleKey{bib}{primaryclass}{}
\gdef\Zbl#1{\relax\ifhmode\unskip\spacefactor3000 \space\fi
  \href{https://zbmath.org/#1}{Zbl~#1}}
\gdef\GS#1{\relax\ifhmode\unskip\spacefactor3000 \space\fi
  \href{https://scholar.google.com/scholar?cluster=#1}{GS~#1}}

\BibSpec{arXiv}{%
    +{}{\PrintAuthors}                  {author}
    +{,}{ \textit}                      {title}
    +{,} { \PrintTranslatorsC}          {translator}
    +{}{ \parenthesize}                 {date}
    +{,}{ arXiv }                       {identifier}
    +{,}{ primary class }               {primaryclass}
    +{,} { \PrintDOI}                   {doi}
    +{,} { available at \eprint}        {eprint}
    +{.} { }                            {note}
}

\BibSpec{article}{%
    +{}  {\PrintAuthors}                {author}
    +{,} { \textit}                     {title}
    +{.} { }                            {part}
    +{:} { \textit}                     {subtitle}
    +{,} { \PrintTranslatorsC}          {translator}
    +{,} { \PrintContributions}         {contribution}
    +{.} { \PrintPartials}              {partial}
    +{,} { }                            {journal}
    +{}  { \textbf}                     {volume}
    +{}  { \PrintDatePV}                {date}
    +{,} { \issuetext}                  {number}
    +{,} { \eprintpages}                {pages}
    +{,} { }                            {status}
    +{,} { \PrintDOI}                   {doi}
    +{,} { available at \eprint}        {eprint}
    +{}  { \parenthesize}               {language}
    +{}  { \PrintTranslation}           {translation}
    +{;} { \PrintReprint}               {reprint}
    +{.} { }                            {note}
    +{.} {}                             {transition}
    +{}  {\SentenceSpace \PrintReviews} {review}
}

\BibSpec{partial}{%
    +{}  {}                             {part}
    +{:} { \textit}                     {subtitle}
    +{,} { \PrintContributions}         {contribution}
    +{,} { }                            {journal}
    +{}  { \textbf}                     {volume}
    +{}  { \PrintDatePV}                {date}
    +{,} { \issuetext}                  {number}
    +{,} { \eprintpages}                {pages}
    +{,} { \PrintDOI}                   {doi}
    +{,} { available at \eprint}        {eprint}
    +{.} { }                            {note}
}

\BibSpec{presentation}{%
    +{}{\PrintAuthors}                  {author}
    +{,}{ \textit}                      {title}
    +{,}{ }                             {date}
    +{,}{ }                             {location}
    +{,}{ }                             {series}
    +{,} { \PrintDOI}                   {doi}
    +{,} { available at \eprint}        {eprint}
    +{.} { }                            {note}
}

\bib*{xref-BogFreesKung1990}{book}{
  title={The Dilworth theorems: Selected papers of Robert P.\ Dilworth},
  editor={Bogard, Kenneth P.},
  editor={Freese, Ralph S.},
  editor={Kung, Joseph P.\ S.},
  date={1990},
  publisher={Springer},
  address={New York},
  series={Contemporary Mathematicians},
  doi={10.1007/978-1-4899-3558-8},
}

\bib*{xref-Stan1999}{book}{
  title={Enumerative Combinatorics, Volume 2},
  author={Stanley, Richard P.},
  date={1999},
  publisher={Cambridge University Press},
  address={Cambridge},
  series={Cambridge Studies in Advanced Mathematics},
  volume={62},
}

\bib*{xref-Stant1990}{book}{
  title={Invariant Theory and Tableaux},
  editor={Stanton, Dennis},
  publisher={Springer-Verlag},
  series={IMA Volumes in Math. and Its Appls.},
  volume={19},
  address={Berlin and New York},
  date={1990},
}

\bib{Fom1994a}{article}{
  label={Fom1994a},
  author={Fomin, Sergey V.},
  title={Duality of Graded Graphs},
  journal={Journal of Algebraic Combinatorics},
  volume={3},
  date={1994},
  pages={357--404},
  review={\Zbl {0810.05005}},
  doi={10.1023/A:1022412010826},
  eprint={https://link.springer.com/content/pdf/10.1023/A:1022412010826.pdf},
  note={\GS {3401296478290474488}},
}

\bib{Fom1995a}{article}{
  label={Fom1995a},
  author={Fomin, Sergey V.},
  title={Schensted Algorithms for Dual Graded Graphs},
  journal={Journal of Algebraic Combinatorics},
  volume={4},
  date={1995},
  pages={5--45},
  review={\Zbl {0817.05077}},
  doi={10.1023/A:1022404807578},
  eprint={https://link.springer.com/content/pdf/10.1023/A:1022404807578.pdf},
  note={\GS {9003315695694762360}},
}

\bib{GarMcLar1987}{arXiv}{
  label={GarMcLar1987},
  author={Garsia, Adriano M.},
  author={McLarnan, Timothy J.},
  title={Robinson-Schensted Algorithms Obtained from Tableau Recursions},
  date={1987},
  identifier={2201.12908},
  primaryclass={math.CO},
  doi={10.48550/arXiv.2201.12908},
  eprint={https://arxiv.org/abs/2201.12908},
  note={\GS {446442338380191517}},
}

\bib{Haim1989a}{article}{
  label={Haim1989a},
  author={Haiman, Mark D.},
  title={On mixed insertion, symmetry, and shifted Young tableaux},
  journal={J. Combin. Theory},
  volume={Ser.\ A 50},
  date={1989},
  pages={196--225},
  doi={10.1016/0097-3165(89)90015-0},
  eprint={https://www.sciencedirect.com/science/article/pii/0097316589900150},
  note={\GS {14350300647709625382}},
}

\bib{Lam2008}{article}{
  label={Lam2008},
  author={Lam, Thomas F.},
  title={Signed differential posets and sign-imbalance},
  journal={J. Combin. Theory},
  volume={Ser.\ A 115},
  date={2008},
  pages={466--484},
  doi={10.1016/j.jcta.2007.07.003},
  eprint={https://www.sciencedirect.com/science/article/pii/S0097316507000957},
  note={\GS {7312628064545996659}},
}

\bib{McLar1986}{thesis}{
  label={McLar1986},
  author={McLarnan, Timothy J.},
  title={Tableau Recursions and Symmetric Schensted Correspondences for Ordinary, Shifted and Oscillating Tableaux},
  school={U. C. San Diego},
  year={1986},
  type={Ph.D.\ thesis},
  note={\GS {6563465974933598796}},
}

\bib{Mooers1959}{article}{
  label={Mooers1959},
  author={Mooers, Calvin N.},
  title={The Next Twenty Years in Information Retrieval: Some Goals and Predictions},
  pages={81--86},
  conference={ title = {Proceedings of the Western Joint Computer Conference}, address = {San Francisco}, date = {1959}, },
  doi={10.1145/1457838.1457853},
  eprint={https://dl.acm.org/doi/pdf/10.1145/1457838.1457853},
  note={\GS {6065334782042092209}},
}

\bib{Roby1991a}{thesis}{
  label={Roby1991a},
  author={Roby, Tom W.},
  title={Applications and extensions of Fomin's generalization of the Robinson-Schensted correspondence to differential posets},
  school={Massachusetts Inst.\ of Tech.},
  year={1991},
  type={Ph.D.\ thesis},
  eprint={https://dspace.mit.edu/handle/1721.1/13517},
  note={\GS {9893116474914970883}},
}

\bib{Sag1979b}{article}{
  label={Sag1979b},
  author={Sagan, Bruce E.},
  title={An analog of Schensted's algorithm for shifted Young tableaux},
  journal={J. Combin. Theory},
  volume={Ser.\ A 27},
  date={1979},
  pages={10--18},
  review={\MR {80k 05029}},
  eprint={https://users.math.msu.edu/users/bsagan/Papers/Old/asa-pub.pdf},
  doi={10.1016/B978-0-12-428780-8.50007-6},
  note={\GS {15694654829594027728}},
}

\bib{Sag1987a}{article}{
  label={Sag1987a},
  author={Sagan, Bruce E.},
  title={Shifted tableaux, Schur Q-functions, and a conjecture of R.\ P.\ Stanley},
  journal={J. Combin. Theory},
  volume={Ser.\ A 45},
  date={1987},
  pages={62--103},
  doi={10.1016/0097-3165(87)90047-1},
  eprint={https://users.math.msu.edu/users/bsagan/Papers/Old/sts-pub.pdf},
  note={\GS {17936398118720156955}},
}

\bib{Schen1961}{article}{
  label={Schen1961},
  author={Schensted, Craige},
  title={Longest increasing and decreasing sequences},
  journal={Canad. J. Math.},
  volume={13},
  date={1961},
  pages={179--191},
  eprint={https://www.cambridge.org/core/services/aop-cambridge-core/content/view/B5098D9BC8B226C575402B971852C05E/S0008414X00013146a.pdf/longest-increasing-and-decreasing-subsequences.pdf},
  doi={10.4153/CJM-1961-015-3},
}

\bib{Stan1988a}{article}{
  label={Stan1988a},
  author={Stanley, Richard P.},
  title={Differential Posets},
  journal={J. Amer. Math. Soc.},
  volume={1},
  date={1988},
  pages={919--961},
  doi={10.2307/1990995},
  eprint={https://www.jstor.org/stable/1990995},
  note={\GS {5318386056862341375}},
}

\bib{Stan1990a}{article}{
  label={Stan1990a},
  author={Stanley, Richard P.},
  title={Variations on differential posets},
  date={1990},
  book={ xref = {xref-Stant1990}, },
  pages={145--165},
  eprint={https://math.mit.edu/~rstan/pubs/pubfiles/78.pdf},
  note={\GS {2941535162033905939}},
}

\bib{Stan2012a}{book}{
  label={Stan2012a},
  title={Enumerative Combinatorics, Volume 1},
  edition={2},
  author={Stanley, Richard P.},
  date={1997, 2012},
  publisher={Cambridge University Press},
  address={Cambridge},
  series={Cambridge Studies in Advanced Mathematics},
  volume={49},
  note={original edition 1997.},
}

\bib{Wor1984a}{thesis}{
  label={Wor1984a},
  author={Worley, Dale R.},
  title={A Theory of Shifted Young Tableaux},
  school={Massachusetts Inst.\ of Tech.},
  year={1984},
  type={Ph.D.\ thesis},
  eprint={https://dspace.mit.edu/handle/1721.1/15599},
  note={\GS {7106851617217394040}},
}

\end{biblist}

\vspace{3em}

\end{document}